\DeclareMathAlphabet{\mathcal}{OMS}{cmsy}{m}{n}
\def\ps@pprintTitle{%
 \let\@oddhead\@empty
 \let\@evenhead\@empty
 \def\@oddfoot{\centerline{\thepage}}%
 \let\@evenfoot\@oddfoot}
\newcommand{\bbC}{\mathbb{C}}
\newcommand{\bbF}{\mathbb{F}}
\newcommand{\bbH}{\mathbb{H}}
\newcommand{\bbR}{\mathbb{R}}
\newcommand{\bbT}{\mathbb{T}}
\newcommand{\bbZ}{\mathbb{Z}}
\newcommand{\bfA}{\mathbf{A}}
\newcommand{\bfB}{\mathbf{B}}
\newcommand{\bfI}{\mathbf{I}}
\newcommand{\bfJ}{\mathbf{J}}
\newcommand{\bfP}{\mathbf{P}}
\newcommand{\bfU}{\mathbf{U}}
\newcommand{\bfV}{\mathbf{V}}
\newcommand{\bfW}{\mathbf{W}}
\newcommand{\bfx}{\mathbf{x}}
\newcommand{\bfX}{\mathbf{X}}
\newcommand{\bfy}{\mathbf{y}}
\newcommand{\bfzero}{\boldsymbol{0}}
\newcommand{\bfdelta}{\boldsymbol{\delta}}
\newcommand{\bfGamma}{\boldsymbol{\Gamma}}
\newcommand{\bfchi}{\boldsymbol{\chi}}
\newcommand{\bfphi}{\boldsymbol{\varphi}}
\newcommand{\bfPhi}{\boldsymbol{\Phi}}
\newcommand{\bfpsi}{\boldsymbol{\psi}}
\newcommand{\bfPsi}{\boldsymbol{\Psi}}
\newcommand{\bfxi}{\boldsymbol{\xi}}
\newcommand{\bfXi}{\boldsymbol{\Xi}}
\newcommand{\calB}{\mathcal{B}}
\newcommand{\calD}{\mathcal{D}}
\newcommand{\calG}{\mathcal{G}}
\newcommand{\calH}{\mathcal{H}}
\newcommand{\calK}{\mathcal{K}}
\newcommand{\calN}{\mathcal{N}}
\newcommand{\calR}{\mathcal{R}}
\newcommand{\calS}{\mathcal{S}}
\newcommand{\calU}{\mathcal{U}}
\newcommand{\calV}{\mathcal{V}}
\newcommand{\rmc}{\mathrm{c}}
\newcommand{\rmi}{\mathrm{i}}
\newcommand{\rmj}{\mathrm{j}}
\newcommand{\rmk}{\mathrm{k}}
\newcommand{\rmN}{\mathrm{N}}
\newcommand{\rms}{\mathrm{s}}
\newcommand{\rmT}{\mathrm{T}}
\newcommand{\Tr}{\operatorname{Tr}}
\newcommand{\Orb}{{\operatorname{Orb}}}
\newcommand{\dist}{\operatorname{dist}}
\newcommand{\rank}{\operatorname{rank}}
\newcommand{\BIBD}{{\operatorname{BIBD}}}
\newcommand{\Span}{\operatorname{span}}
\newcommand{\real}{\operatorname{Re}}
\newcommand{\DF}{{\operatorname{DF}}}
\newcommand{\DS}{{\operatorname{DS}}}
\newcommand{\DDS}{{\operatorname{DDS}}}
\newcommand{\TFF}{{\operatorname{TFF}}}
\newcommand{\ETF}{{\operatorname{ETF}}}
\newcommand{\ECTFF}{{\operatorname{ECTFF}}}
\newcommand{\EITFF}{{\operatorname{EITFF}}}
\newcommand{\Fro}{\mathrm{Fro}}
\newcommand{\abs}[1]{|{#1}|}
\newcommand{\biggparen}[1]{\biggl({#1}\biggr)}
\newcommand{\bigbracket}[1]{\bigl[{#1}\bigr]}
\newcommand{\biggbracket}[1]{\biggl[{#1}\biggr]}
\newcommand{\set}[1]{\{{#1}\}}
\newcommand{\norm}[1]{\|{#1}\|}
\newcommand{\ip}[2]{\langle{#1},{#2}\rangle}
\newtheorem{theorem}{Theorem}[section]
\newtheorem{corollary}[theorem]{Corollary}
\theoremstyle{definition}
\newtheorem{definition}[theorem]{Definition}
\begin{document}
\begin{frontmatter}
\title{Certifying the novelty of equichordal tight fusion frames}

\author{Matthew Fickus}
\ead{Matthew.Fickus@gmail.com}
\author{Benjamin R.~Mayo}
\author{Cody E.~Watson}

\address{Department of Mathematics and Statistics, Air Force Institute of Technology, Wright-Patterson AFB, OH 45433}

\begin{abstract}
An equichordal tight fusion frame (ECTFF) is a finite sequence of equi-dimensional subspaces of a finite-dimensional Hilbert space that achieves equality in Conway, Hardin and Sloane's simplex bound.
Every ECTFF is a type of optimal Grassmannian code,
being a way to arrange a given number of members of a Grassmannian so that the minimal chordal distance between any pair of them is as large as possible.
Any nontrivial ECTFF has both a Naimark complement and spatial complement which themselves are ECTFFs.
It turns out that whenever the number of subspaces is at least five,
taking iterated alternating Naimark and spatial complements of one ECTFF yields an infinite family of them with distinct parameters.
This makes it challenging to certify the novelty of any recently discovered ECTFF:
how can one guarantee that it does not arise from any previously known construction in such a Naimark-spatial way?
In this paper, we propose a solution to this problem,
showing that any ECTFF is a member of a Naimark-spatial family originating from either a trivial ECTFF or one with unique "minimal" parameters.
In the latter case, if its minimal parameters do not match those of any previously known ECTFF, it is certifiably new.
As a proof of concept,
we then use these ideas to certify the novelty of some ECTFFs arising from a new method for constructing them from difference families for finite abelian groups.
This method properly generalizes King's construction of ECTFFs from semiregular divisible difference sets.
\end{abstract}

\begin{keyword}
simplex bound \sep tight fusion frame \sep equichordal \sep equi-isoclinic
\MSC[2020] 42C15
\end{keyword}
\end{frontmatter}

\section{Introduction}

Let $\calH$ be a $D$-dimensional Hilbert space over the field $\bbF$ which is either $\bbR$ or $\bbC$.
The \textit{chordal distance}~\cite{ConwayHS96} and \textit{spectral distance}~\cite{DhillonHST08} between two $R$-dimensional subspaces $\calU_1$ and $\calU_2$ of $\calH$ are defined, in terms of their corresponding rank-$R$ projections $\bfP_1$ and $\bfP_2$, as
\begin{equation}
\label{eq.chordal and spectral distance}
\dist_{\rmc}(\calU_1,\calU_2)
:=\tfrac1{\sqrt{2}}\norm{\bfP_1-\bfP_2}_\Fro,
\quad
\dist_{\rms}(\calU_1,\calU_2):=\sqrt{1-\norm{\bfP_1\bfP_2}_2^2},
\end{equation}
respectively.
For any subspaces $\set{\calU_n}_{n=1}^N$ of $\calH$,
each of dimension $R$, it is known~\cite{ConwayHS96,DhillonHST08} that
\begin{equation}
\label{eq.chordal and simplex packing bounds}
\smash{\min_{n_1\neq n_2}\dist_{\rms}(\calU_{n_1},\calU_{n_2})
\leq\tfrac1{\sqrt{R}}\min_{n_1\neq n_2}\dist_{\rmc}(\calU_{n_1},\calU_{n_2})
\leq\sqrt{\tfrac{N}{N-1}\tfrac{D-R}{D}}.}
\end{equation}
The right-hand inequality in~\eqref{eq.chordal and simplex packing bounds}, dubbed the \textit{simplex bound}, holds with equality if and only if $\set{\calU_n}_{n=1}^N$ is an \textit{equichordal tight fusion frame} (ECTFF) for $\calH$, namely a \textit{tight fusion frame} (TFF)---having $\sum_{n=1}^N\bfP_n=A\bfI$ for some $A>0$---for which $\dist_{\rmc}(\calU_{n_1},\calU_{n_2})$ is constant over all $n_1\neq n_2$~\cite{ConwayHS96}.
Every ECTFF is a type of optimal \textit{Grassmannian code} with respect to the chordal distance, being an optimal packing of $N$ members of the \textit{Grassmannian (space)} that consists of all $R$-dimensional subspaces of $\calH$.
Meanwhile, equality holds throughout~\eqref{eq.chordal and simplex packing bounds} if and only if $\set{\calU_n}_{n=1}^N$ is an \textit{equi-isoclinic tight fusion frame} (EITFF) for $\calH$,
namely a TFF for $\calH$ whose subspaces are \textit{equi-isoclinic}~\cite{LemmensS73b}, meaning there exists $\lambda\geq0$ such that $\bfP_{n_1}\bfP_{n_2}\bfP_{n_1}=\lambda\bfP_{n_1}$ for all $n_1\neq n_2$.
Every EITFF is an ECTFF,
and is an optimal Grassmannian packing with respect to the spectral distance.

When $R=1$, every ECTFF for $\calH$ is an EITFF for $\calH$ and equates to an \textit{equiangular tight frame} (ETF) for $\calH$, namely a sequence $\set{\bfphi_n}_{n=1}^N$ of unit vectors in $\calH$ that achieves equality in the \textit{Welch bound}
\smash{$\max_{n_1\neq n_2}\abs{\ip{\bfphi_{n_1}}{\bfphi_{n_2}}}\geq[\frac{N-D}{D(N-1)}]^{\frac12}$},
and so has minimal \textit{coherence}~\cite{Welch74,StrohmerH03}.
More generally, any EITFF yields a dictionary with minimal \textit{block coherence}~\cite{DhillonHST08,EldarKB10}.
ETFs and EITFFs arise in compressed sensing~\cite{BajwaCM12,BandeiraFMW13,CalderbankTX15}
and quantum information theory~\cite{Zauner99,RenesBSC04,FuchsHS17}.
The broader study of ECTFFs in general has also attracted notably diverse interest~\cite{ConwayHS96,Zauner99,KutyniokPCL09,CohnKM16}.

We refer to a(n EI/EC)TFF for a complex $D$-dimensional Hilbert space that consists of $N$ subspaces, each of dimension $R$, as a(n $\operatorname{EI/EC)TFF}(D,N,R)$.
Every $\ECTFF(D,N,R)$ $\set{\calU_n}_{n=1}^N$ with $D\neq NR$ has a \textit{Naimark complement} which is an $\ECTFF(NR-D,N,R)$,
and which is equi-isoclinic if $\set{\calU_n}_{n=1}^N$ is.
Every $\ECTFF(D,N,R)$ $\set{\calU_n}_{n=1}^N$ with $D\neq R$ has a \textit{spatial complement} which is an $\ECTFF(D,N,D-R)$.
As we explain,
taking iterated alternating Naimark and spatial complements (\`{a} la~\cite{CasazzaFMWZ11}) of any $\ECTFF(D,N,R)$ with $N\geq 5$ generates an infinite sequence of ECTFFs with distinct parameters.
This is great news for anyone who wants Grassmannian codes:
the discovery of even a single ECTFF with new parameters implies the existence of a new infinite family of them.
That said, this same fact makes it nontrivial for the community to determine whether a newly discovered ECTFF is genuinely novel:
how does one certify, for example, that its $(D,N,R)$ parameters do no match those of any one obtained from a previously-known construction in this Naimark-spatial way?
In this paper, we offer a solution to this problem.
The key idea is to study a simple function of $(D,N,R)$ which is invariant with respect to both complements,
namely
\begin{equation}
\label{eq.invariant}
f:\bbR^3\rightarrow\bbR,
\quad
f(D,N,R)
:=DNR-D^2-NR^2.
\end{equation}
(See (6.3) of~\cite{CohnKM16} for a function of $(D,N,R)$ that previously arose in the study of the ECTFFs and only differs from $f$ by a function of $N$.)
Since $f(D,N,R)=D(NR-D)-NR^2=NR(D-R)-D^2$,
it is invariant with respect to both the \textit{Naimark} and \textit{spatial involutions}, namely the functions
$\nu,\sigma:\bbR^3\rightarrow\bbR^3$ defined by $\nu(D,N,R):=(NR-D,N,R)$  and $\sigma(D,N,R):=(D,N,D-R)$, respectively.
As we explain, this means that taking iterated alternating Naimark and spatial complements of any $\ECTFF(D,N,R)$ with $f(D,N,R)>0$ yields an infinite \textit{orbit} of $(D,N,R)$ triples whose $(D,R)$ pairs all lie on a common connected component of a hyperbola.
It turns out that such an orbit possesses a unique point $(D_0,N,R_0)$ that is \textit{minimal} in the sense that
\begin{equation}
\label{eq.minimal}
0<D_0\leq NR_0-D_0
\ \text{ and }\
0<R_0\leq D_0-R_0,
\ \text{ i.e., }\
0<\tfrac{2D_0}N\leq R_0\leq\tfrac{D_0}2.
\end{equation}
It also turns out that the only ones of these ECTFFs which might be equi-isoclinic are those with parameters $(D_0,N,R_0)$ and their Naimark complements.
We shall also fully settle the existence of (real and/or equichordal and/or equi-isoclinic) $\TFF(D,N,R)$ with $f(D,N,R)\leq 0$.
In the end, an $\ECTFF(D,N,R)$ will be certifiably novel if $f(D,N,R)>0$ and the minimal point of its orbit is not equal to those of previously discovered constructions.

In the next section we establish notation and review previously known results and concepts that we will need here.
In Section~3, we rigorously formulate and prove the claims we made above about~\eqref{eq.invariant} and~\eqref{eq.minimal};
see Theorems~\ref{thm.N=2,3}--\ref{thm.summary}.
In Section~4, we introduce a new method of constructing ECTFFs from \textit{difference families} for finite abelian groups;
see Theorem~\ref{thm.ECTFF from DF} and Corollary~\ref{cor.ECTFFs from known DFs}.
This method properly generalizes King's construction of ECTFFs from \textit{semiregular divisible difference sets}~\cite{King16},
which itself generalizes a construction of \textit{harmonic mutually unbiased bases} from \textit{semiregular relative difference sets}~\cite{GodsilR09} and that of \textit{harmonic ETFs} from \textit{difference sets}~\cite{Konig99,StrohmerH03,XiaZG05,DingF07}.
In Section~5, we then certify the novelty of some of these ``harmonic ECTFFs" using the theory of Section~3.
To do so, we compare the minimal points of their orbits against those of previously known ECTFFs; see Theorems~\ref{thm.EITFF existence} and~\ref{thm.ECTFF existence}.
(A thorough review of the ECTFF construction literature is given in that section.)
Along the way, we revisit the TFF \textit{existence test} of~\cite{CasazzaFMWZ11},
showing in Theorem~\ref{thm.TFF existence} that a real $\TFF(D,N,R)$ exists whenever a $\TFF(D,N,R)$ does.

\section{Preliminaries}

As above,
let $\calH$ be a $D$-dimensional Hilbert space over $\bbF$.
Let $\calN$ be a finite indexing set of cardinality $N>1$.
A sequence $\set{\calU_n}_{n\in\calN}$ of $R$-dimensional subspaces of $\calH$ is a TFF for $\calH$ if the corresponding rank-$R$ projections $\set{\bfP_n}_{n\in\calN}$ satisfy $\sum_{n\in\calN}\bfP_n=A\bfI$ for some $A>0$.
In this case,
$NR
\geq\rank(\sum_{n\in\calN}\bfP_n)
=\rank(A\bfI)
=D$ and $NR=\Tr(\sum_{n\in\calN}\bfP_n)=\Tr(A\bfI)=AD$,
and so we necessarily have that $A=\frac{NR}{D}\geq 1$.
Since tightness is preserved by unitary transformations,
one can without loss of generality assume $\calH$ is $\bbF^D$
(though it is sometimes convenient to do otherwise~\cite{FickusIJK20}).
As such, the existence of a given TFF only depends on $\bbF$ and the values of $D$, $N$ and $R$, and moreover if a real TFF exists then so does a complex TFF with the same parameters.
Because of this, we use ``$\TFF(D,N,R)$" to denote a complex TFF with such parameters,
and say it is \textit{real} if $\calH$ can be chosen to be $\bbR^D$
(that is, if either $\calH$ is natively real or $\calH$ is complex but there exists a unitary transformation $\bfU:\bbC^\calD\rightarrow\calH$ such that every matrix $\bfU^*\bfP_n\bfU$ has all real entries).

For any sequence $\set{\calU_n}_{n\in\calN}$ of $R$-dimensional subspaces of $\calH$,
a direct calculation gives
\begin{equation}
\label{eq.fusion frame potential}
0
\leq\Tr\biggbracket{\biggparen{\,\sum_{n\in\calN}\bfP_n-\tfrac{NR}{D}\bfI}^2}
=\sum_{n_1\in\calN}\sum_{n_2\neq n_1}\Tr(\bfP_{n_1}\bfP_{n_2})
-\tfrac{NR(NR-D)}{D}.
\end{equation}
Equality holds above if and only if $\set{\calU_n}_{n\in\calN}$ is a TFF for $\calH$.
Here, the chordal distance~\eqref{eq.chordal and spectral distance} satisfies
\begin{equation}
\label{eq.chordal distance}
[\dist_{\rmc}(\calU_{n_1},\calU_{n_2})]^2
=\tfrac12\Tr[(\bfP_{n_1}-\bfP_{n_2})^2]
=R-\Tr(\bfP_{n_1}\bfP_{n_2})
\end{equation}
for any $n_1,n_2\in\calN$.
In particular, $\Tr(\bfP_{n_1}\bfP_{n_2})$ is real,
and so we can rearrange and continue~\eqref{eq.fusion frame potential} as
\begin{equation}
\label{eq.chordal Welch}
\tfrac{R(NR-D)}{D(N-1)}
\leq\tfrac1{N(N-1)}
\sum_{n_1\in\calN}\sum_{n_2\neq n_1}\Tr(\bfP_{n_1}\bfP_{n_2})
\leq\max_{n_1\neq n_2}\Tr(\bfP_{n_1}\bfP_{n_2})
=R-\min_{n_1\neq n_2}[\dist_{\rmc}(\calU_{n_1},\calU_{n_2})]^2.
\end{equation}
Here, equality holds throughout if and only if $\set{\calU_n}_{n\in\calN}$ is a TFF for $\calH$ that is also \textit{equichordal} in the sense that $\dist_{\rmc}(\calU_{n_1},\calU_{n_2})$ is constant over all $n_1\neq n_2$,
namely when $\set{\calU_n}_{n\in\calN}$ is an ECTFF for $\calH$.
Rearranging~\eqref{eq.chordal Welch} gives the right-hand inequality of~\eqref{eq.chordal and simplex packing bounds}, which is called the \textit{simplex bound} in~\cite{ConwayHS96} since $\set{\calU_n}_{n\in\calN}$ is an ECTFF for $\calH$ if and only if $\set{\bfP_n-\tfrac{R}{D}\bfI}_{n\in\calN}$ is a regular simplex in the real Hilbert space of self-adjoint operators on $\calH$ (equipped with the Frobenius inner product).

A similar argument applies if the chordal distance on the Grassmannian is replaced with the spectral distance~\eqref{eq.chordal and spectral distance}.
(We caution that the spectral distance is not a metric on the Grassmannian,
since $\dist_{\rms}(\calU_1,\calU_2)=0$ if and only if $\calU_1$ and $\calU_2$ intersect nontrivially, which can occur even when they are distinct.)
Here, for any $n_1\neq n_2$, the fact that $\bfP_{n_1}\bfP_{n_2}$ has rank at most $R$ implies
\begin{equation}
\label{eq.chordal spectral relation}
\Tr(\bfP_{n_1}\bfP_{n_2})
=\Tr(\bfP_{n_2}^*\bfP_{n_1}^*\bfP_{n_1}^{}\bfP_{n_2}^{})
=\norm{\bfP_{n_1}\bfP_{n_2}}_\Fro^2
\leq R\norm{\bfP_{n_1}\bfP_{n_2}}_2^2
\end{equation}
where equality holds if and only if the $R$ largest singular values of $\bfP_{n_1}\bfP_{n_2}$ are equal,
namely if and only if $\calU_{n_1}$ and $\calU_{n_2}$ are \textit{isoclinic} in the sense that $\bfP_{n_1}\bfP_{n_2}\bfP_{n_1}=\lambda_{n_1,n_2}\bfP_{n_1}$ for some $\lambda_{n_1,n_2}\geq0$.
Here, note that combining~\eqref{eq.chordal and spectral distance},
\eqref{eq.chordal distance} and~\eqref{eq.chordal spectral relation} gives
$\dist_{\rms}(\calU_{n_1},\calU_{n_2})
\leq R^{-\frac12}\dist_{\rmc}(\calU_{n_1},\calU_{n_2})$, yielding the left-hand inequality of~\eqref{eq.chordal and simplex packing bounds}.
Moreover, \eqref{eq.chordal spectral relation} provides an alternative way of continuing~\eqref{eq.fusion frame potential}
\begin{equation}
\label{eq.spectral Welch}
\tfrac{NR-D}{D(N-1)}
\leq\tfrac1{N(N-1)}\sum_{n_1\in\calN}\sum_{n_2\neq n_1}\norm{\bfP_{n_1}\bfP_{n_2}}_2^2
\leq\max_{n_1\neq n_2}\norm{\bfP_{n_1}\bfP_{n_2}}_2^2
=1-\min_{n_1\neq n_2}[\dist_{\rms}(\calU_{n_1},\calU_{n_2})]^2.
\end{equation}
Equality holds throughout here (or equivalently, throughout~\eqref{eq.chordal and simplex packing bounds}) if and only if $\set{\calU_n}_{n\in\calN}$ is an EITFF for $\calH$, namely a TFF for $\calH$ which is equi-isoclinic
(meaning any pair of these subspaces are isoclinic and $\lambda_{n_1,n_2}$ is independent of $n_1\neq n_2$).
Note that in this case~\eqref{eq.chordal spectral relation} becomes $\Tr(\bfP_{n_1}\bfP_{n_2})=R\lambda$ for all $n_1\neq n_2$, implying every EITFF for $\calH$ is necessarily an ECTFF for $\calH$.

The \textit{spatial complement}~\cite{CasazzaFMWZ11} of a $\TFF(D,N,R)$ $\set{\calU_n}_{n\in\calN}$ for $\calH$ with $D\neq R$ is the sequence $\set{\calU_n^\perp}_{n\in\calN}$ of its orthogonal complements.
It is a $\TFF(D,N,D-R)$ for $\calH$ (and is real if $\set{\calU_n}_{n\in\calN}$ is real) since its projections $\set{\bfI-\bfP_n}_{n\in\calN}$ satisfy
$\sum_{n\in\calN}(\bfI-\bfP_n)
=(N-\frac{NR}{D})\bfI$.
The spatial complement of an $\ECTFF(D,N,R)$ for $\calH$ with $D\neq R$ is an $\ECTFF(D,N,D-R)$ for $\calH$ since
\begin{equation*}
[\dist_{\rmc}(\calU_{n_1}^\perp,\calU_{n_2}^\perp)]^2
=\tfrac12\norm{(\bfI-\bfP_{n_1})-(\bfI-\bfP_{n_2})}_\Fro^2
=\tfrac12\norm{\bfP_{n_1}-\bfP_{n_2}}_\Fro^2
=[\dist_{\rmc}(\calU_{n_1},\calU_{n_2})]^2.
\end{equation*}
In contrast, the spatial complement of an $\EITFF(D,N,R)$ is only an EITFF in the special case where $D=2R$;
to see this fact, and also understand the Naimark complement of a TFF,
it helps to first discuss some traditional finite frame theory.

Equip $\bbF^\calN:=\set{\bfx:\calN\rightarrow\bbF}$ with the inner product $\ip{\bfx_1}{\bfx_2}:=\sum_{n\in\calN}\overline{\bfx_1(n)}\bfx_2(n)$.
Throughout, our complex inner products are conjugate-linear in their first arguments.
In the special case where $\calN=[N]:=\set{n\in\bbZ: 1\leq n\leq N}$,
we denote $\bbF^\calN$ as simply $\bbF^N$, as usual.
Let $\set{\bfdelta_n}_{n\in\calN}$ denote the standard basis for $\bbF^\calN$,
having $\bfdelta_n(n):=1$ and $\bfdelta_n(n'):=0$ when $n\neq n'$.
When $\calD$ and $\calN$ are both finite sets,
we identify a linear map $\bfA:\bbF^\calN\rightarrow\bbF^\calD$ with a ``$\calD\times\calN$ matrix" $\bfA\in\bbF^{\calD\times\calN}$ (and vice versa) in the usual way, having $\bfA(d,n)=\ip{\bfdelta_d}{\bfA\bfdelta_n}$ for all $d\in\calD$, $n\in\calN$.

We sometimes regard a vector $\bfphi\in\calH$ as the operator $\bfphi:\bbF\rightarrow\calH$, $\bfphi(x)=x\bfphi$ whose adjoint is the linear functional $\bfphi^*:\calH\rightarrow\bbF$, $\bfphi^*\bfy=\ip{\bfphi}{\bfy}$.
The \textit{synthesis operator} of a sequence $\set{\bfphi_n}_{n\in\calN}$ of vectors in $\calH$ is $\bfPhi:\bbF^\calN\rightarrow\calH$,
$\bfPhi:=\sum_{n\in\calN}\bfphi_n^{}\bfdelta_n^*$.
Its adjoint is the \textit{analysis operator}
$\bfPhi^*:\calH\rightarrow\bbF^\calN$,
$\bfPhi^*=\sum_{n\in\calN}\bfdelta_n^{}\bfphi_n^{*}$.
Composing them gives the \textit{frame operator} $\bfPhi\bfPhi^*:\calH\rightarrow\calH$, $\bfPhi\bfPhi^*=\sum_{n\in\calN}\bfphi_n^{}\bfphi_n^*$ and the \textit{Gram matrix}
$\bfPhi^*\bfPhi
=\sum_{n_1\in\calN}\sum_{n_2\in\calN}\bfdelta_{n_1}^{}\ip{\bfphi_{n_1}^{}}{\bfphi_{n_2}^{}}\bfdelta_{n_2}^*$,
namely the $\calN\times\calN$ matrix with $(n_1,n_2)$th entry
$(\bfPhi^*\bfPhi)(n_1,n_2)=\ip{\bfphi_{n_1}^{}}{\bfphi_{n_2}^{}}$.
In the special case where $\calH=\bbF^\calD$ for some finite set $\calD$,
$\bfPhi$ is the $\calD\times\calN$ matrix which has $\bfphi_n$ as its $n$th column,
$\bfPhi^*$ is its $\calN\times\calD$ conjugate (Hermitian) transpose,
and $\bfPhi\bfPhi^*$ and $\bfPhi^*\bfPhi$ are their $\calD\times\calD$ and $\calN\times\calN$ products, respectively.
In general, every nontrivial positive semidefinite $\calN\times\calN$ matrix is the Gram matrix of some sequence $\set{\bfphi_n}_{n\in\calN}$ of vectors which is unique up to unitary transformations on its span.

A sequence $\set{\bfphi_n}_{n\in\calN}$ of vectors in $\calH$ is a \textit{tight frame} for $\calH$ if $\bfPhi\bfPhi^*=A\bfI$ for some $A>0$.
When this occurs, $\frac1A\bfPhi^*\bfPhi$ is an $\calN\times\calN$ rank-$D$ projection.
In this case, a sequence $\set{\bfpsi_n}_{n\in\calN}$ of vectors in some Hilbert space $\calK$ is a \textit{Naimark complement} of $\set{\bfphi_n}_{n\in\calN}$ if its Gram matrix $\bfPsi^*\bfPsi$ is a positive scalar multiple of the complementary projection $\bfI-\frac1A\bfPhi^*\bfPhi$.
When $N\neq D$, such a sequence $\set{\bfpsi_n}_{n\in\calN}$ exists,
is unique up to nonzero scalar multiples and unitary transformations,
and is a tight frame for its $(N-D)$-dimensional span.
In the special case where $\calH=\bbF^\calD$,
$\set{\bfphi_n}_{n\in\calN}$ is a tight frame for $\calH$ if and only if the rows of the $\calD\times\calN$ matrix $\bfPhi$ are nonzero, equal-norm and mutually orthogonal.
In this case, we can complete these rows to an equal-norm orthogonal basis for $\bbF^\calN$,
that is,
find a matrix $\bfPsi$ such that
$[\begin{smallmatrix}\bfPhi\\\bfPsi\end{smallmatrix}]$
is a scalar multiple of a unitary.
Here,
the columns $\set{\bfpsi_n}_{n\in\calN}$ of $\bfPsi$ form a Naimark complement of $\set{\bfphi_n}_{n\in\calN}$.

Note that a sequence $\set{\bfphi_n}_{n\in\calN}$ of unit-norm vectors in $\calH$ is a tight frame for $\calH$ if and only if the corresponding rank-$1$ projections $\set{\bfphi_n^{}\bfphi_n^*}_{n\in\calN}$ sum to a multiple of the identity.
From this perspective, tight fusion frames are generalizations of unit-norm tight frames.
But they can also be regarded as special cases of them,
and this leads to the notion of the Naimark complement of a TFF.
To elaborate,
let $\set{\calU_n}_{n\in\calN}$ be a $\TFF(D,N,R)$ for $\calH$ with corresponding projections $\set{\bfP_n}_{n\in\calN}$.
Fix some $R$-element index set $\calR$.
For each $n\in\calN$, let $\set{\bfphi_{n,r}}_{r\in\calR}$ be any orthonormal basis (ONB) for $\calU_n$,
and let $\bfPhi_n:\bbF^\calR\rightarrow\calH$ be its synthesis operator,
implying $\bfP_n=\bfPhi_n^{}\bfPhi_n^*=\sum_{r\in\calR}\bfphi_{n,r}^{}\bfphi_{n,r}^*$.
Then the concatenation $\set{\bfphi_{n,r}}_{(n,r)\in\calN\times\calR}$ of these $N$ subspace-ONBs is a (traditional) tight frame for $\calH$:
\begin{equation*}
\bfPhi\bfPhi^*
=\sum_{(n,r)\in\calN\times\calR}
\bfphi_{n,r}^{}\bfphi_{n,r}^*
=\sum_{n\in\calN}\sum_{r\in\calR}\bfphi_{n,r}^{}\bfphi_{n,r}^*
=\sum_{n\in\calN}\bfPhi_n^{}\bfPhi_n^*
=\sum_{n\in\calN}\bfP_n
=\tfrac{NR}{D}\bfI.
\end{equation*}
As such,
the $(\calN\times\calR)\times(\calN\times\calR)$ Gram matrix $\bfPhi^*\bfPhi$ of $\set{\bfphi_{n,r}}_{(n,r)\in\calN\times\calR}$ is a rank-$D$ projection scaled by a factor of $\frac{NR}{D}\geq 1$.
At the same time, $\bfPhi^*\bfPhi$ is naturally regarded as an $\calN\times\calN$ array whose $(n_1,n_2)$th block is the $\calR\times\calR$ \textit{cross-Gram} matrix $\bfPhi_{n_1}^*\bfPhi_{n_2}^{}$:
\begin{equation*}
(\bfPhi^*\bfPhi)((n_1,r_1),(n_2,r_2))
=\ip{\bfphi_{n_1,r_1}}{\bfphi_{n_2,r_2}}
=\ip{\bfPhi_{n_1}\bfdelta_{r_1}}{\bfPhi_{n_2}\bfdelta_{r_2}}
=(\bfPhi_{n_1}^*\bfPhi_{n_2}^{})(r_1,r_2).
\end{equation*}
In particular, since each sequence $\set{\bfphi_{n,r}}_{r\in\calR}$ is orthonormal,
every diagonal block of this ``fusion Gram" matrix is an $\calR\times\calR$ identity.
(We caution that since $\set{\bfphi_{n,r}}_{r\in\calR}$ may be any ONB for $\calU_n$, its synthesis operator $\bfPhi_n$ is only unique up to right unitaries.
Thus, while the \textit{fusion frame operator} $\bfPhi\bfPhi^*=\sum_{n\in\calN}\bfPhi_n^{}\bfPhi_n^*=\sum_{n\in\calN}\bfP_n$ is invariant with respect to one's choices of bases,
the ``fusion Gram" matrix $\bfPhi^*\bfPhi$ is not.)
Since $\set{\bfphi_{n,r}}_{(n,r)\in\calN\times\calR}$ is an $NR$-vector tight frame for the $D$-dimensional space $\calH$,
it has a Naimark complement $\set{\bfpsi_{n,r}}_{(n,r)\in\calN\times\calR}$ which is a tight frame for some $(NR-D)$-dimensional Hilbert space $\calK$, provided $NR\neq D$.
We elect to scale this Naimark complement so that the diagonal blocks of its Gram matrix $\bfPsi^*\bfPsi$ are $\calR\times\calR$ identities, letting
\begin{equation}
\label{eq.Naimark complement}
\bfPsi^*\bfPsi
=\tfrac{NR}{NR-D}(\bfI-\tfrac{D}{NR}\bfPhi^*\bfPhi),
\ \text{ i.e., }\
\bfPsi_{n_1}^*\bfPsi_{n_2}^{}
=\left\{\begin{array}{cl}
\bfI,&\ n_1=n_2,\\
-\tfrac{D}{NR-D}\bfPhi_{n_1}^*\bfPhi_{n_2}^{},&\ n_1\neq n_2.
\end{array}\right.
\end{equation}
Because of this, $\set{\bfpsi_{n,r}}_{(n,r)\in\calN\times\calR}$ is a tight frame for $\calK$ with the property that, for every $n$, $\set{\bfpsi_{n,r}}_{r\in\calR}$ is orthonormal.
Defining $\set{\calV_n}_{n\in\calN}$ by $\calV_n:=\Span\set{\bfpsi_{n,r}}_{r\in\calR}$ for each $n$ thus yields a $\TFF(NR-D,N,R)$ for $\calK$.
Here, if $\set{\calU_n}_{n\in\calN}$ is real,
$\set{\calV_n}_{n\in\calN}$ can be chosen to be real.

In this context, for any $n_1,n_2\in\calN$, the fact that $\norm{\bfPhi_{n_1}^*\bfPhi_{n_2}^{}}_2\leq\norm{\bfPhi_{n_1}}_2\norm{\bfPhi_{n_2}}_2=1$ implies that the singular values of $\bfPhi_{n_1}^*\bfPhi_{n_2}$ can be expressed as $\set{\cos(\theta_{n_1,n_2,r})}_{r=1}^{R}$ for some increasing sequence $\set{\theta_{n_1,n_2,r}}_{r=1}^{R}$ of \textit{principal angles} in $[0,\frac \pi2]$.
Here since
$\Tr(\bfP_{n_1}\bfP_{n_2})
=\Tr(\bfPhi_{n_1}^{}\bfPhi_{n_1}^*\bfPhi_{n_2}^{}\bfPhi_{n_2}^*)
=\norm{\bfPhi_{n_1}^*\bfPhi_{n_2}^{}}_\Fro^2
=\sum_{r=1}^{R}\cos^2(\theta_{n_1,n_2,r})$,
$\set{\calU_n}_{n\in\calN}$ is equichordal if and only if its off-diagonal cross-Gram matrices all have the same Frobenius norm.
In light of~\eqref{eq.Naimark complement}, this implies that the Naimark complement of any $\ECTFF(D,N,R)$ with $D\neq NR$ is an $\ECTFF(NR-D,N,R)$.
Meanwhile, $\set{\calU_n}_{n\in\calN}$ is equi-isoclinic if and only if there exists $\lambda\geq0$ such that
\begin{equation*}
\bfPhi_{n_1}^{}\bfPhi_{n_1}^*\bfPhi_{n_2}^{}\bfPhi_{n_2}^*\bfPhi_{n_1}^{}\bfPhi_{n_1}^*
=\bfP_{n_1}\bfP_{n_2}\bfP_{n_1}
=\lambda\bfP_{n_1}
=\lambda\bfPhi_{n_1}^{}\bfPhi_{n_1}^*
\end{equation*}
for all $n_1\neq n_2$.
Since $\bfPhi_n^*\bfPhi_n^{}=\bfI$ for all $n$,
this equates to having
$\bfPhi_{n_1}^*\bfPhi_{n_2}^{}\bfPhi_{n_2}^*\bfPhi_{n_1}^{}=\lambda\bfI$ for all $n_1\neq n_2$,
namely to $\theta_{n_1,n_2,r}$ being some constant $\theta$ over all $n_1\neq n_2$ and $r$.
That is, $\set{\calU_n}_{n\in\calN}$ is equi-isoclinic if and only if each of its off-diagonal cross-Gram matrices is a common scalar multiple of some unitary.
(In particular, when $\set{\calU_n}_{n\in\calN}$ is an $\EITFF(D,N,R)$,
we have equality throughout~\eqref{eq.spectral Welch} and so this scalar is necessarily $\cos(\theta)=[\frac{NR-D}{D(N-1)}]^{\frac12}$.)
Together with~\eqref{eq.Naimark complement} this implies that the Naimark complement of an $\EITFF(D,N,R)$ with $D\neq NR$ is an $\EITFF(NR-D,N,R)$.

Notably, the spatial complement of an $\EITFF(D,N,R)$ with $D\neq R$ is only an EITFF when $D=2R$.
Indeed, any $\TFF(D,N,R)$ $\set{\calU_n}_{n\in\calN}$ with $R<D<2R$ cannot be equi-isoclinic,
since in this case any two subspaces $\calU_{n_1}$ and $\calU_{n_2}$ intersect nontrivially, implying their smallest principal angle $\theta_{n_1,n_2,1}$ satisfies $\cos^2(\theta_{n_1,n_2,1})=\norm{\bfP_{n_1}\bfP_{n_2}}_2^2=1>\tfrac{NR-D}{D(N-1)}$.
To precisely determine how spatial complements affect principal angles,
for each $n$ let $\bfXi_n$ be the synthesis operator for an ONB $\set{\bfxi_s}_{s\in\calS}$ for $\calU_n^\perp$,
and so $\bfPhi_n^{}\bfPhi_n^*+\bfXi_n^{}\bfXi_n^*=\bfI$, $\bfPhi_n^*\bfPhi_n^{}=\bfI$, $\bfXi_n^*\bfXi_n^{}=\bfI$ and $\bfPhi_n^*\bfXi_n^{}=\bfzero$.
Note that
\begin{align*}
\bfI-(\bfPhi_{n_1}^*\bfPhi_{n_2}^{})(\bfPhi_{n_1}^*\bfPhi_{n_2}^{})^*
&=\bfI-\bfPhi_{n_1}^*(\bfI-\bfXi_{n_2}^{}\bfXi_{n_2}^*)\bfPhi_{n_1}^{}
=(\bfPhi_{n_1}^*\bfXi_{n_2}^{})(\bfPhi_{n_1}^*\bfXi_{n_2}^{})^*,\\
\bfI-(\bfXi_{n_1}^*\bfXi_{n_2}^{})^*(\bfXi_{n_1}^*\bfXi_{n_2}^{})
&=\bfI-\bfXi_{n_2}^*(\bfI-\bfPhi_{n_1}^{}\bfPhi_{n_1}^*)\bfXi_{n_2}^{}
=(\bfPhi_{n_1}^*\bfXi_{n_2}^{})^*(\bfPhi_{n_1}^*\bfXi_{n_2}^{}).
\end{align*}
As such, the spectra of $\bfI-(\bfPhi_{n_1}^*\bfPhi_{n_2}^{})(\bfPhi_{n_1}^*\bfPhi_{n_2}^{})^*$
and
$\bfI-(\bfXi_{n_1}^*\bfXi_{n_2}^{})^*(\bfXi_{n_1}^*\bfXi_{n_2}^{})$
are zero-padded versions of each other,
which in turn implies that the sequences of singular values of $\bfPhi_{n_1}^*\bfPhi_{n_2}^{}$ and $\bfXi_{n_1}^*\bfXi_{n_2}^{}$ are $1$-padded versions of each other,
and so the sequences of principal angles between $\calU_{n_1}$ and $\calU_{n_2}$ and those between $\calU_{n_1}^\perp$ and $\calU_{n_2}^\perp$ are $0$-padded versions of each other.
In the special case where $D=2R$, this implies that if $\set{\calU_n}_{n\in\calN}$ is equi-isoclinic then $\set{\calU_n^\perp}_{n\in\calN}$ is as well.

In general, the rank-$R$ projections $\set{\bfP_n}_{n\in\calN}$ of any sequence $\set{\calU_n}_{n\in\calN}$ of $R$-dimensional subspaces of $\calH$ lie in the real Hilbert space of self-adjoint operators over $\calH$.
The Gram matrix of these projections (not to be confused with the aforementioned ``fusion Gram" matrix of a concatenation of orthonormal bases for these spaces) has $(n_1,n_2)$th entry $\ip{\bfP_{n_1}}{\bfP_{n_2}}_\Fro=\Tr(\bfP_{n_1}\bfP_{n_2})$.
When $\set{\calU_n}_{n\in\calN}$ is equichordal and nonidentical,
this Gram matrix is of the form $R\bfI+C(\bfJ-\bfI)$ where $0<C<R$ and $\bfJ$ is an all-ones $\calN\times\calN$ matrix.
Such a Gram matrix is invertible,
implying $\set{\bfP_n}_{n\in\calN}$ is linearly independent.
This yields \textit{Gerzon's bound} on the number $N$ of equichordal nonidentical subspaces of a Hilbert space of dimension $D$:
\begin{equation}
\label{eq.Gerzon's bound}
N
\leq
\dim\set{\bfA:\calH\rightarrow\calH\ |\ \bfA^*=\bfA}
=\left\{\begin{array}{cl}
\tfrac12D(D+1),&\ \bbF=\bbR,\\
D^2,&\ \bbF=\bbC.
\end{array}\right.
\end{equation}

In general, for any sequence $\set{\calU_n}_{n\in\calN}$ of $R$-dimensional subspaces of $\calH$ the fact that $\bfPhi_n^*\bfPhi_n^{}=\bfI$ for all $n$ implies that
$\norm{\bfP_{n_1}\bfP_{n_2}}_2
=\norm{\bfPhi_{n_1}^{}\bfPhi_{n_1}^*\bfPhi_{n_2}^{}\bfPhi_{n_2}^*}_2
=\norm{\bfPhi_{n_1}^*\bfPhi_{n_2}^{}}_2$ for all $n_1,n_2$,
converting~\eqref{eq.spectral Welch} into the following lower bound on the \textit{block coherence}~\cite{EldarKB10,CalderbankTX15} of $\set{\bfphi_{n,r}}_{(n,r)\in\calN\times\calR}$:
\begin{equation}
\label{eq.block Welch}
\max_{n_1\neq n_2}\norm{\bfPhi_{n_1}^*\bfPhi_{n_2}^{}}_2
\geq\bigbracket{\tfrac{NR-D}{D(N-1)}}^{\frac12}.
\end{equation}
When $R=1$,
each $\bfPhi_n$ is the synthesis operator of a single arbitrary unit vector $\bfphi_n\in\calU_n$.
In this special case, each cross-Gram matrix $\bfPhi_{n_1}^*\bfPhi_{n_2}^{}$ is a $1\times 1$ matrix with entry $\bfphi_{n_1}^*\bfphi_{n_2}^{}=\ip{\bfphi_{n_1}}{\bfphi_{n_2}}$,
and so
$\norm{\bfP_{n_1}\bfP_{n_2}}_\Fro^2
=\Tr(\bfP_{n_1}\bfP_{n_2})
=\norm{\bfPhi_{n_1}^*\bfPhi_{n_2}}_\Fro^2
=\abs{\ip{\bfphi_{n_1}}{\bfphi_{n_2}}}^2
=\norm{\bfPhi_{n_1}^*\bfPhi_{n_2}}_2^2
=\norm{\bfP_{n_1}\bfP_{n_2}}_2^2$.
In this context, $\set{\calU_n}_{n\in\calN}$ is equichordal if and only if it is equi-isoclinic, and this occurs if and only if $\set{\bfphi_n}_{n\in\calN}$ is \textit{equiangular}, that is, $\abs{\ip{\bfphi_{n_1}}{\bfphi_{n_2}}}$ is constant over all $n_1\neq n_2$.
Here, \eqref{eq.block Welch} reduces to the Welch bound,
and $\set{\bfphi_n}_{n\in\calN}$ achieves equality in it if and only if it is an ETF for $\calH$.
For this reason, we sometimes refer to an $\ECTFF(D,N,1)$ as an ``$\ETF(D,N)$".
In particular,
every $\ETF(D,N)$ with $D<N$ has a Naimark complement which is an $\ETF(N-D,N)$,
but the spatial complement of an $\ETF(D,N)$ with $D>1$ is an $\ECTFF(D,N,D-1)$.
(In light of~\eqref{eq.Gerzon's bound}, the latter can only be an ETF when $D=2$ and $N\in\set{2,3,4}$.)

We need two trivial TFF constructions for our work in the next section.
Any $\TFF(D,N,R)$ for $\calH$ with $D=R$ necessarily consists of $N$ copies of the entire space $\calH$.
Meanwhile, the synthesis operator $\bfPhi:\bbF^{\calN\times\calR}\rightarrow\calH$ of a (concatenation of orthonormal bases for the subspaces of a) $\TFF(D,N,R)$ for $\calH$ with $D=NR$ necessarily satisfies $\bfPhi\bfPhi^*=\tfrac{NR}{D}\bfI=\bfI$ where $\dim(\bbF^{\calN\times\calR})=NR=D=\dim(\calH)$,
implying that such a transformation $\bfPhi$ is necessarily unitary.
Any such $\TFF(D,N,R)$ thus consists of the $N$ respective spans of any partition of an ONB for $\calH$ into $R$-element subsequences.
In either of these two trivial cases,
these constructions in fact yield an $\EITFF(D,N,R)$,
and can moreover always be chosen to be real by, for example, letting $\calH=\bbR^D$.

\section{Naimark-spatial orbits of equichordal tight fusion frames}

In the previous section we reviewed a number of previously known facts about TFFs,
including: if a $\TFF(D,N,R)$ exists then $R\leq D\leq NR$;
any $\TFF(D,N,R)$ with $D\neq NR$ has a Naimark complement that is a $\TFF(NR-D,N,R)$
and that is real and/or equichordal and/or equi-isoclinic if the original TFF is as well;
any $\TFF(D,N,R)$ with $D\neq R$ has a spatial complement that is a $\TFF(D-R,N,R)$
and that is real and/or equichordal if the original TFF is too.
Remarkably, it turns out that taking alternating Naimark and spatial complements of any $\TFF(D,N,R)$ with either $N\geq 5$ or $N=4$, $D\neq 2R$ yields an infinite sequence of TFFs with distinct parameters.

For example, four copies of the scalar $1$ form a trivial $\ETF(1,4)$ for $\calH=\bbR$, and so a real $\EITFF(1,4,1)$.
Though this ECTFF does not have a spatial complement (since $D=1=R$), its Naimark complement is a real $\EITFF(3,4,1)$.
Geometrically, each of the four lines that comprise this $\ECTFF(3,4,1)$ contains one of the four vertices of a regular tetrahedron centered at the origin.
Though the Naimark complement of this $\EITFF(3,4,1)$ is just our original $\EITFF(1,4,1)$ (up to unitary transformations and nonzero scalar multiples),
its spatial complement is a real $\ECTFF(3,4,2)$.
Taking alternating Naimark and spatial complements in this fashion yields an infinite sequence of real ECTFFs with the following $(D,N,R)$ parameters:
\begin{equation*}
\mathbf{(1,4,1)}
\underset{\rmN}{\leftrightarrow}
(3,4,1)
\underset{\rms}{\leftrightarrow}
(3,4,2)
\underset{\rmN}{\leftrightarrow}
(5,4,2)
\underset{\rms}{\leftrightarrow}
(5,4,3)
\underset{\rmN}{\leftrightarrow}
(7,4,3)
\underset{\rms}{\leftrightarrow}
(7,4,4)
\underset{\rmN}{\leftrightarrow}
\dotsb.
\end{equation*}
For another example, consider the real $\EITFF(4,4,1)$ comprised of the four canonical axes of $\bbR^4$.
Though this EITFF has no Naimark complement (since $D=4=NR$),
its spatial complement is a real $\ECTFF(4,4,3)$,
and moreover taking alternating Naimark and spatial complements yields an infinite sequence of real ECTFFs with the following $(D,N,R)$ parameters:
\begin{equation*}
\mathbf{(4,4,1)}
\underset{\rms}{\leftrightarrow}
(4,4,3)
\underset{\rmN}{\leftrightarrow}
(8,4,3)
\underset{\rms}{\leftrightarrow}
(8,4,5)
\underset{\rmN}{\leftrightarrow}
(12,4,5)
\underset{\rms}{\leftrightarrow}
(12,4,7)
\underset{\rmN}{\leftrightarrow}
(16,4,7)
\underset{\rms}{\leftrightarrow}
\dotsb.
\end{equation*}
Such sequences can also be bi-infinite:
an $\ETF(3,7)$ exists~\cite{FickusM16},
and taking alternating Naimark and spatial complements of this $\EITFF(3,7,1)$ yields distinct parameters depending on which complement is applied first:
\begin{equation}
\label{eq.(3,7,1) orbit}
\dotsb
\underset{\rmN}{\leftrightarrow}
(11,7,9)
\underset{\rms}{\leftrightarrow}
(11,7,2)
\underset{\rmN}{\leftrightarrow}
(3,7,2)
\underset{\rms}{\leftrightarrow}
\mathbf{(3,7,1)}
\underset{\rmN}{\leftrightarrow}
(4,7,1)
\underset{\rms}{\leftrightarrow}
(4,7,3)
\underset{\rmN}{\leftrightarrow}
(17,7,3)
\underset{\rms}{\leftrightarrow}
\dotsb.
\end{equation}
To discuss such sequences in general, it helps to formally define some related concepts in terms of the Naimark and spatial involutions mentioned in the introduction:
\begin{definition}
\label{def.orbit}
Let $\nu,\sigma:\bbR^3\rightarrow\bbR^3$, $\nu(D,N,R):=(NR-D,N,R)$, $\sigma(D,N,R):=(D,N,D-R)$.
The \textit{Naimark-spatial sequence} of $(D,N,R)\in\bbZ^3$ with $N>1$ is the doubly infinite sequence $\set{(D^{(K)},N,R^{(K)})}_{K=-\infty}^{\infty}$ with
$(D^{(0)},N,R^{(0)})=(D,N,R)$ and
\begin{equation*}
\nu(D^{(2J+1)},N,R^{(2J+1)})=(D^{(2J)},N,R^{(2J)})
=\sigma(D^{(2J-1)},N,R^{(2J-1)}),\quad\forall J\in\bbZ.
\end{equation*}
The \textit{(Naimark-spatial) orbit} of $(D,N,R)$ is its orbit of $(D,N,R)$ under the action of the group generated by $\nu$ and $\sigma$,
namely the set $\Orb(D,N,R):=\set{(D^{(K)},N,R^{(K)}): K\in\bbZ}$.
\end{definition}

Here, since $\nu$ and $\sigma$ are involutions (that is, are their own inverses),
the above definition of the Naimark-spatial sequence of $(D,N,R)$ is simply a succinct way of stating that it is the sequence of all triples obtained by iteratively applying the Naimark and spatial involutions to it in an alternating fashion, beginning with either one:
\begin{align}
\nonumber\dotsb\ (D^{(-3)},N,R^{(-3)})&=\sigma(\nu(\sigma(D,N,R))),\\
\nonumber(D^{(-2)},N,R^{(-2)})&=\nu(\sigma(D,N,R)),\\
\nonumber(D^{(-1)},N,R^{(-1)})&=\sigma(D,N,R),\\
\label{eq.Naimark spatial sequence}
(D^{(0)},N,R^{(0)})&=(D,N,R),\\
\nonumber(D^{(1)},N,R^{(1)})&=\nu(D,N,R),\\
\nonumber(D^{(2)},N,R^{(2)})&=\sigma(\nu(D,N,R)),\\
\nonumber(D^{(3)},N,R^{(3)})&=\nu(\sigma(\nu(D,N,R))),\ \dotsb .
\end{align}
(Though the middle parameter of this sequence remains constant, we do not discard it since it is used to evaluate $\nu$.)
From this, it immediately follows that $\Orb(D,N,R)$ is indeed the orbit of $(D,N,R)$ under the action of the group generated by $\nu$ and $\sigma$.

When a $\TFF(D,N,R)$ exists, a Naimark or spatial complement of it has parameters $\nu(D,N,R)$ or $\sigma(D,N,R)$, respectively.
As such, in this case $\Orb(D,N,R)$ contains the parameter triples of any TFF that can be obtained from it in the \textit{Naimark-spatial way}, that is, via an arbitrary finite number of iterated alternating Naimark and spatial complements, beginning with either type.
We caution however that since a $\TFF(D,N,R)$ only has a Naimark or spatial complement when $D\neq NR$ or $D\neq R$, respectively, $\Orb(D,N,R)$ might contain triples that are not the parameters of any TFF.
For example,
$(1,4,0)\in\Orb(1,4,1)$ but no $\TFF(1,4,0)$ exists.
Rather, a $\TFF(D',N,R')$ exists for every $(D',N,R')\in\Orb(D,N,R)$ only if $\Orb(D,N,R)$ is contained in $\set{(D',N,R')\in\bbZ^3: 0<R'<D'<NR'}$.
Since $\Orb(D,N,R)$ is invariant under $\nu$ and $\sigma$,
this occurs if and only if it is contained in $\set{(D',N,R')\in\bbZ^3: D'>0, R'>0}$.

As mentioned in the introduction,
the key to understanding Naimark-spatial orbits and their implications for (real and/or equichordal and/or equi-isoclinic) TFFs is the function~\eqref{eq.invariant}, which is invariant with respect to both the Naimark and spatial involutions.
For any positive integer $N$, this function is a quadratic form of $(D,R)$:
\begin{equation}
\label{eq.quadratic form}
f_N:\bbR^2\rightarrow\bbR,
\quad
f_N(D,R)
:=f(D,N,R)
=DNR-D^2-NR^2
=\left[\begin{array}{cc}D&R\end{array}\right]
\left[\begin{array}{rr}-1&\tfrac N2\\\tfrac N2&-N\end{array}\right]
\left[\begin{array}{c}D\\R\end{array}\right].
\end{equation}
This $2\times 2$ matrix has characteristic polynomial
\begin{equation*}
\left|\begin{array}{rr}
 \lambda+1&-\tfrac N2\\
-\tfrac N2& \lambda+N
\end{array}\right|
=(\lambda+1)(\lambda+N)-\tfrac{N^2}{4}
=\lambda^2+(N+1)\lambda-\tfrac{N(N-4)}4,
\end{equation*}
and so has eigenvalues $\lambda=\tfrac12\{-(N+1)\pm[(N+1)^2+N(N-4)]^{\frac12}\}$.
In particular, its lesser eigenvalue is negative,
while its greater eigenvalue is negative, zero or positive depending on whether $N<4$, $N=4$ or $N>4$, respectively.
In these three cases,
a level set $\set{(D,R)\in\bbR^2: f_N(D,R)=C}$ is an ellipse, either one line or two parallel lines, or a hyperbola, respectively.
Taking a Naimark or spatial involution corresponds to moving horizontally or vertically, respectively, from one point on such a level set to another
(which might be the same point if $D=NR-D$ or $R=D-R$).
Taking alternating Naimark and spatial involutions thus corresponds to moving along such a level set via an alternating sequence of horizontal and vertical steps.
When $N<4$ such paths only contain a finite number of distinct points.
Meanwhile, when $N\geq 4$, the nature of such a path depends on whether $C=0$, $C<0$ or $C>0$.
In particular,
when $C<0$, it infinitely bounces back and forth between either two parallel lines (when $N=4$) or the two connected components of a hyperbola (when $N>4$).
When $C>0$ (and so necessarily $N>4$), it instead infinitely weaves itself along a single connected component of a hyperbola that lies in the first quadrant.
As such, we now rigorously analyze $\Orb(D,N,R)$ for $(D,N,R)\in\bbZ^3$, $D>0$, $N>1$, $R>0$ in four distinct cases: (i) $N\in\set{2,3}$, (ii) $N\geq 4$, $f(D,N,R)=0$, (iii) $N\geq 4$, $f(D,N,R)<0$ and (iv) $f(D,N,R)>0$.

\subsection{Naimark-spatial orbits when $N\in\set{2,3}$}

When $N\in\set{2,3}$, the orbit of any $(D,N,R)$ under $\nu$ and $\sigma$ is relatively simple.
For instance, when $N=2$, such an orbit consists of at most eight distinct points:
\begin{align}
\nonumber
\dotsb
\underset{\sigma}{\leftrightarrow}&
(D,2,R)
\underset{\nu}{\leftrightarrow}
(2R-D,2,R)
\underset{\sigma}{\leftrightarrow}
(2R-D,2,R-D)
\underset{\nu}{\leftrightarrow}
(-D,2,R-D)\\
\nonumber
\underset{\sigma}{\leftrightarrow}&
(-D,2,-R)
\underset{\nu}{\leftrightarrow}
(D-2R,2,-R)
\underset{\sigma}{\leftrightarrow}
(D-2R,2,D-R)
\underset{\nu}{\leftrightarrow}
(D,2,D-R)\\
\label{eq.orbit when N=2}
\underset{\sigma}{\leftrightarrow}&
(D,2,R)
\underset{\nu}{\leftrightarrow}
\dotsb.
\end{align}
Here note that when a $\TFF(D,2,R)$ exists, a $\TFF(-D,2,-R)$ does not.
In this case, $\Orb(D,2,R)$ thus properly contains the set of triples that arise in the Naimark-spatial way from a $\TFF(D,2,R)$.
A similar phenomenon occurs when $N=3$:
for any $(D,R)\in\bbR^2$, $\Orb(D,3,R)$ consists of at most twelve distinct points,
one of which is $(-D,3,-R)$:
\begin{align}
\nonumber
\dotsb
\underset{\sigma}{\leftrightarrow}&
(D,3,R)
\underset{\nu}{\leftrightarrow}
(3R-D,3,R)
\underset{\sigma}{\leftrightarrow}
(3R-D,3,2R-D)
\underset{\nu}{\leftrightarrow}
(3R-2D,3,2R-D)\\
\nonumber
\underset{\sigma}{\leftrightarrow}&
(3R-2D,3,R-D)
\underset{\nu}{\leftrightarrow}
(-D,3,R-D)
\underset{\sigma}{\leftrightarrow}
(-D,3,-R)
\underset{\nu}{\leftrightarrow}
(D-3R,3,-R)\\
\nonumber
\underset{\sigma}{\leftrightarrow}&
(D-3R,3,D-2R)
\underset{\nu}{\leftrightarrow}
(2D-3R,3,D-2R)
\underset{\sigma}{\leftrightarrow}
(2D-3R,3,D-R)
\underset{\nu}{\leftrightarrow}
(D,3,D-R)\\
\label{eq.orbit when N=3}
\underset{\sigma}{\leftrightarrow}&
(D,3,R)
\underset{\nu}{\leftrightarrow}
\dotsb.
\end{align}
It is therefore not surprising that there is little freedom in the parameters of such TFFs:

\begin{theorem}
\label{thm.N=2,3}
Let $D$ and $R$ be positive integers.
\begin{enumerate}
\renewcommand{\labelenumi}{(\alph{enumi})}
\item
A $\TFF(D,2,R)$ exists if and only if $R\in\set{\frac D2,D}$.
Such TFFs are necessarily equi-isoclinic, and can be chosen to be real.\smallskip
\item
A $\TFF(D,3,R)$ exists if and only if $R\in\set{\frac D3,\frac D2,\frac{2D}3,D}$.
Such TFFs are necessarily equi-isoclinic when $R\in\set{\frac D3,\frac D2,D}$,
equichordal when $R=\frac{2D}3$,
and can be chosen to be real.
\end{enumerate}
\end{theorem}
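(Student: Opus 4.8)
The plan is to prove each direction by combining the trivial TFF constructions with the Naimark and spatial complement machinery, and then to pin down the remaining parameter possibilities using the quadratic form $f_N$. For part (a), observe that if a $\TFF(D,2,R)$ exists then $R\le D\le 2R$, so $\frac D2\le R\le D$. We must rule out the strict interior. If $\frac D2 < R < D$, then $D\ne R$ and $D\ne 2R=NR$, so the orbit point $(D,2,R)$ has both a Naimark complement (a $\TFF(2R-D,2,R)$) and a spatial complement (a $\TFF(D,2,D-R)$); following the cycle~\eqref{eq.orbit when N=2} one reaches the triple $(-D,2,-R)$, which cannot be the parameters of any TFF since it violates $D>0$. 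The cleanest way to make this rigorous: show that if $\frac D2 < R < D$ then iterating alternating complements eventually produces a triple with a nonpositive first or third coordinate, contradicting that every triple in the orbit of an actual TFF must lie in $\{(D',2,R'): 0<R'<D'<2R'\}$ (equivalently, $D'>0$, $R'>0$, as noted just before this subsection). So only $R\in\{\frac D2,D\}$ survives. Conversely, $R=D$ gives the trivial $\TFF(D,2,D)$ consisting of two copies of $\calH$, which the preliminaries note is a real EITFF; and $R=\frac D2$ is realized by taking the Naimark complement of the trivial $\EITFF(D,2,\frac D2)$ built from an ONB partition (note $NR=2\cdot\frac D2=D$, wait---here we instead want $D=NR$, i.e. the $\TFF(\frac D2 \cdot 2, 2, \frac D2)$)---more directly, partition an ONB of $\bbR^D$ into two halves to get a real $\EITFF(D,2,\frac D2)$ outright. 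Both constructions are real and equi-isoclinic, as required.

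For part (b), the same strategy applies but with more bookkeeping because the orbit~\eqref{eq.orbit when N=3} has up to twelve points. From $R\le D\le 3R$ we get $\frac D3\le R\le D$. Suppose $R$ lies strictly between consecutive elements of $\{\frac D3,\frac D2,\frac{2D}3,D\}$; in each such subinterval I would show that the alternating-complement path through~\eqref{eq.orbit when N=3} reaches a triple with a nonpositive coordinate (e.g. $(-D,3,-R)$ appears explicitly in the cycle), again contradicting that the orbit of a genuine TFF stays in the positive region. Conversely, for each of the four allowed values I exhibit a construction: $R=D$ is $N$ copies of $\calH$; $R=\frac D3$ is an ONB partition into three pieces; $R=\frac{2D}3$ is the spatial complement of the $R=\frac D3$ case (which is an ECTFF but, since it fails the isoclinic condition $R<D<2R$ with $D=3R/2 < 2R=D$... here $D = 3R/2$ and $2R = 2R$, so $R< D <2R$ forces the subspaces to intersect nontrivially and they cannot be equi-isoclinic, but they remain equichordal by the spatial-complement argument); and $R=\frac D2$ requires the most thought. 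For $R = \frac D2$: take the trivial $\EITFF(\frac{3D}2, 3, \frac D2)$ from an ONB partition of $\bbR^{3D/2}$ into three halves ($N R = 3\cdot \frac D2 = \frac{3D}{2} = D'$, the ambient dimension), then take its Naimark complement to land in dimension $NR - D' = \frac{3D}{2}$... that gives the same dimension; instead the Naimark complement of the $\EITFF(\frac D2 \cdot 1, \ldots)$ --- more carefully, I want a $\TFF(D,3,\frac D2)$, so I build an $\ETF$-type or mutually-unbiased-bases object: three mutually unbiased lines when $D=2$, and in general the Naimark complement of three copies of $\calH$ won't work. The reliable route is: a $\TFF(D,3,\frac D2)$ is the Naimark complement of a $\TFF(3\cdot\frac D2 - D, 3, \frac D2) = \TFF(\frac D2, 3, \frac D2)$, which is three copies of $\bbR^{D/2}$ --- trivially a real EITFF --- and Naimark complementation preserves real and equi-isoclinic. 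This settles all four values and their equi-isoclinic/equichordal status.

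The main obstacle I anticipate is making the "the path leaves the positive region" argument airtight rather than hand-wavy: one must verify that for every $R$ strictly inside one of the forbidden subintervals, \emph{some} iterate of the alternating $\nu$-$\sigma$ dynamics has a nonpositive coordinate, and do so uniformly. For $N=3$ this is a finite check on the twelve-point cycle~\eqref{eq.orbit when N=3}, since $(-D,3,-R)$ appears as an explicit orbit element regardless of $(D,R)$ --- so as soon as the starting triple genuinely has a Naimark \emph{and} a spatial complement at each needed step (which is exactly the condition that $R\notin\{\frac D3,\frac D2,\frac{2D}3,D\}$ fails to make some intermediate step degenerate), the contradiction triple is reached. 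The delicate point is that at the boundary values one of the complement operations becomes trivial ($\nu$ or $\sigma$ fixes the point), which is precisely why those values are \emph{not} excluded; so I would phrase the argument as: if a $\TFF(D,3,R)$ exists with $R\notin\{\frac D3,\frac D2,\frac{2D}3,D\}$, then neither $D=R$, $D=2R$ (no, $D = 2R$ is allowed? --- recheck: $2R$ vs the list, $D=2R \iff R = D/2$, which IS in the list, so excluded), $D = 3R$, nor $3R - 2D = R - D$ type degeneracies occur, whence every triple in~\eqref{eq.orbit when N=3} is a valid TFF parameter --- contradicting $(-D,3,-R)$. I would write out this case analysis compactly, deferring the elementary inequality verifications, and I expect the write-up to be short once the degeneracy cases are correctly enumerated.
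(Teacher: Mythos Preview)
Your overall strategy---derive the necessary conditions purely from Naimark and spatial complements together with the basic TFF inequality $R'\le D'\le NR'$---is valid and genuinely different from the paper's argument, which instead uses spectral reasoning: for (a) it reads off the eigenvalues of $\bfP_1=\tfrac{2R}{D}\bfI-\bfP_2$ directly, and for (b) it combines the eigenstructure of $\bfP_1+\bfP_2=\tfrac{3R}{D}\bfI-\bfP_3$ with an interlacing argument to prove the key claim ``either $R=\tfrac D3$ or $R\ge\tfrac D2$,'' and only then invokes one Naimark complement. Your route avoids interlacing entirely at the cost of a short case analysis, which is a fair trade.

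However, your framing of the contradiction is off and, as written, does not close. You argue that when $R$ avoids the special values, no step of the $12$-cycle is ``degenerate,'' hence every triple in~\eqref{eq.orbit when N=3} is a valid TFF parameter, contradicting the appearance of $(-D,3,-R)$. But non-degeneracy (the complement \emph{exists}, i.e.\ $D'\neq NR'$ or $D'\neq R'$) is not the same as the resulting triple satisfying $R'\le D'\le NR'$; in fact you never reach $(-D,3,-R)$ via valid TFF complements. The actual contradiction arrives much sooner: for $\tfrac D3<R<\tfrac D2$ the Naimark complement is a $\TFF(3R-D,3,R)$ with $3R-D<R$; for $\tfrac{2D}3<R<D$ the spatial complement is a $\TFF(D,3,D-R)$ with $D>3(D-R)$; and for $\tfrac D2<R<\tfrac{2D}3$ one spatial complement lands you in the first interval. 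Part (a) is even quicker: a single Naimark complement of a $\TFF(D,2,R)$ with $\tfrac D2<R<D$ already violates $R\le 2R-D$. Once you reframe the argument this way---check the basic inequality after one or two complements, case by case---your proof goes through cleanly. Your constructions for the converse (including the $R=\tfrac D2$, $N=3$ case via the Naimark complement of three copies of $\bbR^{D/2}$) are correct, just in need of tidying.
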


\begin{proof}
Throughout, recall from Section~2 that any $\TFF(D,N,R)$ with $R\in\set{\frac DN, D}$ is necessarily equi-isoclinic,
and conversely that a trivial real $\EITFF(D,N,R)$ exists whenever $(D,N,R)\in\bbZ^3$ satisfies $D>0$, $N>1$ and $R\in\set{\frac DN, D}$.

In light of this fact, for (a) it suffices to show that $R\in\set{\tfrac D2,D}$ for any $\TFF(D,2,R)$.
Here recall from Section~2 that such a TFF equates to two rank-$R$ projections $\bfP_1$ and $\bfP_2$ on a $D$-dimensional Hilbert space that satisfy $\bfP_1+\bfP_2=\tfrac{2R}{D}\bfI$.
Here, $\bfP_1=\tfrac{2R}{D}\bfI-\bfP_2$,
and so $\tfrac{2R}{D}\bfI-\bfP_2$ has $1$ as an eigenvalue,
implying $1\in\set{\tfrac{2R}{D}-1,\tfrac{2R}{D}}$.
Solving for $R$ in both cases indeed gives $R\in\set{\tfrac D2,D}$.

For (b), we begin by proving the following claim:
if a $\TFF(D,3,R)$ exists then either $R=\frac D3$ or $R\geq\frac D2$.
Here, the rank-$R$ projections $\set{\bfP_1,\bfP_2,\bfP_3}$ of any such subspaces $\set{\calU_1,\calU_2,\calU_3}$ satisfy $\bfP_1+\bfP_2+\bfP_3=\tfrac{3R}{D}\bfI$.
Since $\bfP_1+\bfP_2=\frac{3R}{D}\bfI-\bfP_3$,
the operator $\bfP_1+\bfP_2$ has eigenvalues $\frac{3R}{D}-1$ and $\frac{3R}{D}$ with multiplicities $R$ and $D-R$, respectively.
If $\calU_1\subseteq\calU_2^\perp$ then $\bfP_1+\bfP_2$ is the projection onto $\calU_1+\calU_2$,
implying its greatest eigenvalue is $\frac{3R}{D}=1$ and so $R=\tfrac D3$.
If there instead exists $\bfy_1\in\calU_1$, $\bfy_2\in\calU_2$
such that $\ip{\bfy_1}{\bfy_2}\neq0$ then the greatest eigenvalue of $\bfP_1+\bfP_2$ is strictly greater than $1$.
We now exploit a well-known fact from matrix analysis:
the eigenvalues $\set{a_d}_{d=1}^D$ of any self-adjoint operator $\bfA:\calH\rightarrow\calH$ obtained by adding a rank-$1$ projection to any other self-adjoint operator $\bfB:\calH\rightarrow\calH$ \textit{interlace} with the eigenvalues $\set{b_d}_{d=1}^D$ of $\bfB$,
that is,
\begin{equation*}
a_1\geq b_1\geq a_2\geq b_2\geq\dotsc\geq a_{d-1}\geq b_{d-1}\geq a_d\geq b_d.
\end{equation*}
In particular, in our current situation where the greatest eigenvalue of $\bfP_1+\bfP_2$ is strictly greater than that of $\bfP_1$,
its multiplicity $D-R$ is at most the rank of $\bfP_2$, namely $R$.
Thus, $D-R\leq R$ and so $R\geq \frac D2$, as claimed.

Having the claim, note that if a $\TFF(D,3,R)$ exists then either $R\in\set{\frac D3,\frac D2,D}$ or $\frac D2<R<D$.
In the latter case this TFF's Naimark complement is a $\TFF(D,3,D-R)$ where $D-R<D-\frac D2=\tfrac D2$ and so, by the claim, $D-R=\frac D3$, that is, $R=\tfrac{2D}3$.
Moreover,
as mentioned at the beginning of this proof, any $\TFF(D,3,R)$ with $R\set{\frac D3,D}$ is necessarily equi-isoclinic.
This also holds when $R=\frac D2$: any $\TFF(D,3,\frac D2)$ is the Naimark complement of a $\TFF(\frac D2,3,\frac D2)$, which has equal ``$D$" and ``$R$" parameters and so is necessarily equi-isoclinic.
Meanwhile, any $\TFF(D,3,R)$ with $R=\frac{2D}3$ cannot be equi-isoclinic since $\frac D2<R<D$ but, as noted above, is the spatial complement of a (necessarily equi-isoclinic and so also equichordal) $\TFF(D,3,\frac D3)$.
Conversely, a trivial real $\TFF(D,3,R)$ exists whenever $R\in\set{\frac D3,D}$,
and taking the Naimark complement of a trivial real $\TFF(\frac D2,3,\frac D2)$ or the spatial complement of a trivial real $\TFF(D,3,\frac D3)$ yields real $\TFF(D,3,R)$ with $R=\frac D2$ and $R=\frac{2D}3$, respectively.
\end{proof}

From this result, we see that only ever at most a small portion of an orbit of the form~\eqref{eq.orbit when N=2} or~\eqref{eq.orbit when N=3} will correspond to actual TFFs.
In particular, the parameters of any $\TFF(D,2,R)$ appear in one of two types of
``Naimark-spatial paths" of length two:
\begin{equation}
\label{eq.NS path N=2}
(R,2,R)
\underset{\rmN}{\leftrightarrow}
(R,2,R),
\qquad
(2R,2,R)
\underset{\rms}{\leftrightarrow}
(2R,2,R).
\end{equation}
(A $\TFF(R,2,R)$ has no spatial complement, while a $\TFF(2R,2,R)$ has no Naimark complement.)
Similarly, the parameters of any $\TFF(D,3,R)$ appear in a path of length four, either of the form
\begin{equation}
\label{eq.NS path N=3 case 1}
(3R,3,R)
\underset{\rms}{\leftrightarrow}
(3R,3,2R)
\underset{\rmN}{\leftrightarrow}
(3R,3,2R)
\underset{\rms}{\leftrightarrow}
(3R,3,R)
\end{equation}
when $R\in\set{\tfrac D3,\tfrac{2D}3}$
(where a $\TFF(3R,3,R)$ has no Naimark complement),
or of the form
\begin{equation}
\label{eq.NS path N=3 case 2}
(R,3,R)
\underset{\rmN}{\leftrightarrow}
(2R,3,R)
\underset{\rms}{\leftrightarrow}
(2R,3,R)
\underset{\rmN}{\leftrightarrow}
(R,3,R)
\end{equation}
when $R\in\set{\tfrac D2,D}$
(where a $\TFF(R,3,R)$ has no spatial complement).
All in all, we see that the orbit formalism of Definition~\ref{def.orbit} is not particularly helpful in the study of $\TFF(D,N,R)$ with $N<4$.
The same will be true for $\TFF(D,N,R)$ with $f(D,N,R)=0$.
However, orbits will prove invaluable in the study of those with either $f(D,N,R)>0$ or $N\geq 4$ and $f(D,N,R)<0$.

\subsection{Naimark-spatial orbits when $f(D,N,R)=0$}

As we now explain, $\TFF(D,N,R)$ with $f(D,N,R)=0$ are rare:

\begin{theorem}
\label{thm.f=0}
If $f(D,N,R):=DNR-D^2-NR^2=0$ for some $(D,N,R)\in\bbZ^3$ with $D>0$, $N>1$ and $R>0$ then $N=4$ and $D=2R$.
In this case, $\Orb(D,N,R)$ is a singleton set.
\end{theorem}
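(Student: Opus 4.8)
The plan is to rewrite the hypothesis in a form that exposes an arithmetic obstruction. Completing the square in $D$, the equation $DNR - D^2 - NR^2 = 0$ is equivalent to
\begin{equation*}
(2D - NR)^2 = NR^2(N - 4).
\end{equation*}
When $N \in \set{2,3}$ the right-hand side is negative while the left-hand side is a square, so no such triple exists; this rules out $N < 4$ outright. When $N = 4$ the right-hand side vanishes, forcing $2D = 4R$, i.e., $D = 2R$, as claimed. So the crux is to eliminate $N \geq 5$.

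For $N \geq 5$, the displayed identity expresses $N(N-4)$ as the square of the rational number $\frac{2D - NR}{R}$; being a nonnegative integer that is a rational square, $N(N-4)$ must then be a perfect square. I would sandwich it between consecutive squares: since $N \geq 5$ gives $2N > 9$, we have $(N-3)^2 = N^2 - 6N + 9 < N^2 - 4N = N(N-4)$, while $N(N-4) = N^2 - 4N < N^2 - 4N + 4 = (N-2)^2$ for every $N$. Thus $(N-3)^2 < N(N-4) < (N-2)^2$, so $N(N-4)$ lies strictly between two consecutive perfect squares and cannot itself be one --- a contradiction. Hence $N = 4$ and $D = 2R$.

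For the singleton assertion I would simply verify that both defining involutions fix the triple $(D,N,R) = (2R,4,R)$: indeed $\nu(2R,4,R) = (4R - 2R, 4, R) = (2R,4,R)$ and $\sigma(2R,4,R) = (2R,4,2R - R) = (2R,4,R)$. Since $\Orb(D,N,R)$ is the orbit of $(D,N,R)$ under the group generated by $\nu$ and $\sigma$, and each generator fixes this point, the orbit equals $\set{(D,N,R)}$.

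I do not anticipate a genuine obstacle here. The only mildly delicate point is the elementary fact that a positive integer lying strictly between two consecutive perfect squares is not a perfect square, which is immediate; as an alternative one could instead factor $(N-2)^2 - N(N-4) = 4$ and solve $(m-k)(m+k) = 4$ over the nonnegative integers, but the sandwich argument is shorter and avoids casework.
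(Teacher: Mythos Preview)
Your proof is correct and follows essentially the same approach as the paper: both reduce the $N>4$ case to the impossibility of $N(N-4)$ being a perfect square, and both verify the orbit claim by checking that $\nu$ and $\sigma$ fix $(2R,4,R)$. Your completing-the-square identity $(2D-NR)^2=NR^2(N-4)$ is a more self-contained packaging than the paper's quadratic-form/slope language, and your sandwich $(N-3)^2<N(N-4)<(N-2)^2$ is the paper's ``only $0$ and $4$ are perfect squares differing by $4$'' argument in a different dress (indeed you note that alternative yourself).
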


\begin{proof}
Recall that for any positive integer $N$,
quadratic form $f_N$ of~\eqref{eq.quadratic form} arises from a negative definite matrix whenever $N<4$.
Here since $(D,N,R)\in\bbZ^3$ satisfies $f_N(D,R)=f(D,N,R)=0$ where $D>0$, $N>1$ and $R>0$ we thus have $N\geq 4$.
By the quadratic formula, the zero set of $f_N$ is the union of the two lines in the $(D,R)$-plane with slopes \smash{$\tfrac12[1\pm(\tfrac{N-4}{N})^{\frac12}]$}.
Here if $N>4$ then $(\tfrac{N-4}{N})^{\frac12}=\tfrac1{N^2}[N(N-4)]^{\frac12}$ is irrational since $N(N-4)=(N-2)^2-4$ whereas $4$ and $0$ are the only perfect squares whose difference is $4$.
This contradicts the fact that both $D$ and $R$ are positive integers.
Thus, $N=4$, and the two aforementioned lines are equal, both having slope $\frac12$.
Altogether, we see that $(D,N,R)=(2R,4,R)$.
Moreover, any such point is fixed by both $\nu$ and $\sigma$:
\begin{equation*}
\nu(2R,4,R)=(4R-2R,4,R)=(2R,4,R),
\quad
\sigma(2R,4,R)=(2R,4,2R-R)=(2R,4,R).
\end{equation*}
Thus, $\Orb(D,N,R)=\Orb(2R,4,R)=\set{(2R,4,R)}$ is a singleton set.
\end{proof}

From this result, we see that if a (real and/or equichordal and/or equi-isoclinic) $\TFF(D,N,R)$ with $f(D,N,R)=0$ exists,
then both its Naimark and spatial complements exist,
and are themselves (real and/or equichordal and/or equi-isoclinic) $\TFF(D,N,R)$.
Later on in Theorem~\ref{thm.(2R,4,R)}, we combine results from the existing literature with some new analysis to fully characterize when a (real and/or equichordal and/or equi-isoclinic) $\TFF(2R,4,R)$ exists:
a (complex) $\EITFF(2R,4,R)$ and real $\ECTFF(2R,4,R)$ exist for any positive integer $R$, but a real $\EITFF(2R,4,R)$ exists if and only if $R$ is even.

\subsection{Naimark-spatial orbits when $N\geq 4$ and $f(D,N,R)<0$}

When $N\geq 4$ and $C<0$, a level set of the form $\set{(D,R)\in\bbR^2: f_N(D,R)=C}$ consists of the two connected components of either a hyperbola or a union of two parallel lines, depending on whether $N>4$ or $N=4$, respectively.
Applying $\nu$ or $\sigma$ to any $(D,N,R)$ for which $f_N(D,R)=C$ corresponds to jumping horizontally or vertically, respectively, from one of these two connected components to the other.
Applying $\nu$ and $\sigma$ in an iterative, alternating fashion yields an infinite ``staircase" of $(D,R)$ pairs of arbitrarily large positive and negative values.
Clearly, those pairs in this staircase with a nonpositive entry do not correspond to the parameters of any $\TFF(D,N,R)$.
Remarkably, as we now explain,
the remaining ``positive pairs" in this staircase do correspond to TFFs if and only if the minimal such pair $(D_0,R_0)$ equates to a trivial $\TFF(D_0,N,R_0)$, namely if and only if either $D_0=R_0$ or $D_0=NR_0$.

\begin{theorem}
\label{thm.f<0}
If $f(D,N,R):=DNR-D^2-NR^2<0$ for some $(D,N,R)\in\bbZ^3$ with $D>0$, $N\geq 4$ and $R>0$ then $\Orb^{+}(D,N,R):=\set{(D',N,R')\in\Orb(D,N,R): D'>0, R'>0}$ is an infinite proper subset of $\Orb(D,N,R)$,
and contains no $(D_0,N,R_0)$ that satisfies~\eqref{eq.minimal},
but does contain a unique point $(D_0,N,R_0)$ such that $D_0\leq D'$ and $R_0\leq R'$ for all $(D',N,R')\in\Orb^{+}(D,N,R)$.
\smallskip

If $R_0\notin\set{\frac{D_0}N,D_0}$ then no $\TFF(D',N,R')$ exists for any $(D',N,R')\in\Orb(D,N,R)$.
If instead $R_0\in\set{\frac{D_0}N,D_0}$ then a $\TFF(D_0,N,R_0)$ exists,
and $\Orb^+(D,N,R)$ is the set of $(D',N,R')$ for which there exists a $\TFF(D',N,R')$ that can be obtained from it via iterated alternating Naimark and spatial complements.
In this case, any such $\TFF(D',N,R')$ is necessarily equichordal, but can only be  equi-isoclinic if $R'=R_0$ and $D'\in\set{D_0,NR_0-D_0}$.
Moreover, in this case $(D_0,N,R_0)$ is the only point $(D',N,R')\in\Orb^{+}(D,N,R)$ with $R'\in\set{\frac{D'}N,D'}$.
\end{theorem}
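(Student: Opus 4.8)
The plan is to pin the whole Naimark--spatial sequence onto the level set $\calL:=\set{(D,R)\in\bbR^2:f_N(D,R)=C}$, where $C:=f(D,N,R)<0$; by the discussion preceding the theorem this set has two connected components, call them branch $1$ and branch $2$. First I would record the closed forms obtained by completing the square in $f_N$ and solving $f_N=C$ as a quadratic in $D$ (resp.\ $R$): $D_\pm(R)=\tfrac N2R\pm[\tfrac{N(N-4)}4R^2-C]^{1/2}$ with $D_++D_-=NR$, and $R_\pm(D)$ with $R_++R_-=D$, where branch $1$ is the locus $\set{D=D_+(R)}=\set{R=R_-(D)}$ and branch $2$ the other. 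Short computations then give the facts I will use repeatedly: on branch $1$ one always has $D>\tfrac N2R$ and $D>2R$; on branch $2$ one always has $R>\tfrac D2$ and $D<2R$, and there $R<D\iff R>\sqrt{-C}$; and $f_N$ equals $C$ nowhere on the lines $D=\tfrac N2R$, $R=\tfrac D2$, $D=2R$, and at exactly one point of $\calL$ on each of $D=NR$ and $D=R$, namely the ``trivial points'' $P_1:=(\sqrt{-CN},\sqrt{-C/N})$ (on branch $1$) and $P_2:=(\sqrt{-C},\sqrt{-C})$ (on branch $2$). The branch-$1$/branch-$2$ inequalities already yield the ``no point satisfies~\eqref{eq.minimal}'' claim: a point of $\calL$ with positive entries lies on branch $1$ (where $2D>NR$ contradicts $R\geq\tfrac{2D}N$) or on branch $2$ (where $2R>D$ contradicts $R\leq\tfrac D2$).

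Next I would run the dynamics. Each of $\nu,\sigma$ interchanges the two branches ($\nu$ fixes $R$ and swaps the two $D$-values, $\sigma$ fixes $D$ and swaps the two $R$-values), and since $D\ne\tfrac N2R$ and $R\ne\tfrac D2$ everywhere on $\calL$ every step strictly changes one coordinate by a nonzero integer. Tracing the branch pattern, in exactly one of the two directions of the sequence every $\nu$-step is taken from a branch-$1$ point (so $D$, being then the larger of the two $D$-values, strictly decreases) and every $\sigma$-step from a branch-$2$ point (so $R$ strictly decreases); orienting so this is the $K$-increasing direction, $K\mapsto D^{(K)}+R^{(K)}$ is strictly decreasing, $D^{(K)}$ and $R^{(K)}$ are non-increasing, and both tend to $+\infty$ as $K\to-\infty$. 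Since an infinite strictly decreasing sequence of positive integers is impossible, some coordinate becomes $\le 0$ for large $K$; a two-case check of the first such step (a $\nu$-step leaves $D\le0$, and the ensuing $\sigma$-step preserves this; dually for a $\sigma$-step) shows there are no re-entries. Hence $\Orb^+(D,N,R)=\set{x_K:K\le K_1}$ for some $K_1$, where $x_K:=(D^{(K)},N,R^{(K)})$: it is infinite (its $K\to-\infty$ tail has positive entries), proper (its $K\to+\infty$ tail does not), and its endpoint $x_{K_1}=:(D_0,N,R_0)$ minimizes both $D$ and $R$ over $\Orb^+$, such a minimizing triple being automatically unique. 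This settles the first paragraph.

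For the second paragraph I would use two propagation facts that are immediate from $\Orb^+=\set{x_K:K\le K_1}$: for $K<K_1$, both $x_K$ and $x_{K+1}$ have positive entries, so the $\nu$- or $\sigma$-step between them is an honest Naimark resp.\ spatial complement (the relevant $D\ne NR$ resp.\ $D\ne R$ holds), whence a $\TFF$ exists at $x_K$ if and only if one exists at $x_{K+1}$. Thus any $\TFF$ with parameters anywhere in $\Orb(D,N,R)$ can be propagated down to a $\TFF(D_0,N,R_0)$, forcing $R_0\le D_0\le NR_0$; then inspecting the step $x_{K_1}\to x_{K_1+1}$, which must leave the positive orthant, forces $D_0=NR_0$ or $D_0=R_0$ --- the contrapositive of the first assertion. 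If instead $R_0\in\set{\tfrac{D_0}N,D_0}$, then $\TFF(D_0,N,R_0)$ is one of the two trivial (hence real equi-isoclinic) TFFs of Section~2, and propagating up along the orbit produces a $\TFF(x_K)$ for every $K\le K_1$; since Naimark and spatial complements preserve equichordality (and, being involutions up to equivalence, transfer it back), and since every $\TFF$ with parameters in $\Orb^+$ connects down to the trivial --- hence equichordal --- $\TFF(D_0,N,R_0)$, all of them are equichordal, and $\Orb^+$ is exactly the set of parameters reachable from that trivial TFF.

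The remaining, and most delicate, point is equi-isoclinicity. The crux is that every branch-$2$ point of $\Orb^+$ other than $x_{K_1}$ satisfies $R<D<2R$: on branch $2$ this is equivalent to $R>\sqrt{-C}$, the unique point of $\calL$ with $R=\sqrt{-C}$ is $P_2$ (which lies in $\Orb^+$ only when $P_2=x_{K_1}$, i.e.\ when $D_0=R_0$), and $R^{(K)}$ is non-decreasing as one moves up the orbit from $x_{K_1}$, so every other branch-$2$ point has $R>\sqrt{-C}$. By Section~2 a $\TFF(D',N,R')$ with $R'<D'<2R'$ is never equi-isoclinic. Since branches alternate along the orbit and $\nu$ swaps them, every branch-$1$ point of $\Orb^+$ except possibly $x_{K_1-1}$ (the Naimark neighbour of $x_{K_1}$, relevant when $D_0=R_0$) is the Naimark complement of a branch-$2$ point of $\Orb^+$ other than $x_{K_1}$, and Naimark complementation preserves non-equi-isoclinicity; hence the only members of $\Orb^+$ that can carry an equi-isoclinic TFF are $x_{K_1}$ and, when $D_0=R_0$, its Naimark neighbour --- i.e.\ those with $R'=R_0$ and $D'\in\set{D_0,NR_0-D_0}$. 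Finally, $P_1$ and $P_2$ are the only points of $\calL$ with $R'\in\set{\tfrac{D'}N,D'}$, exactly one of them is $x_{K_1}$, and the other has a coordinate strictly smaller than the corresponding coordinate of $x_{K_1}$ (its $R$ is smaller when $x_{K_1}=P_2$, its $D$ when $x_{K_1}=P_1$), so by minimality of $x_{K_1}$ it cannot lie in $\Orb^+$; thus $x_{K_1}=(D_0,N,R_0)$ is the unique member of $\Orb^+$ with $R'\in\set{\tfrac{D'}N,D'}$. I expect the equi-isoclinic bookkeeping to be the main obstacle; what makes it tractable is that, off the two trivial points, the orbit is confined to branch $1$ (where $D>2R$) and branch $2$ (where, away from $P_2$, $R<D<2R$ so equi-isoclinicity is outright impossible), so a single application of Naimark invariance carries the branch-$2$ conclusion over to branch $1$.
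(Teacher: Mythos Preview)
Your proposal is correct and follows essentially the same strategy as the paper's proof: both hinge on the dichotomy that every point of the level set satisfies exactly one of $(2D>NR,\,D>2R)$ or $(2D<NR,\,D<2R)$, derive global monotonicity of the Naimark--spatial sequence from it, locate the unique endpoint of $\Orb^+$, propagate TFF existence along the orbit, and rule out equi-isoclinicity away from the endpoint via the ``$R<D<2R$ forbids EI'' fact from Section~2. The differences are purely presentational: you package the dichotomy as an explicit two-branch parametrization $D_\pm(R),R_\pm(D)$ with distinguished trivial points $P_1,P_2$, whereas the paper extracts it directly from the one-line identity $(NR-2D)(D-2R)=f(D,N,R)-(D-2R)^2-(N-4)R^2<0$; and for the EI step you argue ``branch~2 is forbidden, then Naimark-transfer to branch~1,'' while the paper first passes to the Naimark complement with $D''\le NR'-D''$ and then invokes the dichotomy to get $D''<2R'$ --- logically the same move in reverse order.
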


\begin{proof}
We begin by claiming that if $(D,N,R)\in\bbR^3$ satisfies $N\geq 4$ and $f(D,N,R)<0$ then either
\begin{equation}
\label{eq.no minimal}
(D<NR-D\text{ and }R>D-R)
\text{ or }
(D>NR-D\text{ and }R<D-R).
\end{equation}
Indeed, since $N\geq 4$ and $f(D,N,R)<0$,
\begin{align*}
(NR-2D)(D-2R)
&=DNR-2D^2-2NR^2+4DR\\
&=(DNR-D^2-NR^2)-(D^2-4DR+NR^2)\\
&=f(D,N,R)-(D-2R)^2-(N-4)R^2\\
&<0,
\end{align*}
and so one of the two quantities $NR-2D$ and $D-2R$ is positive while the other is negative.
Now fix any $(D,N,R)\in\bbZ^3$ with $N\geq 4$ and $f(D,N,R)<0$.
By applying $\nu$ if necessary,
we assume without loss of generality that $D\leq NR-D$,
and so by~\eqref{eq.no minimal} that $D<NR-D$.
That is, we assume that the Naimark-spatial sequence $\set{(D^{(K)},N,R^{(K)})}_{K=-\infty}^{\infty}$ of $(D,N,R)$ (see Definition~\ref{def.orbit}) satisfies $D^{(0)}<D^{(1)}$.
(Replacing $(D,N,R)$ with $\nu(D,N,R)$ shifts and reverses $\set{(D^{(K)},N,R^{(K)})}_{K=-\infty}^{\infty}$,
but leaves $\Orb(D,N,R)=\set{(D^{(K)},N,R^{(K)}): K\in\bbZ}$ unchanged.)
Since $f(D^{(K)},N,R^{(K)})=f(D,N,R)<0$ for all $K$,
combining this assumption with~\eqref{eq.no minimal} gives that
\begin{equation*}
D^{(2J-1)}=D^{(2J)}<D^{(2J+1)},
\quad
R^{(2J-1)}<R^{(2J)}=R^{(2J+1)},
\quad
\forall J\in\bbZ.
\end{equation*}
(We leave a formal inductive proof to the interested reader.)
In particular, both \smash{$\set{D^{(K)}}_{K=-\infty}^{\infty}$} and \smash{$\set{R^{(K)}}_{K=-\infty}^{\infty}$} are nondecreasing and unbounded both above and below.
Now define
\begin{equation*}
K_0:=\min\set{K: D^{(K)}>0\text{ and }R^{(K)}>0},
\quad
D_0:=D^{(K_0)},
\quad
R_0:=R^{(K_0)}.
\end{equation*}
Clearly,
$\Orb^{+}(D,N,R)
:=\set{(D',N,R')\in\Orb(D,N,R): D'>0, R'>0}$ can be reexpressed as
$\Orb^{+}(D,N,R)=\set{(D^{(K)},N,R^{(K)}): K\geq K_0}$,
and so is an infinite proper subset of $\Orb(D,N,R)$.
It is also clear that $(D_0,N,R_0)\in\Orb^{+}(D,N,R)$ satisfies $D_0\leq D'$ and $R_0\leq R'$ for all $(D',N,R')\in\Orb^{+}(D,N,R)$,
and that such a point is necessarily unique.
We caution however that $(D_0,N,R_0)$ is not ``minimal" in the sense of~\eqref{eq.minimal}:
indeed, for any $(D_0,N,R_0)\in\Orb(D,N,R)$ we have $N\geq 4$ and $f(D_0,N,R_0)=f(D,N,R)<0$, implying via~\eqref{eq.no minimal} a contradiction of~\eqref{eq.minimal}.

Continuing, note that since
$(D_0,N,R_0)$ lies in $\Orb^{+}(D,N,R)$ while
$(D^{(K_0-1)},N,R^{(K_0-1)})$ does not,
we have both $D_0>0$ and $R_0>0$
while either $D^{(K_0-1)}\leq 0$ or $R^{(K_0-1)}\leq 0$.
The ramifications of this fact depend on whether $K_0$ is even or odd.
If $K_0$ is even,
\begin{equation*}
(D^{(K_0-1)},N,R^{(K_0-1)})
=\sigma(D^{(K_0)},N,R^{(K_0)})
=\sigma(D_0,N,R_0)
=(D_0,N,D_0-R_0),
\end{equation*}
implying $D^{(K_0-1)}=D_0>0$ and so
$D_0-R_0
=R^{(K_0-1)}
\leq 0$.
If instead $K_0$ is odd,
\begin{equation*}
(D^{(K_0-1)},N,R^{(K_0-1)})
=\nu(D^{(K_0)},N,R^{(K_0)})
=\nu(D_0,N,R_0)
=(NR_0-D_0,N,R_0),
\end{equation*}
implying $R^{(K_0-1)}=R_0>0$ and so
$NR_0-D_0
=D^{(K_0-1)}
\leq 0$.
Thus, in general, either $D_0\leq R_0$ or $NR_0\leq D_0$.
Note that both of these inequalities cannot hold simultaneously:
since $R_0>0$, having $NR_0\leq D_0\leq R_0$ implies $N\leq 1$, a contradiction.
In particular, any $\TFF(D_0,N,R_0)$ has either a Naimark or spatial complement but not the other.

Note that if a $\TFF(D_0,N,R_0)$ exists then the parameters of any $\TFF(D',N,R')$ constructed from it via iterated alternating Naimark and spatial complements satisfy $D'>0$, $R'>0$ and $(D',N,R')\in\Orb(D,N,R)$, that is, $(D',N,R')\in\Orb^+(D,N,R)$.
Conversely, if a $\TFF(D_0,N,R_0)$ exists then for any $(D',N,R')\in\Orb^+(D,N,R)$,
a $\TFF(D',N,R')$ can be constructed from it in this way.
To be precise, writing $(D',N,R')=(D^{(K_1)},N,R^{(K_1)})$ where $K_1\geq K_0$,
a $\TFF(D',N,R')$ arises from a $\TFF(D_0,N,R_0)$ via $K_1-K_0$ such complements in total,
beginning with a Naimark complement when $K_0$ is even and with a spatial complement when $K_0$ is odd.
Note that by the same logic,
if a $\TFF(D',N,R')$ exists for some $(D',N,R')\in\Orb^+(D,N,R)$ then a $\TFF(D_0,N,R_0)$ can be constructed from it by applying these same complements in the reverse order.

Now note that if $R_0\notin\set{\frac{D_0}N,D_0}$ then since $D_0\leq R_0$ or $NR_0\leq D_0$ in general, either $D_0<R_0$ or $NR_0<D_0$.
In either case, recall from Section~2 that a $\TFF(D_0,N,R_0)$ does not exist.
This in turn implies that no $\TFF(D',N,R')$ exists for any $(D',N,R')\in\Orb(D,N,R)$:
such a TFF is nonsensical if either $D'\leq 0$ or $R'\leq 0$,
while if $D'>0$ and $R'>0$ then $(D',N,R')\in\Orb^+(D,N,R)$, and so the existence of a $\TFF(D',N,R')$ would imply that a $\TFF(D_0,R,N_0)$ exists, a contradiction.

If instead $R_0\in\set{\frac{D_0}N,D_0}$, as we assume to be the case for the remainder of this proof, recall from Section~2 that a $\TFF(D_0,N,R_0)$ exists, and that any such TFF is necessarily equi-isoclinic.
In this case, as already noted above, $\Orb^+(D,N,R)$ is the set of $(D',N,R')$ for which there exists a $\TFF(D',N,R')$ that can be obtained from it via iterated alternating Naimark and spatial complements,
and so any such $\TFF(D',N,R')$ is necessarily equichordal.

Next, we show that if $(D',N,R')\in\Orb^+(D,N,R)$ satisfies $R'\in\set{\frac{D'}N,D'}$ then necessarily $(D',N,R')=(D_0,N,R_0)$.
If $D'=NR'$ then both $\sigma(D',N,R')=(NR',N,(N-1)R')$ and $(D',N,R')$ have all positive entries while $\nu(D',N,R')=(0,N,R')$ does not.
In this case,
$(D',N,R')$ is necessarily the minimally-indexed member of  $\set{(D^{(K)},N,R^{(K)})}_{K=-\infty}^{\infty}$ that has all-positive entries,
that is, $(D',N,R')=(D^{(K_0)},N,R^{(K_0)})=(D_0,N,R_0)$.
Similarly, if $D'=R'$ then both $\nu(D',N,R')=((N-1)R',N,R')$ and $(D',N,R')$ have all positive entries while $\sigma(D',N,R')=(R',N,0)$ does not,
and so $(D',N,R')=(D^{(K_0)},N,R^{(K_0)})=(D_0,N,R_0)$.

To conclude, fix any $(D',N,R')\in\Orb^{+}(D,N,R)$ for which an $\EITFF(D',N,R')$ exists.
We will show that $R'=R_0$ and $D'\in\set{D_0,NR_0-D_0}$.
If $D'=NR'$, then as noted above, $(D',N,R')=(D_0,N,R_0)$ and so $R'=R_0$ and $D'=D_0\in\set{D_0,NR_0-D_0}$.
If instead $D'<NR'$ then this $\EITFF(D',N,R')$ has a Naimark complement,
and at least one of these two Naimark complementary TFFs is an $\EITFF(D'',N,R')$ where $D''\in\set{D',NR'-D'}$ and $D''\leq NR'-D''$.
In fact, since $(D'',N,R')\in\Orb(D,N,R)$ we have $f(D'',N,R')=f(D,N,R)<0$ and so~\eqref{eq.no minimal} gives $D''<NR'-D''$ and $R'>D''-R'$.
Now recall from Section~2 that any such $\EITFF(D'',N,R')$ with $D''<2R'$ necessarily has $D''=R'$.
Since $(D'',N,R')\in\Orb^{+}(D,N,R)$ satisfies $D''=R'$ recall from above that $(D'',N,R')=(D_0,N,R_0)$.
Thus, $R'=R_0$ and $D_0=D''\in\set{D',NR'-D'}=\set{D',NR_0-D'}$,
that is, $D'\in\set{D_0,NR_0-D_0}$.
\end{proof}

Taken together, Theorems~\ref{thm.N=2,3} and~\ref{thm.f<0} fully characterize the existence of all (real and/or equichordal and/or equi-isoclinic) $\TFF(D,N,R)$ with $f(D,N,R)<0$.
We caution that some of the nonexistence implications of Theorem~\ref{thm.f<0} are subtle, ruling out the existence of certain $\TFF(D,N,R)$ that satisfy the basic requirement that $R\leq D\leq NR$.
For example,
no $\TFF(7,4,2)$ exists despite the fact that $2\leq 7\leq(4)(2)$,
since if one did then its Naimark complement would be a $\TFF(1,4,2)$ where $1<2$.
Similarly, no $\TFF(5,4,4)$ exists despite the fact that $4\leq 5\leq(4)(4)$ since if one did then its spatial complement would be a $\TFF(5,4,1)$ where $5>(4)(1)$.
Such concerns lie at the heart of the TFF existence test of~\cite{CasazzaFMWZ11}.
Here, we have bypassed this issue:
having $f(D,N,R)<0$ neither implies that a $\TFF(D,N,R)$ exists nor that it does not,
but in either case, it does imply that existence problem is already settled.
In Section~5, we briefly revisit the TFF existence test of~\cite{CasazzaFMWZ11} from this new perspective.

\subsection{Naimark-spatial orbits when $f(D,N,R)>0$}

Due to the importance of this case in the study of still-open problems regarding the existence of (real and/or equichordal and/or equi-isoclinic) TFFs, we begin with an example.
For the aforementioned $\EITFF(3,7,1)$ considered in~\eqref{eq.(3,7,1) orbit},
\begin{equation*}
f_7(3,1)
=f(3,7,1)
=(3)(7)(1)-(3)^2-(7)(1)^2
=5
>0.
\end{equation*}
In this case the corresponding level set $\set{(D,R)\in\bbR^2: f_7(D,R)=7DR-D^2-7R^2=5}$ of the quadratic form $f_7$ is a hyperbola that contains $(3,1)$.
As depicted in Figure~\ref{fig.(7,3,1) orbit},
one of the two connected components of this hyperbola is contained in the subset $\set{(D,R)\in\bbR^2: 0<R<D<7R}$ of the first quadrant of the $(D,R)$-plane.
(Its other connected component is the negation of this one.)
As such, any $\ECTFF(D,7,R)$ whose $(D,R)$ parameters lie on this hyperbola has both a Naimark and spatial complement.
Their $(D',R')$ parameters are obtained by moving horizontally or vertically, respectively, from $(D,R)$ to another point on this hyperbola.
Interestingly, the path from $(3,1)$ to $(4,1)$ to $(4,3)$ to $(17,3)$, etc.,  (alternating complements, beginning with Naimark)
``interweaves" with that from $(3,1)$ to $(3,2)$ to $(11,2)$ to $(11,9)$, etc.,
(alternating complements, beginning with spatial).
From this graph, it is clear that $\Orb(3,7,1)$ is infinite and moreover contains a unique point $(D_0,7,R_0)$ such that $D_0\leq D$ and $R_0\leq R$ for all $(D,7,R)\in\Orb(3,7,1)$, namely $(D_0,7,R_0)=(3,7,1)$.
It is also clear that $(3,7,1)$ is also the only point of $\Orb(3,7,1)$ that lies both below and to the left of its spatial and Naimark-complementary cousins,
namely that satisfies~\eqref{eq.minimal}.
In the next result, we formally state and prove these and other claims in general.

\begin{figure}[htb]

\pgfplotsset{every axis/.append style={
                    axis x line=left,    
                    axis y line=left,    
                    axis line style={->}, 
                    xlabel={$D$},          
                    ylabel={$R$},          
                    }}

\tikzset{>=stealth}

\tikzmath{
    \DMax = 20;
    \RMax = 20;
    \D    = 3;
    \N    = 7;
    \R    = 1;
    \C    = \D*\N*\R-\D^2-\N*\R^2;
    \DMin = sqrt((4*\C)/(\N-4));
    \Samples = 2^10;
    }

\begin{center}
\begin{tikzpicture}
    \begin{axis}[
        xmin=0,
        xmax=\DMax,
        ymin=0,
        ymax=\RMax,
        clip=false]
        \addplot [only marks,mark=*] coordinates {
            (17,14)
            (17,3)
            (4,3)
            (4,1)
            (3,1)
            (3,2)
            (11,2)
            (11,9)};
        \addplot [dotted] coordinates {
            (\DMax,14)
            (17,14)
            (17,3)
            (4,3)
            (4,1)
            (3,1)
            (3,2)
            (11,2)
            (11,9)
            (\DMax,9)};
        \addplot[dashed, domain=0:\DMax] {x} node[pos=1, right] {$R=D$};
        \addplot[dashed, domain=0:\DMax] {x*(1/\N)} node[pos=1, below right] {$R=\frac DN$};
        \addplot[dashed, domain=0:\DMax] {x*(1/2+sqrt(1/4-1/\N))} node[pos=1, right] {$R=[\tfrac12+(\tfrac14-\tfrac1N)^{\frac12}]D$};
        \addplot[dashed, domain=0:\DMax] {x*(1/2-sqrt(1/4-1/\N))} node[pos=1, right] {$R=[\tfrac12-(\tfrac14-\tfrac1N)^{\frac12}]D$};
        \addplot[solid, domain=\DMin:\DMax,samples=\Samples] {x/2+sqrt( (x/2)^2-((x^2+\C)/(\N)) )};
        \addplot[solid, domain=\DMin:\DMax,samples=\Samples] {x/2-sqrt( (x/2)^2-((x^2+\C)/(\N)) )};
    \end{axis}
\end{tikzpicture}
\end{center}

\caption{\label{fig.(7,3,1) orbit}
The Naimark-spatial orbit of an $\ETF(3,7)$, regarded as an $\EITFF(3,7,1)$.
Each node indicates the $(D,R)$ parameters of an $\ECTFF(D,7,R)$ that can be obtained from the $\EITFF(3,7,1)$ via iterated alternating Naimark and spatial complements; see~\eqref{eq.(3,7,1) orbit}.
The hyperbola on which they lie is a level set of the quadratic form
$f_7(D,R)=f(D,7,R)=7DR-D^2-7R^2$ of~\eqref{eq.quadratic form} at ``elevation" $f_7(3,1)=5>0$.
Here, since the function $f$ of~\eqref{eq.invariant} is invariant under the Naimark and spatial involutions of Definition~\ref{def.orbit},
taking a Naimark or spatial complement of one of these ECTFFs corresponds to traversing horizontally or vertically, respectively, from one node on this hyperbola to another.
Since $f_7(3,1)=5>0$, this component of this hyperbola lies between the lines with slopes \smash{$\tfrac12[1\pm(\tfrac{N-4}{N})^{\frac12}]$} that form the zero set of this quadratic form.
These two slopes are themselves between $\frac1N$ and $1$,
meaning any $\ECTFF(D,7,R)$ whose $(D,R)$ parameters lie on this hyperbola has both a Naimark and spatial complement.
From this graph, it is intuitively obvious that this orbit is infinite and contains exactly one point $(D_0,R_0)=(3,1)$ that lies both below and to the left of its nearest neighbors, that is, is minimal in the sense of~\eqref{eq.minimal}.
This point is also minimal in the sense that both $D_0\leq D$ and $R_0\leq R$ for all other nodes $(D,R)$.
To be clear, however, there are points on this component of this hyperbola with lesser values of $D$ and $R$.
In Theorem~\ref{thm.f>0}, we verify that these phenomena occur in general whenever $f(D,N,R)>0$.
In Section~5, we exploit this theory of minimal points to certify the novelty of certain ECTFFs constructed in Section~4.
}
\end{figure}
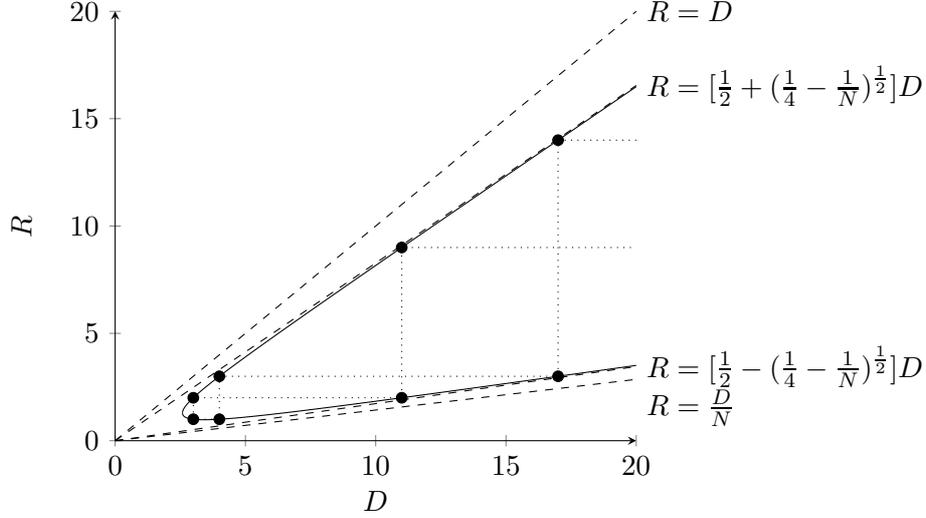

\begin{theorem}
\label{thm.f>0}
If $f(D,N,R):=DNR-D^2-NR^2>0$ for some $(D,N,R)\in\bbZ^3$ with $D>0$, $N>1$ and $R>0$ then $N>4$ and $\Orb(D,N,R)$ is an infinite subset of $\set{(D',N,R')\in\bbZ^3: D'>0, R'>0}$ that contains a unique point $(D_0,N,R_0)$ that satisfies~\eqref{eq.minimal}.
Moreover, $D_0\leq D'$ and $R_0\leq R'$ for all $(D',N,R')\in\Orb(D,N,R)$.\smallskip

Here, if a $\TFF(D,N,R)$ exists then $\Orb(D,N,R)$ is the set of $(D',N,R')$ for which there exists a $\TFF(D',N,R')$ that can be obtained from it via iterated alternating Naimark and spatial complements.
Any such $\TFF(D',N,R')$ can only be equi-isoclinic if $R'=R_0$ and $D'\in\set{D_0,NR_0-D_0}$.
\end{theorem}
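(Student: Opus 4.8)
The plan is to follow the same template as the proof of Theorem~\ref{thm.f<0}, substituting for its sign dichotomy~\eqref{eq.no minimal} the following elementary trichotomy: for any $(D,N,R)\in\bbR^3$ with $N\geq 5$ and $D,R>0$, exactly one of the conditions ``$0<\tfrac{2D}N\leq R\leq\tfrac D2$'' (i.e.,~\eqref{eq.minimal}), ``$2D>NR$'' and ``$2R>D$'' holds; moreover, since $N\geq 5$, ``$2D>NR$'' forces $D>2R$ and ``$2R>D$'' forces $NR>2D$. Thus at a point of the second type the Naimark involution $\nu$ strictly decreases the first coordinate while fixing the third, and at a point of the third type the spatial involution $\sigma$ strictly decreases the third coordinate while fixing the first.

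First I would dispose of ``$N>4$'': the quadratic form~\eqref{eq.quadratic form} is negative definite for $N\leq 3$, and $f_4(D,R)=-(D-2R)^2\leq 0$, so $f(D,N,R)>0$ is impossible when $N\leq 4$. Next, writing $f_N(D,R)=D^2\bigl(-N(R/D)^2+N(R/D)-1\bigr)$ for $D\neq 0$, and noting that $-Nt^2+Nt-1>0$ only for $t$ strictly between the roots $\tfrac12[1\pm(\tfrac{N-4}N)^{\frac12}]$---which a short computation places strictly between $\tfrac1N$ and $1$---every point of the level set $\set{(D,R):f_N(D,R)=f(D,N,R)}$ with $D>0$ satisfies $0<R<D<NR$. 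Since $\nu$ and $\sigma$ preserve both this level set and $\bbZ^3$, and $(D,N,R)$ lies on the $D>0$ branch, it follows that $\Orb(D,N,R)\subseteq\set{(D',N,R')\in\bbZ^3:0<R'<D'<NR'}$; in particular every orbit point has positive entries and admits both a Naimark and a spatial complement.

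To locate the minimal point, I would pick $(D_0,N,R_0)\in\Orb(D,N,R)$ minimizing $D'+R'$ over the orbit (the minimum is attained, as the coordinate sums are positive integers). If $2D_0>NR_0$ then $\nu(D_0,N,R_0)$ would be an orbit point of strictly smaller coordinate sum, and if $2R_0>D_0$ then $\sigma(D_0,N,R_0)$ would; so by the trichotomy $(D_0,N,R_0)$ satisfies~\eqref{eq.minimal}. The crux is then to show that, placing $(D_0,N,R_0)$ at index $0$ of its Naimark--spatial sequence $\set{(D^{(K)},N,R^{(K)})}_{K=-\infty}^{\infty}$ with (say) $(D^{(1)},N,R^{(1)})=\nu(D_0,N,R_0)$, the sequences $\set{D^{(K)}}_K$ and $\set{R^{(K)}}_K$ are each unimodal: nonincreasing for $K\leq 0$ and nondecreasing for $K\geq 0$. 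I expect this induction to be the main obstacle. One carries the invariant that, for $K\geq 1$, the point $(D^{(K)},N,R^{(K)})$ satisfies $2D^{(K)}\geq NR^{(K)}$ and $2R^{(K)}\leq D^{(K)}$ when $K$ is odd, and $2D^{(K)}\leq NR^{(K)}$ and $2R^{(K)}\geq D^{(K)}$ when $K$ is even (with the mirror statement for $K\leq 0$), each inductive step being a short computation that repeatedly invokes $N\geq 5$. Granting unimodality, $D^{(K)}\geq D_0$ and $R^{(K)}\geq R_0$ for all $K$, which gives the asserted minimality; running the same argument starting from any second point satisfying~\eqref{eq.minimal} forces it to equal $(D_0,N,R_0)$, giving uniqueness. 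Finiteness of the orbit would then make the bi-infinite sequence periodic, hence by unimodality constantly equal to $(D_0,N,R_0)$, forcing both $\nu$ and $\sigma$ to fix $(D_0,N,R_0)$ and therefore $N=4$, a contradiction; so the orbit is infinite.

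For the final paragraph of the statement: if a $\TFF(D,N,R)$ exists, then since every orbit point lies in $\set{0<R'<D'<NR'}$ it has both complements, so iterated alternating Naimark and spatial complements starting from $\TFF(D,N,R)$ realize a $\TFF(D',N,R')$ for exactly the triples $(D',N,R')\in\Orb(D,N,R)$. Finally, suppose an $\EITFF(D',N,R')$ exists with $(D',N,R')\in\Orb(D,N,R)$; replacing it by its Naimark complement if necessary (still an EITFF, since $D'<NR'$), I may assume $2D'\leq NR'$. As $R'<D'$ on the hyperbola, the trichotomy now allows only $2R'\leq D'$ or $R'<D'<2R'$, and the latter is impossible because, as recalled in Section~2, no $\TFF(D',N,R')$ with $R'<D'<2R'$ can be equi-isoclinic. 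Hence $(D',N,R')$ satisfies~\eqref{eq.minimal} and so, by uniqueness, equals $(D_0,N,R_0)$; undoing the possible Naimark complement gives $R'=R_0$ and $D'\in\set{D_0,NR_0-D_0}$.
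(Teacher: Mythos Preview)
Your proof is correct and follows essentially the same route as the paper's: establish that the orbit lies in the positive cone $0<R'<D'<NR'$, locate a point satisfying~\eqref{eq.minimal}, prove monotonicity of the Naimark--spatial sequence away from it, and deduce uniqueness, infiniteness, and the EITFF constraint via the $2R'\leq D'$ obstruction. Your implementation differs only cosmetically---you minimize $D'+R'$ rather than $R'$ alone to find the minimal point, frame the case split as an explicit trichotomy, and package the monotonicity as a parity-indexed invariant rather than the paper's pair of ``right-then-up / up-then-right'' observations~\eqref{eq.Naimark then spatial}--\eqref{eq.spatial then Naimark}---but the underlying argument is identical.
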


\begin{proof}
Recall that the matrix that gives rise to the quadratic form $f_N$ of~\eqref{eq.quadratic form} is negative semidefinite when $N\in\set{2,3,4}$.
As such, if $f(D,N,R)>0$ for some $(D,N,R)\in\bbZ^3$ with $N>1$ then $N>4$.
Accordingly, for the remainder of the proof let $N>4$ be a fixed integer.
As noted in the proof of Theorem~\ref{thm.f=0},
for such $N$,
the zero set of $f_N$ is the union of the two (distinct) lines in the $(D,R)$-plane with slopes \smash{$\tfrac12[1\pm(\tfrac{N-4}{N})^{\frac12}]$}.
These slopes are bounded above by $1$ and below by $\tfrac 1N$:
since $N(N-4)<(N-2)^2$ we have
\smash{$(\tfrac{N-4}{N})^{\frac12}<\tfrac{N-2}{N}=1-\tfrac 2N$},
and so
\smash{$\tfrac1N<\tfrac12[1-(\tfrac{N-4}{N})^{\frac12}]$}.
Note $f_N$ is negative at nonzero points of these bounding lines:
\begin{align}
\nonumber
f_N(D,\tfrac DN)
&=DN\tfrac DN-D^2-N(\tfrac DN)^2
=-\tfrac{D^2}N
<0,\\
\label{eq.f is negative when D=R}
f_N(D,D)
&=DND-D^2-ND^2
=-D^2
<0,
\end{align}
for all $D\neq 0$.
Meanwhile it is positive at the nonzero points on the line of slope $\frac12$:
\begin{equation*}
f_N(D,\tfrac D2)
=DN\tfrac D2-D^2-N(\tfrac D2)^2
=(\tfrac N4-1)D^2
>0,
\end{equation*}
for all $D\neq 0$.
In particular, $\set{(D,R)\in\bbR^2: f_N(D,R)>0}$ is a subset of the union of the first and third quadrants, with its first-quadrant component lying within the cone
$0<\frac DN<R<D$, that is,
\begin{equation}
\label{eq.positive component}
\set{(D,R)\in\bbR^2: D>0,\, R>0,\, f_N(D,R)>0}\\
\subseteq\set{(D,R)\in\bbR^2: 0<R<D<NR}.
\end{equation}

At this stage, fix positive integers $D$ and $R$ such that $f(D,N,R)>0$,
let $C=f(D,N,R)$,
and define $\Orb(D,N,R)$ as in Definition~\ref{def.orbit}.
Note that $(D,N,R)$ is a point in the set
\begin{equation}
\label{eq.orbit superset}
\set{(D',N,R')\in\bbZ^3: D'>0,\, R'>0,\, f(D',N,R')=C}.
\end{equation}
To show that $\Orb(D,N,R)$ is contained in
$\set{(D',N,R')\in\bbZ^3: D>0,\, R>0}$ as claimed it thus suffices to show that~\eqref{eq.orbit superset} is invariant under both $\nu$ and $\sigma$.
To see this, note that if $(D',N,R')\in\bbZ^3$ then
$\nu(D',N,R')=(NR'-D',N,R')$ and $\sigma(D',N,R')=(D',N,D'-R')$ lie in $\bbZ^3$
and satisfy $f(\nu(D',N,R'))=f(\sigma(D',N,R'))=f(D',N,R')=C$.
Moreover, by~\eqref{eq.positive component}, $0<R'<D'<NR'$ and so all entries of both $\nu(D',N,R')$ and $\sigma(D',N,R')$ are positive.

We next show that $\Orb(D,N,R)$ contains a point $(D_0,N,R_0)$ that is minimal in the sense of~\eqref{eq.minimal}.
Let $R_0:=\min\set{R': (D',N,R')\in\Orb(D,N,R)}$.
(This is well-defined since
$\Orb(D,N,R)$ is contained in $\set{(D',N,R')\in\bbZ^3: D>0,\, R>0}$.)
Take $D_0$ such that $(D_0,N,R_0)\in\Orb(D,N,R)$.
By applying $\nu$ if necessary,
we may assume without loss of generality that $0<D_0\leq NR_0-D_0$,
namely that $(D_0,N,R_0)$ satisfies the first condition of~\eqref{eq.minimal}.
At the same time, note that since $(D_0,N,D_0-R_0)=\sigma(D_0,N,R_0)\in\Orb(D,N,R)$,
the definition of $R_0$ gives $0<R_0\leq D_0-R_0$,
namely that $(D_0,N,R_0)$ also satisfies the second condition of~\eqref{eq.minimal}.

As we now explain, $(D_0,N,R_0)$ is also a minimal point of $\Orb(D,N,R)$ in the traditional sense, that is, satisfies $D_0\leq D'$ and $R_0\leq R'$ for all $(D',N,R')\in\Orb(D,N,R)$.
To see this, let $\set{(D^{(K)},N,R^{(K)})}_{K=-\infty}^{\infty}$ be the Naimark spatial sequence of $(D_0,N,R_0)$ as defined by~\eqref{eq.Naimark spatial sequence} when $(D^{(0)},N,R^{(0)}):=(D_0,N,R_0)$.
(We caution that since $(D_0,N,R_0)$ is not necessarily equal to $(D,N,R)$,
their respective Naimark-spatial sequences might not be equal.
That said, since $(D_0,N,R_0)\in\Orb(D,N,R)$,
each of these two sequences can be obtained by shifting and/or reversing the other,
and $\Orb(D_0,N,R_0)=\Orb(D,N,R)$ regardless.)

To proceed, we formally prove something evidenced in graphs of such orbits,
such as Figure~\ref{fig.(7,3,1) orbit}: when $N>4$,
any rightwards move in $\Orb(D,N,R)$ is followed by one upwards,
while any upwards move is followed by one rightwards.
To be precise, if $(D',N,R')\in\Orb(D,N,R)$ and $D'\leq NR'-D'$
(that is, if the first entry of $(D',N,R')$ is no more than that of $\nu(D',N,R')$),
then since $N>4$ (and so $\tfrac{N}{2}<N-2$),
\begin{equation*}
D'
\leq\tfrac{N}{2}R'
<(N-2)R'
=(N-1)R'-R',
\quad
\text{i.e.,}
\quad
R'<(N-1)R'-D',
\end{equation*}
namely that the third entry of $\nu(D',N,R')$ is less than that of
\begin{equation}
\label{eq.Naimark then spatial}
\sigma(\nu(D',N,R'))
=\sigma(NR'-D',N,R')
=(NR'-D',(N-1)R'-D').
\end{equation}
Similarly, if $(D',N,R')\in\Orb(D,N,R)$ and $R'\leq D'-R'$ (that is, if the third entry of $(D',N,R')$ is no more than that of $\sigma(D',N,R')$) then
\begin{equation*}
NR'
\leq\tfrac{N}{2}D'
<(N-2)D'
=(N-1)D'-D',
\quad
\text{i.e.,}
\quad
D'<(N-1)D'-NR',
\end{equation*}
namely that the first entry of $\sigma(D',N,R')$ is less than that of
\begin{equation}
\label{eq.spatial then Naimark}
\nu(\sigma(D',N,R'))
=\nu(D',N,D'-R')
=((N-1)D'-NR',D'-R').
\end{equation}
In particular,
since $D_0\leq NR_0-D_0$, iteratively applying these facts to the positively-indexed portion of the Naimark-spatial sequence~\eqref{eq.Naimark spatial sequence} of $(D_0,N,R_0)$ gives
\begin{align}
\label{eq.right then up}
\begin{split}
D_0 & = D^{(0)} \leq D^{(1)} = D^{(2)} < D^{(3)} = D^{(4)} < D^{(5)} =\dotsb,\\
R_0 & = R^{(0)}    = R^{(1)} < R^{(2)} = R^{(3)} < R^{(4)} = R^{(5)} <\dotsb,
\end{split}
\end{align}
where $D^{(2J-1)}=D^{(2J)}<D^{(2J+1)}$ and $R^{(2J-1)}<D^{(2J)}=D^{(2J+1)}$ for all $J\geq 1$.
Since $R_0\leq D-R_0$, iteratively applying these same facts to the negatively-indexed portion of this sequence also gives
\begin{align}
\label{eq.up then right}
\begin{split}
D_0 & = D^{(0)}    = D^{(-1)} < D^{(-2)} = D^{(-3)} < D^{(-4)} =D^{(-5)} <\dotsb,\\
R_0 & = R^{(0)} \leq R^{(-1)} = R^{(-2)} < R^{(-3)} = R^{(-4)} <R^{(-5)} =\dotsb,
\end{split}
\end{align}
where $D^{(-2J+1)}<D^{(-2J)}=D^{(-2J-1)}$ and $R^{(-2J+1)}=D^{(-2J)}<D^{(-2J-1)}$ for all $J\geq 1$.
(We leave formal inductive proofs of these facts to the interested reader.)
In particular, $\Orb(D,N,R)$ has infinite cardinality,
and both $D_0\leq D'$ and $R_0\leq R'$ for all $(D',N,R')\in\Orb(D,N,R)$.
In fact, our argument implies that any $(D_0,N,R_0)\in\Orb(D,N,R)$ that satisfies~\eqref{eq.minimal} necessarily has $D_0=\min\set{D': (D',N,R')\in\Orb(D,N,R)}$ and $R_0=\min\set{R': (D',N,R')\in\Orb(D,N,R)}$,
and so such a point $(D_0,N,R_0)$ is necessarily unique.

To prove the final parts of this result,
recall that since $f(D,N,R)>0$, $\Orb(D,N,R)$ is contained in $\set{(D',N,R')\in\bbZ^3: D'>0, R'>0}$, and so any $\TFF(D',N,R')$ with $(D',N,R')\in\Orb(D,N,R)$ has both a Naimark and spatial complement.
Moreover, since any member of $\Orb(D,N,R)$ can be obtained from any other via iterated alternating Naimark and spatial involutions,
if a (real and/or equichordal) $\TFF(D',N,R')$ exists for any $(D',N,R')\in\Orb(D,N,R)$ then one exists for all $(D',N,R')\in\Orb(D,N,R)$.
In this case, $\Orb(D,N,R)$ is the set of $(D',N,R')$ for which there exists a $\TFF(D',N,R')$ that can be obtained from a $\TFF(D,N,R)$ in this manner.
Only a scant few of these might be equi-isoclinic.
To see this, assume an $\EITFF(D',N,R')$ exists for some $(D',N,R')\in\Orb(D,N,R)$.
By taking its Naimark complement if necessary,
we then have that an $\EITFF(D'',N,R')$ exists for some $(D'',N,R')\in\Orb(D,N,R)$ that satisfies $D''\leq NR'-D''$ where $D''\in\set{D',NR'-D'}$.
From Section~2, recall that an $\EITFF(D'',N,R')$ can only exist if either $2R'\leq D''$ or $D''=R'$.
The latter case cannot happen here since~\eqref{eq.f is negative when D=R} gives $f(D'',N,D'')<0$ whereas the invariance of $f$ over $\Orb(D,N,R)$ guarantees $f(D'',N,R')=f(D,N,R)>0$.
Thus, $2R'\leq D''$.
Since $(D'',N,R')\in\Orb(D,N,R)$ satisfies both $D''\leq NR'-D''$ and $2R'\leq D''$,
it is minimal~\eqref{eq.minimal},
implying $R'=R_0$ and $D''=D_0$ (and so $D'\in\set{D_0,NR_0-D_0}$), as claimed.
\end{proof}

As we have just seen, when $f(D,N,R)>0$, there are only ever at most two choices of triples in the infinite orbit of $(D,N,R)$ for which a corresponding EITFF might exist (and at most one such choice when $D_0=NR_0-D_0$),
namely those corresponding to the lowest point(s) $\set{(D_0,R_0),(NR_0-D_0,R_0)}$ on the ``Naimark-spatial path" on its associated hyperbola.
We remark that more generally, by combining~\eqref{eq.right then up} and~\eqref{eq.up then right} with the facts from Section~2 about how the singular values of cross-Gram matrices evolve with respect to Naimark and spatial complements,
one finds that for any $J>1$, there are at least $J$ distinct principal angles between any two subspaces of any $\TFF(D^{(K)},N,R^{(K)})$ when either $K\geq 2J$ or $K\leq -2J-1$ (under the assumptions and notation of Theorem~\ref{thm.f>0} and its above proof).

We also remark that when $f(D,N,R)>0$, \eqref{eq.right then up} and~\eqref{eq.up then right} imply that $(D_0,N,R_0)$ is the only point $(D',N,R')$ of $\Orb(D,N,R)$ that might be a fixed point of either $\nu$ or $\sigma$,
that is, have either $D'=NR'-D'$ or $R'=D-R'$.
Here, unlike in Theorem~\ref{thm.f=0}, it cannot be a fixed point of both:
if $D_0=NR_0-D_0$ and $R_0=D_0-R_0$ then $4R_0=2D_0=NR_0$ implying $N_0=4$ and $D_0=2R_0$,
and so $f(D,N,R)=f(D_0,N,R_0)=f(2R_0,4,R_0)=0$, a contradiction.
In such cases, the graph of the orbit is not ``braided" like that of Figure~\ref{fig.(7,3,1) orbit}, but rather appears as two copies of the same path emanating from $(D_0,R_0)$.
For context, note that in the proof of Theorem~\ref{thm.f<0} we showed that no point $(D,N,R)$ with $N\geq4$ and $f(D,N,R)<0$ can be a fixed point of either $\nu$ or $\sigma$.

We further note that the mapping $(D,R)\mapsto(NR-D,(N-1)R-D)$ essentially seen in~\eqref{eq.Naimark then spatial} is a linear transformation whose inverse is the mapping $(D,R)\mapsto((N-1)D-NR,D-R)$, cf.~\eqref{eq.spatial then Naimark}.
It turns out that its $2\times 2$ matrix representation is diagonalizable provided $N>4$.
This permits one to derive explicit closed-form expressions for all members of a Naimark-spatial sequence in terms of $(D,N,R)$ and $K$.
We do not do so here, since in our opinion, their technical nature only serves to obscure the delicate yet elementary arguments we have used up to this point.

\subsection{A Naimark-spatial necessary condition on the existence of tight fusion frames}

We conclude this section with a result that summarizes the ramifications of the previous ones in regards to the existence of (real and/or equichordal and/or equi-isoclinic) TFFs:

\begin{theorem}
\label{thm.summary}
If a $\TFF(D,N,R)$ exists then it can be obtained via iterated alternating Naimark and spatial complements from either:
\begin{enumerate}
\renewcommand{\labelenumi}{(\roman{enumi})}
\item
a $\TFF(D_0,N,R_0)$ with $R_0\in\set{\frac{D_0}N,D_0}$,
which is necessarily equi-isoclinic, or
\item
a $\TFF(D_0,N,R_0)$ where $(D_0,N,R_0)$ satisfies~\eqref{eq.minimal},
\end{enumerate}
depending on whether $f(D,N,R):=DNR-D^2-NR^2$ is negative or nonnegative, respectively.
In either case, $(D_0,N,R_0)$ is unique,
and this $\TFF(D,N,R)$ can only be equi-isoclinic if $R=R_0$ and $D\in\set{D_0,NR_0-D_0}$.
Any such $\TFF(D_0,N,R_0)$ is real and/or equichordal if and only if this $\TFF(D,N,R)$ is as well.
\end{theorem}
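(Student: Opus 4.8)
The plan is to reduce the statement to a case analysis on the sign of the invariant $f(D,N,R)$, invoking in each case the corresponding result already proved in this section. First I would record two preliminary observations. Since a $\TFF(D,N,R)$ exists we have $(D,N,R)\in\bbZ^3$ with $0<R\leq D\leq NR$, so in particular $D>0$, $N>1$, $R>0$. Moreover, the $2\times 2$ matrix defining the quadratic form $f_N$ of~\eqref{eq.quadratic form} is negative definite whenever $N\in\set{2,3}$---its determinant $\tfrac14 N(4-N)$ and trace $-(N+1)$ are then positive and negative, respectively---so $f(D,N,R)=f_N(D,R)<0$ whenever $N\in\set{2,3}$. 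Thus the dichotomy ``$f(D,N,R)<0$ versus $f(D,N,R)\geq 0$'' splits into exactly four sub-cases to treat: $N\in\set{2,3}$; $N\geq 4$ with $f<0$; $N=4$ with $f=0$; and $N>4$ with $f>0$.

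Next I would run through these sub-cases. When $N\in\set{2,3}$, Theorem~\ref{thm.N=2,3} lists the admissible parameters and realizes each $\TFF(D,N,R)$ by iterated alternating Naimark and spatial complements of a trivial---and hence equi-isoclinic, equichordal, and real---$\TFF(D_0,N,R_0)$ with $R_0\in\set{\tfrac{D_0}N,D_0}$; inspection of the short orbits~\eqref{eq.orbit when N=2} and~\eqref{eq.orbit when N=3} (equivalently of the paths~\eqref{eq.NS path N=2}--\eqref{eq.NS path N=3 case 2}) shows that this $(D_0,N,R_0)$ is the unique member of $\Orb(D,N,R)$ with positive entries and third coordinate in $\set{\tfrac{D_0}N,D_0}$, and that the equality $R=R_0$ fails exactly in the equichordal-but-not-equi-isoclinic case $N=3$, $R=\tfrac{2D}3$, which matches the claimed equi-isoclinic restriction. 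When $N\geq 4$ and $f<0$, Theorem~\ref{thm.f<0} applies: since a $\TFF(D,N,R)$ exists, the ``$R_0\notin\set{\tfrac{D_0}N,D_0}$'' alternative of that theorem is impossible, so $R_0\in\set{\tfrac{D_0}N,D_0}$, and the theorem supplies the unique point $(D_0,N,R_0)$, the identification of $\Orb^{+}(D,N,R)$ with the parameters of TFFs reachable from a $\TFF(D_0,N,R_0)$ by iterated alternating complements, the fact that every such TFF is equichordal, and the stated equi-isoclinic restriction. When $N=4$ and $f=0$, Theorem~\ref{thm.f=0} gives $D=2R$ and $\Orb(D,N,R)=\set{(2R,4,R)}$; here $(D_0,N,R_0):=(2R,4,R)=(D,N,R)$ works (obtained by zero complements), $\tfrac{2D_0}{N}=R_0=\tfrac{D_0}{2}$ so it satisfies~\eqref{eq.minimal}, it is trivially unique, and since $NR_0-D_0=D_0$ the restriction ``$R=R_0$ and $D\in\set{D_0,NR_0-D_0}$'' holds automatically and so imposes nothing. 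Finally, when $N>4$ and $f>0$, Theorem~\ref{thm.f>0} provides the unique point $(D_0,N,R_0)$ satisfying~\eqref{eq.minimal}, the fact that---because a $\TFF(D,N,R)$, and hence also a $\TFF(D_0,N,R_0)$, exists---$\Orb(D,N,R)$ is exactly the set of parameters reachable from $(D_0,N,R_0)$, and the stated equi-isoclinic restriction. In every sub-case the conclusion falls into alternative~(i) when $f<0$ and into alternative~(ii) when $f\geq 0$, as asserted.

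It remains to handle the last sentence, that $\TFF(D_0,N,R_0)$ is real (resp.\ equichordal) if and only if $\TFF(D,N,R)$ is. This follows from the orbit structure just established together with the facts recalled in Section~2: since $\nu$ and $\sigma$ are involutions, a TFF is real precisely when its Naimark complement is and precisely when its spatial complement is, and the same holds with ``real'' replaced throughout by ``equichordal''; since $(D,N,R)$ and $(D_0,N,R_0)$ are joined by a finite alternating chain of Naimark and spatial complements, each of these two properties transfers along the chain in both directions, giving the asserted equivalences. (When $f<0$ both sides of both equivalences are in fact always true: such TFFs are necessarily equichordal by Theorems~\ref{thm.N=2,3} and~\ref{thm.f<0}, and a trivial real $\TFF(D_0,N,R_0)$ always exists and propagates realness along the chain.) The only genuine work here is the bookkeeping for the boundary sub-cases---verifying that $N\in\set{2,3}$ and $N=4$, $f=0$ slot cleanly into the (i)/(ii) dichotomy and that $(D_0,N,R_0)$ is truly unique there; for $N\geq 4$ with $f\neq 0$ this uniqueness is already part of Theorems~\ref{thm.f<0} and~\ref{thm.f>0}, while for $N\in\set{2,3}$ it can be read off directly from~\eqref{eq.orbit when N=2} and~\eqref{eq.orbit when N=3}.
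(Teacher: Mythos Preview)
Your proposal is correct and takes essentially the same approach as the paper: a case analysis on the sign of $f(D,N,R)$ (together with the value of $N$), invoking Theorems~\ref{thm.N=2,3}, \ref{thm.f=0}, \ref{thm.f<0}, and~\ref{thm.f>0} in the appropriate sub-cases, with the real/equichordal equivalence handled via the fact that these properties propagate in both directions along Naimark and spatial complements. One minor slip: in your final parenthetical you assert that when $f<0$ ``both sides of both equivalences are in fact always true,'' citing that a trivial real $\TFF(D_0,N,R_0)$ exists; but the theorem concerns a \emph{given} $\TFF(D,N,R)$ and the specific $\TFF(D_0,N,R_0)$ from which it arises, and neither of these need be real even when $f<0$---your main argument before the parenthetical already suffices, so this does not affect validity.
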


\begin{proof}
Fix any $\TFF(D,N,R)$ where $(D,N,R)\in\bbZ^3$ satisfies $D>0$, $N>1$ and $R>0$.
We usually assume $N>1$ by convention to avoid dividing by $0$ in~\eqref{eq.chordal and simplex packing bounds}.
That said, if one wishes to more generally consider TFFs for $\calH$ that consist of a single subspace $\calU$,
note that necessarily $\calU=\calH$.
This $\TFF(D,1,D)$ has neither a Naimark or spatial complement,
but is vacuously equi-isoclinic,
and $(D_0,N,R_0)=(D,1,D)$ satisfies $R_0=D\in\set{D}=\set{\frac{D_0}N,D_0}$.
That is, (i) applies even in this degenerate case.

Now for the moment assume that this $\TFF(D,N,R)$ arises from a $\TFF(D_0,N,R_0)$ via iterated alternating Naimark and spatial complements.
Clearly, if one of these two TFFs is real and/or equichordal then the other is as well.
Further recall that since $f$ is invariant with respect to $\nu$ and $\sigma$, $f(D,N,R)=f(D_0,N,R_0)$.
In this case, if $R_0\in\set{\frac{D_0}N,D_0}$ then $f(D,N,R)<0$ since either $f(D,N,R)=f(NR_0,N,R_0)=-NR_0^2$ or $f(D,N,R)=f(R_0,N,R_0)=-R_0^2$.
If instead $(D_0,N,R_0)$ satisfies~\eqref{eq.minimal} then $0<4R_0\leq 2D_0\leq NR_0$ and so $N\geq 4$,
at which point Theorem~\ref{thm.f<0} implies that $f(D_0,N,R_0)\geq 0$.
We now use Theorems~\ref{thm.N=2,3}--\ref{thm.f>0} to show that this $\TFF(D,N,R)$ indeed arises in exactly one of these two ways.

If $N=2$ then $f(D,N,R)<0$ and recall that Theorem~\ref{thm.N=2,3} gives that $R\in\set{\frac D2,D}$,
meaning that we can choose our desired $\TFF(D_0,N,R_0)$ be this $\TFF(D,N,R)$.
To be clear, in light of~\eqref{eq.NS path N=2}, we are also free to choose our $\TFF(D_0,N,R_0)$ to be the spatial or Naimark complement of this $\TFF(D,N,R)$ when $R=\frac D2$ and $R=D$, respectively.
Regardless, $(D_0,N,R_0)=(D,N,R)$.)
Here, the condition $R=R_0$, $D\in\set{D_0,NR_0-D_0}$ is automatically satisfied.

If $N=3$ then $f(D,N,R)<0$ and Theorem~\ref{thm.N=2,3} gives that
$R$ is $\frac D3$, $\frac D2$, $\frac{2D}3$, or $D$.
When either $R=\frac D3$ or $R=D$ we can just let our desired $\TFF(D_0,N,R_0)$ be this $\TFF(D,N,R)$.
If instead $R=\frac D2$ then~\eqref{eq.NS path N=3 case 2} suggests we let our $\TFF(D_0,N,R_0)$ be the Naimark complement of this $\TFF(D,N,R)$,
implying $(D_0,N,R_0)=\nu(D,3,\tfrac D2)=(\tfrac D2,3,\tfrac D2)$ and so $R_0=D_0$.
Similarly, if $R=\frac{2D}3$ then~\eqref{eq.NS path N=3 case 1} suggests we let our $\TFF(D_0,N,R_0)$ be the spatial complement of this $\TFF(D,N,R)$,
implying $(D_0,N,R_0)=\sigma(D,3,\tfrac{2D}3)=(D,3,\tfrac D3)$ and so $R_0=\frac{D_0}N$.
In fact, by~\eqref{eq.NS path N=3 case 1} and~\eqref{eq.NS path N=3 case 2},
when $R$ is $\frac D3$, $\frac D2$, $\frac{2D}3$ or $D$ we must choose
$(D_0,N,R_0)$ to be
\begin{equation*}
(D,3,\tfrac D3),
\qquad
(\tfrac D2,3,\tfrac D2),
\qquad
(D,3,\tfrac D3),
\qquad
(D,3,D),
\end{equation*}
respectively.
To be clear, though there is a unique choice of $(D_0,N,R_0)$ in each case,
the corresponding $\TFF(D_0,N,R_0)$ is not necessarily unique.
When $R=\frac D2$ for example,
both the Naimark complement of our given $\TFF(D,N,R)$ and the Naimark complement of its spatial complement are suitable $\TFF(D_0,N,R_0)$.
In all but the third of these four cases,
$(D,N,R)$ satisfies $R=R_0$ and $D\in\set{D_0,NR_0-D_0}$.
Meanwhile, in the third case, $R=\frac{2D}3$, implying $R<D<2R$ and so no $\EITFF(D,N,R)$ exists.
Thus, when $N=3$, if an $\EITFF(D,N,R)$ exists then indeed $R=R_0$ and $D\in\set{D_0,NR_0-D_0}$, as needed.

Next consider the case where $N\geq 4$ and $f(D,N,R)<0$.
Here, Theorem~\ref{thm.f<0} gives that this $\TFF(D,N,R)$ arises via iterated alternating Naimark complements from a $\TFF(D_0,N,R_0)$ with $R_0\in\set{\frac{D_0}N,D_0}$.
Such a $\TFF(D_0,N,R_0)$ is necessarily equi-isoclinic.
Theorem~\ref{thm.f<0} moreover gives that this $(D_0,N,R_0)$ is unique:
if $(D',N,R')\in\Orb^+(D,N,R)$ satisfies $R'\in\set{\frac{D'}N,D'}$ then $(D',N,R')=(D_0,N,R_0)$ where
\begin{equation*}
D_0=\min\set{D': (D',N,R')\in\Orb^+(D,N,R)},
\quad
R_0=\min\set{R': (D',N,R')\in\Orb^+(D,N,R)}.
\end{equation*}
In fact, recalling from the proof of Theorem~\ref{thm.f<0} that no member of the Naimark-spatial sequence of $(D,N,R)$ is a fixed point of either $\nu$ or $\sigma$,
our $\TFF(D_0,N,R_0)$ is itself unique.
Theorem~\ref{thm.f<0} further gives that this $\TFF(D,N,R)$ is equi-isoclinic only if $R=R_0$ and $D\in\set{D_0,NR_0-D_0}$.

Next, in the case where $f(D,N,R)=0$, Theorem~\ref{thm.f=0} gives that $N=4$ and $D=2R$.
In this case, letting $\TFF(D_0,N,R_0)$ be this $\TFF(D,N,R)$ we have $(D_0,N,R_0)=(D,N,R)=(2R,4,R)$ which satisfies~\eqref{eq.minimal}.
More generally, we could take $\TFF(D_0,N,R_0)$ to be any TFF obtained from this $\TFF(D,N,R)$ via iterated alternating Naimark and spatial complements.
Regardless, $(D_0,N,R_0)=(2R,4,R)$.
Here, $R=R_0$ and the condition $D\in\set{D_0,NR_0-D_0}$ is automatically satisfied since $D=2R$ and $\set{D_0,NR_0-D_0}=\set{2R}$.

Finally, in the case where $f(D,N,R)>0$,
Theorem~\ref{thm.f>0} gives that this $\TFF(D,N,R)$ arises via iterated alternating Naimark and spatial complements from a $\TFF(D_0,N,R_0)$
where $(D_0,N,R_0)$ is the unique member of $\Orb(D,N,R)$ that satisfies~\eqref{eq.minimal}.
(In fact, a careful analysis of the proof of Theorem~\ref{thm.f>0} reveals that this $\TFF(D_0,N,R_0)$ is itself unique when $4R_0<2D_0<NR_0$,
but in only unique up to spatial or Naimark complements when either $2R_0=D_0$ or $2D_0=NR_0$, respectively.)
Theorem~\ref{thm.f>0} moreover gives that this $\TFF(D,N,R)$ can only be equi-isoclinic if $R=R_0$ and $D\in\set{D_0,NR_0-D_0}$.
\end{proof}

\section{Equichordal tight fusion frames from difference families}

In this section we introduce a method for constructing an ECTFF from a difference family for a finite abelian group.
This method properly generalizes King's construction of an ECTFF from a semiregular divisible difference set~\cite{King16}.
Except in the most trivial cases, each $\ECTFF(D,N,R)$ that results from this process has $f(D,N,R)\geq 0$.
In the next section, we compare the minimal points of the orbits of these ECTFFs against those of previously known constructions, and certify that some of them are truly novel.

Let $\calG$ be a finite abelian group whose operation we denoted as addition.
A \textit{character} of $\calG$ is a homomorphism $\gamma:\calG\rightarrow\bbT:=\set{z\in\bbC: \abs{z}=1}$.
The \textit{(Pontryagin) dual} $\hat{\calG}$ of $\calG$ is the set of all characters of $\calG$,
which is itself an abelian group under pointwise multiplication.
In fact it is well known that $\hat{\calG}$ is isomorphic to $\calG$ in this setting.
The \textit{character table} $\bfGamma$ of $\calG$ is the synthesis operator of the sequence \smash{$\set{\gamma}_{\gamma\in\hat{\calG}}$} of all characters of $\calG$
(with each $\gamma$ serving as its own index),
namely that $\calG\times\hat{\calG}$ matrix whose $(g,\gamma)$th entry is $\bfGamma(g,\gamma)=\gamma(g)$.
It is well known that the characters of $\calG$ form an equal-norm orthogonal basis for $\bbC^\calG$, and so $\bfGamma$ is a complex Hadamard matrix, having $\bfGamma^{-1}=\frac1G\bfGamma^*$ where $G$ is the order of $\calG$.
Here, $\bfGamma^*$ is the \textit{discrete Fourier transform} (DFT) on $\calG$.
Since $\bfGamma^*$ is the analysis operator of the characters,
it satisfies $(\bfGamma^*\bfy)(\gamma)=\ip{\gamma}{\bfy}$ for all $\bfy\in\bbC^\calG$.

For any nonempty subset $\calD$ of $\calG$,
the corresponding \textit{harmonic frame} consists of the normalized restrictions of the characters of $\calG$ to it, namely $\set{\bfphi_\gamma}_{\gamma\in\hat{\calG}}$,
$\bfphi_\gamma(d):=D^{-\frac12}\gamma(d)$ where $D:=\#(\calD)$.
Any such frame is unit norm and moreover tight:
for any $d_1,d_2\in\calD$,
\begin{equation*}
(\bfPhi\bfPhi^*)(d_1,d_2)
=\sum_{\gamma\in\hat{\calG}}\bfphi_\gamma(d_1)\overline{\bfphi_\gamma(d_2)}
=\tfrac1D\sum_{\gamma\in\hat{\calG}}\gamma(d_1)\overline{\gamma(d_2)}
=\tfrac1D(\bfGamma\bfGamma^*)(d_1,d_2)
=\tfrac GD\bfI(d_1,d_2).
\end{equation*}
The entries of the Gram matrix of a harmonic frame are expressible in terms of the DFT of the characteristic function $\bfchi_\calD$ of $\calD$:
\begin{equation}
\label{eq.harmonic Gram}
\ip{\bfphi_{\gamma_1}}{\bfphi_{\gamma_2}}
=\sum_{d\in\calD}\overline{\bfphi_{\gamma_1}(d)}\bfphi_{\gamma_2}(d)
=\tfrac1D\sum_{g\in\calG}\overline{(\gamma_1^{}\gamma_2^{-1})(g)}\bfchi_\calD(g)
=\tfrac1D(\bfGamma^*\bfchi_\calD)(\gamma_1^{}\gamma_2^{-1}).
\end{equation}
To compute the modulus of such an entry,
we exploit the well-known ways in which the DFT distributes over the \textit{convolution} $\bfy_1*\bfy_2\in\bbC^\calG$ and \textit{involution} $\tilde{\bfy}\in\bbC^\calG$ of any $\bfy_1,\bfy_2,\bfy\in\bbC^\calG$, defined by $(\bfy_1*\bfy_2)(g):=\sum_{g'\in\calG}\bfy_1(g-g')\bfy_2(g')$ and $\tilde{\bfy}(g):=\overline{\bfy(-g)}$, respectively.
Specifically,
$[\bfGamma^*(\bfy_1*\bfy_2)](\gamma)
=(\bfGamma^*\bfy_1)(\gamma)(\bfGamma^*\bfy_2)(\gamma)$
and
$(\bfGamma^*\tilde{\bfy})(\gamma)=\overline{(\bfGamma^*\bfy)(\gamma)}$ for all $\gamma\in\hat{\calG}$.
In particular,
the DFT of the \textit{autocorrelation} $\tilde{\bfy}*\bfy$ of $\bfy\in\bbC^\calG$ satisfies
$[\bfGamma^*(\tilde{\bfy}*\bfy)](\gamma)=\abs{(\bfGamma^*\bfy)(\gamma)}^2$
for all $\gamma\in\hat{\calG}$,
and so is the pointwise modulus-squared of the DFT of $\bfy$.
Thus, for a harmonic frame $\set{\bfphi_\gamma}_{\gamma\in\hat{G}}$,
\begin{equation}
\label{eq.harmonic Gram modulus}
\abs{\ip{\bfphi_{\gamma_1}}{\bfphi_{\gamma_2}}}^2
=\tfrac1{D^2}\abs{(\bfGamma^*\bfchi_\calD)(\gamma_1^{}\gamma_2^{-1})}^2
=\tfrac1{D^2}\bfGamma^*(\tilde{\bfchi}_\calD*\bfchi_\calD)(\gamma_1^{}\gamma_2^{-1}).
\end{equation}
Here, for any $g\in\calG$, $(\tilde{\bfchi}_\calD*\bfchi_{\calD})(g)$ is both the cardinality of the intersection of $\calD$ with its $g$th shift,
as well as the number of times $g$ can be written as a difference of members of $\calD$:
since the mapping $d\mapsto(d,d-g)$ is a bijection from $\calD\cap(g+\calD)$ onto
$\set{(d_1,d_2)\in\calD\times\calD: g=d_1-d_2}$,
\begin{equation}
\label{eq.autocorrelation of characteristic}
(\tilde{\bfchi}_\calD*\bfchi_{\calD})(g)
=\sum_{g'\in\calG}\bfchi_{g+\calD}(g')\bfchi_\calD(g')
=\#[\calD\cap(g+\calD)]
=\#\set{(d_1,d_2)\in\calD\times\calD: g=d_1-d_2}.
\end{equation}
From this, we can begin to see a connection between harmonic ETFs, ECTFFs, etc., and subsets $\calD$ of finite abelian groups $\calG$ whose differences have various types of nice combinatorial properties.

For example, $\calD$ is a \textit{divisible difference set} (DDS) for $\calG$ with respect to some subgroup $\calH$ of $\calG$ of order $H$ if there exists constants $\Lambda_1$, $\Lambda_2$ such that
\begin{equation*}
\#\set{(d_1,d_2)\in\calD\times\calD: g=d_1-d_2}
=\left\{\begin{array}{cl}
\Lambda_1,&\ g\in\calH, g\neq0,\\
\Lambda_2,&\ g\notin\calH.
\end{array}\right.
\end{equation*}
Throughout, it will be clear from context whether a particular set $\calH$ is a Hilbert space or alternatively a subgroup of a finite abelian group.
We follow~\cite{Pott95} and denote such a set $\calD$ as a $\DDS(\frac GH,H,D,\Lambda_1,\Lambda_2)$.
By~\eqref{eq.autocorrelation of characteristic},
this equates to having
\begin{equation}
\label{eq.DDS autocorrelation}
\tilde{\bfchi}_\calD*\bfchi_{\calD}
=(D-\Lambda_1)\bfdelta_0+(\Lambda_1-\Lambda_2)\bfchi_\calH+\Lambda_2\bfchi_\calG.
\end{equation}
To proceed, we use the \textit{Poisson summation formula} (PSF),
which is a type of \textit{projection slice theorem},
and colloquially known in the signal processing community as the fact that ``the DFT of a comb is a comb."
Specifically,
letting
$\calH^\perp:=\set{\gamma\in\hat{G}:\ \gamma(h)=1,\,\forall\, h\in\calH}$
be the \textit{annihilator} of $\calH$,
the PSF states that $\bfGamma^*\bfchi_\calH=H\bfchi_{\calH^\perp}$,
or equivalently, that $\bfGamma\bfchi_{\calH^\perp}=\frac GH\bfchi_\calH$.
Here, $\calH^\perp$ is a subgroup of $\hat{G}$ of order $\frac GH$.
In fact, $\gamma\mapsto((g+\calH)\mapsto\gamma(g))$ is a well-defined isomorphism from $\calH^\perp$ onto the dual of $\calG/\calH$.
Applying the PSF to~\eqref{eq.DDS autocorrelation} gives that it is equivalent to having
\begin{equation}
\label{eq.DFT of DDS autocorrelation}
\abs{(\bfGamma^*\bfchi_{\calD})(\gamma)}^2
=(D-\Lambda_1)+H(\Lambda_1-\Lambda_2)\bfchi_{\calH^\perp}(\gamma)+G\Lambda_2\bfdelta_1(\gamma),
\quad
\forall\ \gamma\in\hat{\calG}.
\end{equation}
Here, evaluating the above equation at the identity character $1$ gives a necessary relationship between the five parameters of a DDS:
\begin{equation}
\label{eq.DDS parameter relation}
D^2
=(D-\Lambda_1)+H(\Lambda_1-\Lambda_2)+G\Lambda_2
=D+(H-1)\Lambda_1+(G-H)\Lambda_2.
\end{equation}
Combining it with~\eqref{eq.harmonic Gram modulus} and~\eqref{eq.DFT of DDS autocorrelation}, we see that $\calD$ is a DDS for $\calG$ with respect to $\calH$ if and only if
\begin{equation}
\label{eq.harmonic from DDS}
\abs{\ip{\bfphi_{\gamma_1}}{\bfphi_{\gamma_2}}}^2
=\tfrac1{D^2}\abs{(\bfGamma^*\bfchi_{\calD})(\gamma_1^{}\gamma_2^{-1})}^2
=\tfrac1{D^2}\left\{\begin{array}{cl}
D^2-G\Lambda_2,&\ \gamma_1\calH^\perp=\gamma_2\calH^\perp,\,\gamma_1\neq\gamma_2,\\
D-\Lambda_1,&\ \gamma_1\calH^\perp\neq\gamma_2\calH^\perp.
\end{array}\right.
\end{equation}
That is, $\calD$ is a DDS if and only if the magnitude of the inner product of two distinct vectors in its corresponding harmonic frame depends solely on whether their indices lie in a common coset of $\calH^\perp$.
In at least four special cases,
this theory leads to a useful class of harmonic frames.

The first case is when $\calH=\set{0}$,
namely when $\calD$ is a \textit{difference set} for $\calG$.
Here, \eqref{eq.DDS parameter relation} becomes \smash{$\Lambda_2=\frac{D(D-1)}{G-1}$} and $\calH^\perp=\hat{\calG}$, meaning~\eqref{eq.harmonic from DDS} holds if and only if
\smash{$\abs{\ip{\bfphi_{\gamma_1}}{\bfphi_{\gamma_2}}}^2=\frac{G-D}{D(G-1)}$} whenever $\gamma_1\neq\gamma_2$,
namely if and only if $\set{\bfphi_\gamma}_{\gamma\in\hat{\calG}}$ is an ETF for $\bbC^\calD$.
This is the well-known equivalence between difference sets and harmonic ETFs~\cite{Konig99,StrohmerH03,XiaZG05,DingF07}.
In the next two cases, $\calH\neq\set{0}$ and $\calD$ is either \textit{semiregular} or a \textit{relative difference set} (RDS) for $\calG$.

A DDS $\calD$ is semiregular if $D^2=G\Lambda_2$ and $D\neq\Lambda_1$.
By~\eqref{eq.DDS parameter relation} this equates to  \smash{$\Lambda_1=\tfrac{D(DH-G)}{G(H-1)}$} and $\emptyset\neq\calD\neq\calG$.
In this case,
\eqref{eq.harmonic from DDS} becomes
\begin{equation}
\label{eq.harmonic from semiregular DDS}
\abs{\ip{\bfphi_{\gamma_1}}{\bfphi_{\gamma_2}}}^2
=\left\{\begin{array}{cl}
0,&\ \gamma_1\calH^\perp=\gamma_2\calH^\perp,\,\gamma_1\neq\gamma_2,\\
\tfrac{H(G-D)}{DG(H-1)},&\ \gamma_1\calH^\perp\neq\gamma_2\calH^\perp.
\end{array}\right.
\end{equation}
Here every subsequence $\set{\bfphi_{\gamma'}}_{\gamma'\in\gamma\calH^\perp}$ of the harmonic frame that is indexed by a coset of $\calH^\perp$ is orthonormal.
Since every harmonic frame is tight,
this immediately implies that \smash{$\set{\calU_{\gamma\calH^\perp}}_{\gamma\calH^\perp\in\hat{\calG}/\calH}$},
\smash{$\calU_{\gamma\calH^\perp}:=\Span\set{\bfphi_{\gamma'}}_{\gamma'\in\gamma\calH^\perp}$}
defines a $\TFF(D,H,\frac GH)$ for $\bbC^\calD$.
In fact, as noted in~\cite{King16}, it is an $\ECTFF(D,H,\frac GH)$ for $\bbC^\calD$ since whenever $\gamma_1\calH^\perp\neq\gamma_2\calH^\perp$,
\eqref{eq.harmonic from DDS} implies that every entry of the
corresponding cross-Gram matrix $\bfPhi_{\gamma_1\calH^\perp}^*\bfPhi_{\gamma_2\calH^\perp}^{}$ has modulus \smash{$[\tfrac{H(G-D)}{DG(H-1)}]^{\frac12}$}, and so
\begin{equation*}
\norm{\bfPhi_{\gamma_1\calH^\perp}^*\bfPhi_{\gamma_2\calH^\perp}^{}}_\Fro^2
=(\tfrac GH)^2\tfrac{H(G-D)}{DG(H-1)}
=\tfrac{G(G-D)}{DH(H-1)}.
\end{equation*}
As expected, this equals the constant $\tfrac{R(NR-D)}{D(N-1)}$ from~\eqref{eq.chordal Welch} when $(D,N,R)=(D,H,\tfrac GH)$.

Meanwhile, an RDS for $\calG$ is a DDS for $\calG$ with $\Lambda_1=0$,
meaning no two distinct members of $\calD$ differ by a member of $\calH$.
In this case, \eqref{eq.DDS parameter relation} and~\eqref{eq.harmonic from DDS} become \smash{$\Lambda_2=\frac{D(D-1)}{G-H}$} and
\begin{equation}
\label{eq.harmonic from RDS}
\abs{\ip{\bfphi_{\gamma_1}}{\bfphi_{\gamma_2}}}^2
=\left\{\begin{array}{cl}
\frac{G-DH}{D(G-H)},&\ \gamma_1\calH^\perp=\gamma_2\calH^\perp,\,\gamma_1\neq\gamma_2,\\
\frac1D,&\ \gamma_1\calH^\perp\neq\gamma_2\calH^\perp,
\end{array}\right.
\end{equation}
respectively.
In this case, \smash{$[\frac{G-DH}{D(G-H)}]^{\frac12}$} is the Welch bound for $\#(\calH^\perp)=\frac GH$ vectors in $\bbC^\calD$,
meaning every subsequence $\set{\bfphi_{\gamma'}}_{\gamma'\in\gamma\calH^\perp}$ of the harmonic frame that is indexed by a coset of $\calH^\perp$ is an $\ETF(D,\frac GH)$ for $\bbC^\calD$.
Such \textit{mutually unbiased} ETFs were recently used to construct new ETFs~\cite{FickusM21}.

The fourth case is the intersection of the previous two:
comparing~\eqref{eq.harmonic from semiregular DDS} and~\eqref{eq.harmonic from RDS} reveals that a DDS $\calD$ is both semiregular and an RDS if and only if $G=DH$ where $1<D<G$.
As noted in~\cite{GodsilR09}, such a harmonic frame consists of $H$ \textit{mutually unbiased bases} for $\bbC^\calD$.

We now give a new way to construct ECTFFs via harmonic frames.
This method properly generalizes King's aforementioned construction of ECTFFs from semiregular DDSs~\cite{King16}.
In particular, we fully characterize when partitioning a harmonic frame's vectors according to the cosets of $\calH^\perp$ yields orthonormal bases for equichordal subspaces.
Our approach here parallels that of~\cite{FickusS20},
which refines some of the analysis of~\cite{FickusJKM18} concerning harmonic ETFs that partition into regular simplices for their span.
Like~\cite{FickusS20}, our ``harmonic ECTFF" characterization exploits a certain generalization of a difference set known as a \textit{difference family} (DF).
In this characterization, this DF is not for $\calG$ but rather for its subgroup $\calH$,
and so we briefly discuss them using more general notation.

Let $\calV$ be a finite abelian group of order $V$.
A sequence $\set{\calD_r}_{r\in\calR}$ of $R$ subsets of $\calV$, each of cardinality $K$, is a $\DF(V,K,\Lambda)$ for $\calV$ if
\begin{equation}
\label{eq.difference family}
\sum_{r\in\calR}(\tilde{\bfchi}_{\calD_r}*\bfchi_{\calD_r}^{})(v)
=\sum_{r\in\calR}\#\set{(d_1,d_2)\in\calD_r: d_1-d_2=v}
=\Lambda,
\quad\forall\ v\in\calV,\, v\neq0.
\end{equation}
That is, we sum (over $r$) the number of ways that $v\neq0$ can be written as a difference of members of $\calD_r$.
(We caution that this classical definition of a DF,
given in~\cite{Wilson72} for example,
is simpler and more restrictive than a common alternative~\cite{AbelB07} that permits ``short blocks.")
Note that summing~\eqref{eq.difference family} over all $v\neq0$ gives
$\Lambda(V-1)
=\sum_{r\in\calR}\#\set{(d_1,d_2)\in\calD_r: d_1-d_2\neq0}
=RK(K-1)$,
meaning that the number $R$ of subsets of $\calV$ that form a $\DF(V,K,\Lambda)$ is necessarily $R=\frac{\Lambda(V-1)}{K(K-1)}$.

For example,
$\set{\set{1,3,9},\set{2,6,5}}$ is a $\DF(13,3,1)$ for $\bbZ_{13}$ since every nonzero member of $\bbZ_{13}$ appears exactly once in the difference tables of $\set{1,3,9}$ and $\set{2,6,5}$, taken together:
\begin{equation*}
\begin{array}{c|ccc}
 -& 1& 3& 9\\
\hline
 1& 0&11& 5\\
 3& 2& 0& 7\\
 9& 8& 6& 0
\end{array}
\qquad
\begin{array}{c|ccc}
 -& 2& 6& 5\\
\hline
 2& 0& 9&10\\
 6& 4& 0& 1\\
 5& 3&12& 0
\end{array}\ .
\end{equation*}
This concept in hand, we give the following characterization of ``harmonic ECTFFs":

\begin{theorem}
\label{thm.ECTFF from DF}
Let $\hat{\calG}$ be the Pontryagin dual of a finite abelian group $\calG$ of order $G$.
Let $\calD$ be a nonempty $D$-element subset of $\calG$,
and define $\set{\bfphi_\gamma}_{\gamma\in\hat{\calG}}$ by \smash{$\bfphi_\gamma\in\bbC^\calD$}, \smash{$\bfphi_\gamma(d):=D^{-\frac12}\gamma(d)$}.
For any subgroup $\calH$ of $\calG$ of order $H$, let \smash{$\calH^\perp$} be its annihilator and define $\calD_g:=\calH\cap(\calD-g)$ for any $g\in\calG$.

Then \smash{$\set{\bfphi_{\gamma'}}_{\gamma'\in\gamma\calH^\perp}$} is orthonormal for any or all $\gamma\in\hat{\calG}$ if and only if the cardinality of \smash{$\calD_g$} is independent of $g$.
In this case, \smash{$\set{\calU_{\gamma\calH^\perp}}_{\gamma\calH^\perp\in\hat{\calG}/\calH^\perp}$},
\smash{$\calU_{\gamma\calH^\perp}:=\Span\set{\bfphi_{\gamma'}}_{\gamma'\in\gamma\calH^\perp}$} is a $\TFF(D,H,\frac GH)$ for $\bbC^\calD$ that is equichordal if and only if $\set{\calD_g}_{g+\calH\in\calG/\calH}$ is a difference family for $\calH$,
and is moreover equi-isoclinic if and only if each $\calD_g$ is a difference set of $\calH$.

In particular, if $\calD$ is a semiregular DDS for $\calG$,
then $\set{\calD_g}_{g+\calH\in\calG/\calH}$ is a difference family for $\calH$.
\end{theorem}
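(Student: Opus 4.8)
The plan is to check the two clauses of the definition~\eqref{eq.difference family} of a difference family for $\calH$ directly: that all blocks $\calD_g$ have a common size $K$, and that $\sum_{g+\calH\in\calG/\calH}(\tilde{\bfchi}_{\calD_g}*\bfchi_{\calD_g})(v)$ is independent of $v\in\calH\setminus\set{0}$. Here $\calD$ is understood to be a semiregular DDS with respect to the subgroup $\calH$ at hand, and only the first clause will use semiregularity. For it, I would observe that $\#\calD_g=\#(\calD\cap(g+\calH))=(\bfchi_\calH*\bfchi_\calD)(g)$, so that $\bfGamma^*(\bfchi_\calH*\bfchi_\calD)$ is the coordinatewise product $(\bfGamma^*\bfchi_\calH)(\bfGamma^*\bfchi_\calD)$, which by the PSF equals $H\bfchi_{\calH^\perp}\cdot(\bfGamma^*\bfchi_\calD)$; thus $g\mapsto\#\calD_g$ is constant if and only if $(\bfGamma^*\bfchi_\calD)(\gamma)=0$ for every $\gamma\in\calH^\perp$ with $\gamma\neq1$. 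For such $\gamma$, \eqref{eq.DFT of DDS autocorrelation} gives $\abs{(\bfGamma^*\bfchi_\calD)(\gamma)}^2=(D-\Lambda_1)+H(\Lambda_1-\Lambda_2)$, which the parameter relation~\eqref{eq.DDS parameter relation} rewrites as $D^2-G\Lambda_2$, and this vanishes exactly because $\calD$ is semiregular. Summing the resulting constant over the $\tfrac GH$ cosets then pins it to $K=\tfrac{DH}G$. (Alternatively, this step can be bypassed by quoting~\eqref{eq.harmonic from semiregular DDS}, which says each $\set{\bfphi_{\gamma'}}_{\gamma'\in\gamma\calH^\perp}$ is orthonormal, and invoking the first assertion of the theorem.)

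For the second clause I would use a counting bijection. Fix $v\in\calH$ with $v\neq0$ and a representative $g$ for each coset of $\calH$ in $\calG$. Any $(e_1,e_2)\in\calD\times\calD$ with $e_1-e_2=v$ has $e_1$ and $e_2$ in one common coset $g+\calH$, since $v\in\calH$, so $e_1-g,\,e_2-g\in\calH\cap(\calD-g)=\calD_g$ with $(e_1-g)-(e_2-g)=v$; conversely any $(d_1,d_2)\in\calD_g\times\calD_g$ with $d_1-d_2=v$ gives $(d_1+g,d_2+g)\in\calD\times\calD$ of difference $v$ whose first coordinate lies in $g+\calH$. These maps invert one another, so
\begin{equation*}
\sum_{g+\calH\in\calG/\calH}(\tilde{\bfchi}_{\calD_g}*\bfchi_{\calD_g})(v)
=\#\set{(e_1,e_2)\in\calD\times\calD: e_1-e_2=v}
=(\tilde{\bfchi}_\calD*\bfchi_\calD)(v)
=\Lambda_1,
\end{equation*}
using~\eqref{eq.autocorrelation of characteristic} for the middle step and~\eqref{eq.DDS autocorrelation}, evaluated at $v\in\calH\setminus\set{0}$, for the last. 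Since $\Lambda_1$ is independent of $v$, this makes $\set{\calD_g}_{g+\calH\in\calG/\calH}$ a $\DF(H,\tfrac{DH}G,\Lambda_1)$ for $\calH$; as a check, the block-count identity noted after~\eqref{eq.difference family} with $R=\tfrac GH$ and $K=\tfrac{DH}G$ returns $\Lambda=\tfrac{D(DH-G)}{G(H-1)}=\Lambda_1$.

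I expect nothing deep here; the only genuine use of the hypothesis is the equal-size step, so that is the crux, and the remaining care is in phrasing the bijection so that each difference-$v$ pair of $\calD$ is counted in exactly one block --- which works precisely because $v\in\calH$ confines $e_1,e_2$ to a single coset --- and in remembering that the blocks $\calD_g$ for different representatives of a coset are translates of one another, which does not affect the difference-family property. For readers preferring to skip the computations altogether, I would also mention the ``soft'' route: the discussion around~\eqref{eq.harmonic from semiregular DDS} already shows the associated $\TFF(D,H,\tfrac GH)$ is equichordal, so the equivalence proved in the body of the theorem (``equichordal if and only if $\set{\calD_g}$ is a difference family for $\calH$'') delivers the conclusion immediately.
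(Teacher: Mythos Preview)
Your argument for the ``In particular'' claim is correct, but it is not the route the paper takes. The paper proves this clause in one line via what you call the ``soft'' route: it cites King's result (discussed just before the theorem, culminating in~\eqref{eq.harmonic from semiregular DDS}) that a semiregular DDS yields an ECTFF, and then invokes the equivalence just established in the body of the theorem (equichordal $\Leftrightarrow$ $\set{\calD_g}$ is a difference family) to conclude. Your primary approach instead verifies the difference-family axioms directly, using the Fourier/PSF computation to show equal block sizes and the coset-partition bijection to show the constant difference count $\Lambda_1$. This is more hands-on and has the virtue of being self-contained---it does not rely on King's ECTFF result---and it explicitly identifies the DF parameters as $(H,\tfrac{DH}{G},\Lambda_1)$. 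The paper's route is terser and better highlights the logical structure (semiregular DDS $\Rightarrow$ ECTFF $\Rightarrow$ DF), which is the conceptual point being made. You correctly note both options; either is fine.
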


\begin{proof}
To simplify notation, for any $\gamma\in\hat{\calG}$ we re-index \smash{$\set{\bfphi_{\gamma'}}_{\gamma'\in\gamma\calH^\perp}$} as \smash{$\set{\bfphi_{\gamma\eta}}_{\eta\in\calH^\perp}$},
and let $\bfPhi_\gamma$ be its $\calD\times\calH^\perp$ synthesis operator.
That is, we index the subsequence of the harmonic frame \smash{$\set{\bfphi_\gamma}_{\gamma\in\hat{\calG}}$} corresponding to the $\gamma$th coset $\gamma\calH^\perp$ of $\calH^\perp$ by $\calH^\perp$ rather than by its coset.
Doing so simply permutes its vectors, and so has no effect on its orthonormality, its span \smash{$\calU_{\gamma\calH^\perp}$}, or the singular values of its cross-Gram matrix with another such subsequence.
For any $\gamma_1,\gamma_2\in\hat{\calG}$,
the latter is the $\calH^\perp\times\calH^\perp$ matrix $\bfPhi_{\gamma_1}^*\bfPhi_{\gamma_2}^{}$ which by~\eqref{eq.harmonic Gram}
has an $(\eta_1,\eta_2)$th entry of
\begin{equation*}
(\bfPhi_{\gamma_1}^*\bfPhi_{\gamma_2}^{})(\eta_1,\eta_2)
=\ip{\bfphi_{\gamma_1\eta_1}}{\bfphi_{\gamma_2\eta}}
=\tfrac1D(\bfGamma^*\bfchi_\calD)(\gamma_1^{}\gamma_2^{-1}\eta_1^{}\eta_2^{-1}).
\end{equation*}
This matrix is $\calH^\perp$-circulant,
and so is diagonalized by the DFT of $\calH^\perp$.
In particular, exploiting the well-known fact that $g+\calH\mapsto(\eta\mapsto\eta(g))$ is a well-defined isomorphism from $\calG/\calH$ onto the dual of $\calH^\perp$,
we can write the spectrum of $\bfPhi_{\gamma_1}^*\bfPhi_{\gamma_2}^{}$ as $\set{\lambda_{\gamma_1,\gamma_2}^{(g+\calH)}}_{g+\calH\in\calG/\calH}$ where
\begin{equation*}
\lambda_{\gamma_1,\gamma_2}^{(g+\calH)}
:=\sum_{\eta\in\calH^\perp}\overline{\eta(-g)}(\bfPhi_{\gamma_1}^*\bfPhi_{\gamma_2}^{})(\eta,1)
=\tfrac1D\sum_{\eta\in\calH^\perp}\eta(g)(\bfGamma^*\bfchi_\calD)(\gamma_1^{}\gamma_2^{-1}\eta).
\end{equation*}
Here for the sake of notational convenience later on, we have without loss of generality opted to correlate the $(g+\calH)$th eigenvalue of $\bfPhi_{\gamma_1}^*\bfPhi_{\gamma_2}^{}$ with the $-g$th character of $\calH^\perp$.
To simplify this expression, we now express $(\bfGamma^*\bfchi_\calD)(\gamma_1^{}\gamma_2^{-1}\eta)$ as a summation and then interchange summations:
\begin{equation*}
\lambda_{\gamma_1,\gamma_2}^{(g+\calH)}
=\tfrac1D\sum_{\eta\in\calH^\perp}\eta(g)\sum_{g'\in\calG}\overline{(\gamma_1^{}\gamma_2^{-1}\eta)(g')}\bfchi_\calD(g')
=\tfrac1D\sum_{g'\in\calG}(\gamma_1^{-1}\gamma_2^{})(g')\bfchi_\calD(g')\sum_{\eta\in\calH^\perp}\eta(g-g').
\end{equation*}
We now simplify the inner sum using the PSF and the fact that $\calH$ is closed under inverses:
\begin{equation*}
\sum_{\eta\in\calH^\perp}\eta(g-g')
=\sum_{\gamma\in\hat{\calG}}\bfchi_{\calH^\perp}(\gamma)\gamma(g-g')
=(\bfGamma\bfchi_{\calH^\perp})(g-g')
=\tfrac GH\bfchi_\calH(g-g')
=\tfrac GH\bfchi_\calH(g'-g).
\end{equation*}
Substituting this into the previous sum,
and making the change of variables $h=g'-g$ gives
\begin{equation*}
\lambda_{\gamma_1,\gamma_2}^{(g+\calH)}
=\tfrac{G}{DH}\sum_{g'\in\calG}(\gamma_1^{-1}\gamma_2^{})(g')\bfchi_\calD(g')\bfchi_\calH(g'-g)
=\tfrac{G}{DH}\sum_{h\in\calG}(\gamma_1^{-1}\gamma_2^{})(h+g)\bfchi_\calD(h+g)\bfchi_\calH(h).
\end{equation*}
To proceed,
we recall that $\calD_g:=\calH\cap(\calD-g)\subseteq\calH$ and identify $\calG/\calH^\perp$ with the dual of $\calH$ via the isomorphism $\gamma\calH^\perp\mapsto(h\mapsto\gamma(h))$.
Under this identification, the character table of $\calH$ becomes the $\calH\times(\calG/\calH^\perp)$ matrix $\bfGamma_\calH$ with entries $\bfGamma_\calH(h,\gamma\calH^\perp)=\gamma(h)$,
and the above expression becomes
\begin{equation}
\label{eq.harmonic ECTFF cross Gram eigenvalues}
\lambda_{\gamma_1,\gamma_2}^{(g+\calH)}
=\tfrac{G}{DH}(\gamma_1^{-1}\gamma_2^{})(g)
\sum_{h\in\calH}\overline{(\gamma_1^{}\gamma_2^{-1})(h)}\bfchi_{\calD_g}(h)
=\tfrac{G}{DH}(\gamma_1^{-1}\gamma_2^{})(g)(\bfGamma_\calH^*\bfchi_{\calD_g})(\gamma_1^{}\gamma_2^{-1}\calH^\perp).
\end{equation}
In particular, for any $\gamma\in\hat{\calG}$
we have that \smash{$\set{\bfphi_{\gamma\eta}}_{\eta\in\calH^\perp}$} is orthonormal if and only if $\bfPhi_\gamma^*\bfPhi_\gamma^{}=\bfI$,
which by~\eqref{eq.harmonic ECTFF cross Gram eigenvalues} is equivalent to having
\begin{equation}
\label{eq.harmonic orthonormality}
1
=\lambda_{\gamma,\gamma}^{(g+\calH)}\tfrac{G}{DH}(\bfGamma_\calH^*\bfchi_{\calD_g})(1\calH^\perp)
=\tfrac{G}{DH}\sum_{h\in\calH}\bfchi_{\calD_g}(h)
=\tfrac{G}{DH}\#(\calD_g)
\end{equation}
for all $g+\calH\in\calG/\calH$.
Note that this condition is independent of $\gamma$:
if \smash{$\set{\bfphi_{\gamma\eta}}_{\eta\in\calH^\perp}$} is orthonormal for any $\gamma\in\hat{\calG}$ then it is orthonormal for all such $\gamma$.
We caution however that while $g+\calD_g=g+[\calH\cap(\calD-g)]=(g+\calH)\cap\calD$ is agnostic with respect to one's choice of coset representative,
which in turn implies that $\#(\calD_g)$ is as well,
the set $\calD_g$ itself is not:
if $g_1+\calH=g_2+\calH$,
then writing $g_2=g_1+h$ for some $h\in\calH$ gives
\begin{equation*}
\calD_{g_2}
=\calH\cap(\calD-g_2)
=\calH\cap(\calD-g_1-h)
=[\calH\cap(\calD-g_1)]-h
=\calD_{g_1}-h,
\end{equation*}
meaning $\calD_{g_1}$ and $\calD_{g_2}$ might differ by a shift.
Regardless, partitioning $\calD$ according to the cosets of $\calH$ gives
$\calD
=\sqcup_{g+\calH\in\calG/\calH}[\calD\cap(g+\calH)]
=\sqcup_{g+\calH\in\calG/\calH}(g+\calD_g)$,
and so $D=\sum_{g+\calH\in\calG/\calH}\#(\calD_g)$.
From this perspective, \eqref{eq.harmonic orthonormality} holds if and only if $\#(\calD_g)$ is constant over all $g\in\calG$,
or equivalently, over any choice of coset representatives of $\calG/\calH$.

For the rest of the proof, we assume that~\eqref{eq.harmonic orthonormality} holds,
namely that \smash{$\set{\bfphi_{\gamma\eta}}_{\eta\in\calH^\perp}$} is orthonormal for all $\gamma\in\hat{\calG}$.
Since \smash{$\set{\bfphi_\gamma}_{\gamma\in\hat{\calG}}$} is a tight frame for $\bbC^\calD$, \smash{$\set{\calU_{\gamma\calH^\perp}}_{\gamma\calH^\perp\in\hat{\calG}/\calH^\perp}$},
\smash{$\calU_{\gamma\calH^\perp}:=\Span\set{\bfphi_{\gamma\eta}}_{\eta\in\calH^\perp}$} is a $\TFF(D,H,\frac GH)$ for $\bbC^\calD$.
To characterize when this TFF is equichordal and equi-isoclinic,
we consider the singular values of off-diagonal cross-Gram matrices.
Here since $\bfPhi_{\gamma_1}^*\bfPhi_{\gamma_2}^{}$ is $\calH^\perp$-circulant it is also normal, meaning the squares of its singular values are simply the squared-moduli of its eigenvalues
$\set{\lambda_{\gamma_1,\gamma_2}^{(g+\calH)}}_{g+\calH\in\calG/\calH}$.
Here since~\eqref{eq.harmonic ECTFF cross Gram eigenvalues} gives
\begin{equation*}
\abs{\lambda_{\gamma_1,\gamma_2}^{(g+\calH)}}^2
=\tfrac{G^2}{D^2H^2}\abs{(\bfGamma_\calH^*\bfchi_{\calD_g})(\gamma_1^{}\gamma_2^{-1}\calH^\perp)}^2
=\tfrac{G^2}{D^2H^2}[\bfGamma_\calH^*(\tilde{\bfchi}_{\calD_g}*\bfchi_{\calD_g})](\gamma_1^{}\gamma_2^{-1}\calH^\perp),
\end{equation*}
we see that our TFF is equi-isoclinic if and only if \smash{$[\bfGamma_\calH^*(\tilde{\bfchi}_{\calD_g}*\bfchi_{\calD_g})](\gamma\calH^\perp)$} is constant over all $g\in\calG$ and \smash{$\gamma\in\hat{\calG}$}, \smash{$\gamma\notin\calH^\perp$}.
Since $\bfGamma_\calH^*(a\bfdelta_0+b\bfchi_\calH)
=a\bfchi_{\hat{\calG}/\calH^\perp}+bH\bfdelta_{1\calH^\perp}$ for any $a,b\in\bbC$,
this occurs if and only if $(\tilde{\bfchi}_{\calD_g}*\bfchi_{\calD_g})(h)$ is constant over all $g\in\calG$, $h\in\calH$, $h\neq0$, namely if and only if each $\calD_g$ is a difference set for $\calH$.
Similarly, our TFF is equichordal if and only if
\begin{equation*}
\norm{\bfPhi_{\gamma\gamma'}^*\bfPhi_{\gamma}^{}}_\Fro^2
=\sum_{g+\calH\in\calG/\calH}\abs{\lambda_{\gamma\gamma',\gamma'}^{(g+\calH)}}^2
=\biggbracket{\bfGamma_\calH^*\biggparen{\,\sum_{g+\calH\in\calG/\calH}\tilde{\bfchi}_{\calD_g}*\bfchi_{\calD_g}}}(\gamma\calH^\perp)
\end{equation*}
is constant over all $\gamma\notin\calH^\perp$.
This occurs if and only if
$\sum_{g+\calH\in\calG/\calH}(\tilde{\bfchi}_{\calD_g}*\bfchi_{\calD_g})(h)$ is constant over all $h\neq 0$,
which by~\eqref{eq.difference family} equates to $\set{\calD_g}_{g+\calH\in\calG/\calH}$ being a difference family for $\calH$.
To be clear, this family consists of one set $\calD_g$ for each distinct coset of $\calH$, where $g$ can be any representative from that coset:
if $g_1$ and $g_2$ differ by an element of $\calH$,
the sets $\calD_{g_1}$ and $\calD_{g_2}$ differ by at most a shift,
and so the autocorrelations of $\bfchi_{\calD_{g_1}}$ and $\bfchi_{\calD_{g_2}}$ are identical.

In particular, when $\calD$ is a semiregular DDS for $\calG$ then,
as proven in~\cite{King16} and discussed above,
\smash{$\set{\calU_{\gamma\calH^\perp}}_{\gamma\calH^\perp\in\hat{\calG}/\calH^\perp}$} is an ECTFF for $\bbC^\calD$,
and so $\set{\calD_g}_{g+\calH\in\calG/\calH}$ is necessarily a DF for $\calH$.
\end{proof}

The previous result can be used to construct an ECTFF from any difference family.
To elaborate,
for any $\DF(V,K,\Lambda)$ for some abelian group $\calV$,
let $\calG=\calV\times\calR$ where $\calR$ is an abelian group of order \smash{$R=\frac{\Lambda(V-1)}{K(K-1)}$}.
Index the sets of this DF for $\calV$ as $\set{\tilde{\calD}_r}_{r\in\calR}$,
let $\calH=\calV\times\set{0}$,
and let $\calD=\sqcup_{r\in\calR}(\tilde{\calD}_r\times\set{r})$.
Following the recipe of Theorem~\ref{thm.ECTFF from DF} then recovers an isomorphic copy $\set{\tilde{\calD}_r\times\set{0}}_{r\in\calR}$ of this DF $\set{\tilde{\calD}_r}_{r\in\calR}$ for our isomorphic copy $\calH$ of $\calV$:
choosing $\set{(0,r)}_{r\in\calR}$ to be the coset of representatives of $\calG/\calH=(\calV\times\calR)/(\calV\times\set{0})$, we have
\begin{equation*}
\calD_{(0,r)}
=\calH\cap[\calD-(0,r)]
=(\calV\times\set{0})\cap\bigbracket{\sqcup_{r'\in\calR}(\tilde{\calD}_{r'}\times\set{r'-r})}
=\tilde{\calD}_r\times\set{0}
\end{equation*}
for any $r\in\calR$.
Applying Theorem~\ref{thm.ECTFF from DF} to $\calD$ thus yields an $\ECTFF(D,N,R)$ where \begin{equation}
\label{eq.harmonic ECTFF params from DF params}
D=\#(\calD)=KR=\tfrac{\Lambda(V-1)}{K-1},
\quad
N=\#(\calH)=\#(\calV)=V,
\quad
R=\tfrac{G}{H}=\tfrac{\Lambda(V-1)}{K(K-1)}.
\end{equation}
For example,
since $\set{\set{1,3,9},\set{2,6,5}}$ is a $\DF(13,3,1)$ for $\bbZ_{13}$,
applying Theorem~\ref{thm.ECTFF from DF} to the subset   $\calD=\set{(1,0),(3,0),(9,0),(2,1),(6,1),(5,1)}$ of $\bbZ_{13}\times\bbZ_2$ yields an $\ECTFF(6,13,2)$ for $\bbC^\calD$.
This fact allows us to produce numerous ECTFFs from classical constructions of DFs.
For example,
for any positive integers $Q\geq K$ and $\Lambda$ such that $Q$ is a prime power and $K(K-1)$ divides $\Lambda(Q-1)$,
Theorems~7, 8 and~5 of \cite{Wilson72} give $\DF(Q,K,\Lambda)$ for the additive group of $\bbF_Q$ in the following three cases, respectively:
when $2\Lambda$ is divisible by either $K-1$ or $K$, when $\Lambda\geq K(K-1)$,
or when $Q>[\frac12K(K-1)]^{K(K-1)}$.
Combining these facts with~\eqref{eq.harmonic ECTFF params from DF params} immediately yields the following result:

\begin{corollary}
\label{cor.ECTFFs from known DFs}
Any $\DF(V,K,\Lambda)$ yields an
$\ECTFF(\tfrac{\Lambda(V-1)}{K-1}, V, \tfrac{\Lambda(V-1)}{K(K-1)})$.
In particular, for any prime power $Q$ and positive integers $K$ and $R$ such that $K\leq Q$, an $\ECTFF(KR,Q,R)$ exists whenever both $\frac{RK(K-1)}{Q-1}\in\bbZ$ and any one of the following three additional conditions is met~\cite{Wilson72}:
\begin{enumerate}
\renewcommand{\labelenumi}{(\roman{enumi})}
\item
either $\tfrac{2RK}{Q-1}\in\bbZ$ or $\tfrac{2R(K-1)}{Q-1}\in\bbZ$,\smallskip
\item
$R\geq Q-1$,\smallskip
\item
$Q>[\frac12K(K-1)]^{K(K-1)}$.
\end{enumerate}
\end{corollary}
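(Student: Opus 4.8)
The first sentence is immediate from the construction described in the paragraph preceding the statement. Given a $\DF(V,K,\Lambda)$ for an abelian group $\calV$, set $\calG=\calV\times\calR$ with $\#(\calR)=R=\tfrac{\Lambda(V-1)}{K(K-1)}$, let $\calH=\calV\times\set{0}$, index the blocks as $\set{\tilde{\calD}_r}_{r\in\calR}$, and take $\calD=\sqcup_{r\in\calR}(\tilde{\calD}_r\times\set{r})$. As computed there, choosing $\set{(0,r)}_{r\in\calR}$ as coset representatives of $\calG/\calH$ gives $\calD_{(0,r)}=\tilde{\calD}_r\times\set{0}$, so $\set{\calD_g}_{g+\calH\in\calG/\calH}$ is an isomorphic copy of the given difference family for $\calH$; in particular $\#(\calD_g)=K$ is constant in $g$. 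Theorem~\ref{thm.ECTFF from DF} then yields an equichordal $\TFF(D,N,R)$ with the parameters recorded in~\eqref{eq.harmonic ECTFF params from DF params}, namely $(D,N,R)=(\tfrac{\Lambda(V-1)}{K-1},V,\tfrac{\Lambda(V-1)}{K(K-1)})$, which proves the first claim.

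For the ``in particular'' claim I would apply this with $\calV$ the additive group of $\bbF_Q$ (so $V=Q$) and with the forced value $\Lambda:=\tfrac{RK(K-1)}{Q-1}$; here the hypothesis $\tfrac{RK(K-1)}{Q-1}\in\bbZ$ is exactly the statement that this $\Lambda$ is an integer (and it is then positive, since we may assume $K\geq 2$; the degenerate case $K=1$ is the trivial $\ECTFF(R,Q,R)$, which exists for all prime powers $Q$). With this $\Lambda$ one has $\Lambda(Q-1)=RK(K-1)$, so $K(K-1)\mid\Lambda(Q-1)$, and substituting into~\eqref{eq.harmonic ECTFF params from DF params} gives $\tfrac{\Lambda(V-1)}{K-1}=KR$ and $\tfrac{\Lambda(V-1)}{K(K-1)}=R$, so the ECTFF produced above has parameters $(KR,Q,R)$. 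It remains only to exhibit a $\DF(Q,K,\Lambda)$ for $\bbF_Q$.

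For this I would quote Theorems~7, 8 and~5 of~\cite{Wilson72}: each assumes $Q$ is a prime power and $K(K-1)\mid\Lambda(Q-1)$ (both already in hand) together with, respectively, the hypotheses that $K-1$ or $K$ divides $2\Lambda$, that $\Lambda\geq K(K-1)$, and that $Q>[\tfrac12K(K-1)]^{K(K-1)}$. Under the substitution $\Lambda=\tfrac{RK(K-1)}{Q-1}$ we have $\tfrac{2\Lambda}{K-1}=\tfrac{2RK}{Q-1}$ and $\tfrac{2\Lambda}{K}=\tfrac{2R(K-1)}{Q-1}$, so ``$K-1$ or $K$ divides $2\Lambda$'' is exactly condition~(i); $\Lambda\geq K(K-1)$ rearranges to $R\geq Q-1$, which is condition~(ii); and the last hypothesis is condition~(iii) verbatim. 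In any of the three cases a $\DF(Q,K,\Lambda)$ for $\bbF_Q$ exists, and the first part of the corollary converts it into an $\ECTFF(KR,Q,R)$. The argument is essentially bookkeeping; the only point requiring care is the arithmetic translation of Wilson's hypotheses into (i)--(iii) via the forced value of $\Lambda$, and I do not anticipate a genuine obstacle.
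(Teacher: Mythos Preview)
Your proposal is correct and follows essentially the same approach as the paper: the first claim is derived from the construction and Theorem~\ref{thm.ECTFF from DF} exactly as in the paragraph preceding the corollary, and the ``in particular'' claim is obtained by specializing to $\calV=\bbF_Q$, setting $\Lambda=\tfrac{RK(K-1)}{Q-1}$, and invoking Theorems~7, 8 and~5 of~\cite{Wilson72}. Your explicit verification that Wilson's hypotheses translate into conditions~(i)--(iii) under this substitution is slightly more detailed than what the paper writes out, but the argument is the same.
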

To be clear, many other $\DF(V,K,\Lambda)$ are known~\cite{AbelB07},
and we leave it to the interested reader to derive the
$(D,N,R)=(\tfrac{\Lambda(V-1)}{K-1}, V, \tfrac{\Lambda(V-1)}{K(K-1)})$ parameters of the resulting ECTFFs.

We conclude this section by noting that Corollary~\ref{cor.ECTFFs from known DFs} properly generalizes King's aforementioned method of constructing an ECTFF from a semiregular DDS.
When $\calD$ is a semiregular $\DDS(\frac GH,H,D,\Lambda_1,\Lambda_2)$ for $\calG$,
Theorem~\ref{thm.ECTFF from DF} produces an \smash{$\ECTFF(D,H,\frac GH)$} for $\bbC^\calD$ in the exact same manner as~\cite{King16}.
As noted in Theorem~\ref{thm.ECTFF from DF}, it more generally produces such an $\ECTFF$ whenever $\set{\calD_g}_{g+\calH\in\calG/\calH}$ is a $\DF(V,K,\Lambda)$ for $\calH$.
Here, since this DF is obtained by ``partitioning" $\calD$ into \smash{$R=\frac GH$} subsets of $\calH$ of equal cardinality, we necessarily have
\begin{equation*}
V=H,
\quad
K=\tfrac{DH}{G},
\quad
\Lambda
=\tfrac{RK(K-1)}{V-1}
=\tfrac1{H-1}\tfrac{G}{H}\tfrac{DH}{G}(\tfrac{DH}{G}-1)
=\tfrac{D(DH-G)}{G(H-1)}.
\end{equation*}
And though, as noted above, the subsets of any $\DF(V,K,\Lambda)$ can be assembled together to form a subset $\calD$ of $\calG$ that ``partitions" in this way,
it is not necessarily a semiregular $\DDS(\frac GH,H,D,\Lambda_1,\Lambda_2)$.
Indeed, as noted above, such a DDS can only exist if both \smash{$\Lambda_1=\tfrac{D(DH-G)}{G(H-1)}$} and \smash{$\Lambda_2=\frac{D^2}{G}$} are integers.
Though $\Lambda_1=\Lambda$ is always an integer here, \smash{$\Lambda_2=\frac{D^2}{G}$} is not.
For example, the aforementioned $\DS(13,3,1)$ yields an $\ECTFF(6,13,2)$ but no semiregular $\DDS(\frac GH,H,D,\Lambda_1,\Lambda_2)$ with $(D,H,\frac GH)=(6,13,2)$ exists since \smash{$\Lambda_2=\frac{D^2}{G}=\frac{6^2}{2(13)}=\frac{18}{13}\notin\bbZ$.}
That is, Corollary~\ref{cor.ECTFFs from known DFs} can produce an $\ECTFF(6,13,2)$ whereas the method of~\cite{King16} cannot.
Intuitively, this is because the method of~\cite{King16} requires the harmonic frame to be nicer than it needs to be,
having cross-Gram matrices with entries of constant modulus,
whereas only ones of constant Frobenius norms are required.

\section{The minimal points of the orbits of known ECTFFs}

In this section we apply the tools of Section~3 to known constructions of (real and/or equichordal and/or equi-isoclinic) $\TFF(D,N,R)$,
including the ``harmonic ECTFFs" of Section~4.
By Theorem~\ref{thm.summary},
the existence of a (real and/or equichordal and/or equi-isoclinic) $\TFF(D,N,R)$ is fully settled whenever $f(D,N,R):=DNR-D^2-NR^2<0$.
This same theorem gives that every $\TFF(D,N,R)$ with $f(D,N,R)\geq0$ arises (via iterated alternating Naimark complements) from a $\TFF(D_0,N,R_0)$ where $(D_0,N,R_0)$ is minimal in the sense of~\eqref{eq.minimal}.
In this section, we develop a catalog of the minimal $(D_0,N,R_0)$ of known $\ECTFF(D,N,R)$ with $f(D,N,R)\geq0$.
This provides a means of certifying the novelty of a newly discovered ECTFF:
if $f(D,N,R)\geq0$ and its corresponding minimal $(D_0,N,R_0)$ does not yet appear in this catalog,
then this ECTFF is new, being unobtainable from any previously known example in a Naimark-spatial way.
(In fact, by Theorem~\ref{thm.f>0}, taking iterated alternating Naimark and spatial complements of any such ECTFF with $N>4$ yields an infinite family of them,
every one of which is new.)
We begin by revisiting the TFF existence test of~\cite{CasazzaFMWZ11} from this new perspective.
We find that a real $\TFF(D,N,R)$ exists whenever $f(D,N,R)\geq0$,
and so also whenever a complex one does.
We then compile our catalog.

\subsection{The existence of real TFFs}

The TFF existence test of~\cite{CasazzaFMWZ11} fully characterizes the existence of
(complex) $\TFF(D,N,R)$.
In short, it turns out that certain $(D,N,R)$ are ``ambiguous" in the sense that they satisfy certain necessary conditions on the existence of a $\TFF(D,N,R)$ yet fail to meet slightly stronger conditions that suffice to guarantee existence.
It turns out by taking iterated alternating Naimark and spatial complements, one can always convert an ambiguous $(D,N,R)$ into an unambiguous one.
Here, the sufficient conditions rely on explicit construction,
specifically one that involves complex modulation of a real matrix produced via \textit{spectral tetris}.
In the following result, we reinterpret this characterization from the perspective of Section~3, yielding a simpler existence test that exploits spectral tetris in a new way that preserves realness:

\begin{theorem}
\label{thm.TFF existence}
For any $(D,N,R)\in\bbZ^3$ with $D>0$, $N>1$, $R>0$,
a $\TFF(D,N,R)$ exists if and only if either $f(D,N,R):=DNR-D^2-NR^2\geq0$ or there exists $(D_0,N,R_0)\in\Orb(D,N,R)$ with $0<R_0\in\set{\frac{D_0}N,D_0}$.
Moreover, if a $\TFF(D,N,R)$ exists then a real $\TFF(D,N,R)$ exists.
\end{theorem}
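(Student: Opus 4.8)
The plan is to handle the two halves of the biconditional separately and then address realness. For the ``only if'' direction, I would suppose a $\TFF(D,N,R)$ exists and that $f(D,N,R) < 0$; I must then produce a $(D_0,N,R_0) \in \Orb(D,N,R)$ with $0 < R_0 \in \set{\frac{D_0}{N}, D_0}$. Since $f(D,N,R) < 0$, Theorem~\ref{thm.N=2,3} (when $N \in \set{2,3}$) or Theorem~\ref{thm.f<0} (when $N \geq 4$, $f<0$) applies. In the $N\geq4$ case, Theorem~\ref{thm.f<0} produces a unique ``corner'' $(D_0,N,R_0) \in \Orb^+(D,N,R)$, and because a $\TFF(D,N,R)$ exists, the contrapositive of the nonexistence clause in Theorem~\ref{thm.f<0} forces $R_0 \in \set{\frac{D_0}{N}, D_0}$. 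For $N \in \set{2,3}$, Theorem~\ref{thm.N=2,3} pins $R$ down to one of a handful of ratios of $D$, and in each case one sees directly from the explicit orbit descriptions~\eqref{eq.orbit when N=2}--\eqref{eq.orbit when N=3} (or from~\eqref{eq.NS path N=2}--\eqref{eq.NS path N=3 case 2}) that $\Orb(D,N,R)$ contains a triple with $R_0 \in \set{\frac{D_0}{N},D_0}$.

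For the ``if'' direction, there are two subcases. If $f(D,N,R) \geq 0$, I would split further: when $f(D,N,R) < 0$ is excluded, either $f = 0$, in which case Theorem~\ref{thm.f=0} gives $N = 4$, $D = 2R$ and it suffices to construct a real $\TFF(2R,4,R)$ directly; or $f > 0$, in which case Theorem~\ref{thm.f>0} hands us the unique minimal $(D_0,N,R_0)$ satisfying~\eqref{eq.minimal}, and it suffices to construct a real $\TFF(D_0,N,R_0)$ and propagate it through the orbit via Naimark and spatial complements (both of which preserve realness, per Section~2). If instead $f(D,N,R) < 0$ but some $(D_0,N,R_0) \in \Orb(D,N,R)$ has $R_0 \in \set{\frac{D_0}{N}, D_0}$, then a trivial real $\EITFF(D_0,N,R_0)$ exists (Section~2), and again iterated alternating Naimark and spatial complements build a real $\TFF(D,N,R)$ from it. In both subcases the orbit-propagation step is immediate from the material of Section~2 and Theorems~\ref{thm.f=0}, \ref{thm.f<0}, \ref{thm.f>0}; the work is entirely in supplying, for each minimal parameter triple that can occur, an explicit \emph{real} construction.

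The main obstacle — and the technical heart of the proof — is producing a real $\TFF(D_0,N,R_0)$ for every minimal triple arising from an $f(D,N,R) \geq 0$ parameter set, equivalently every $(D_0,N,R_0) \in \bbZ^3$ with $0 < \tfrac{2D_0}{N} \leq R_0 \leq \tfrac{D_0}{2}$. This is where spectral tetris enters: the construction of~\cite{CasazzaFMWZ11} for these ``unambiguous'' parameters builds a real sparse matrix with the prescribed row and column norm constraints, and in the minimal regime the required inequality (roughly, that each $R_0 \times R_0$ diagonal block has enough room, i.e. $NR_0 \geq 2D_0$, which is exactly the first half of~\eqref{eq.minimal}) guarantees the spectral tetris recursion runs to completion over $\bbR$ without needing the complex modulation step that~\cite{CasazzaFMWZ11} invokes in general. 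I would therefore: (i) reduce via the orbit theorems to the minimal case; (ii) verify that minimality~\eqref{eq.minimal} implies the spectral-tetris feasibility condition $NR_0 \geq 2D_0$; (iii) cite or adapt the spectral tetris construction of~\cite{CasazzaFMWZ11} to yield a real $\TFF(D_0,N,R_0)$; and (iv) push realness back out along $\nu$ and $\sigma$. The genuinely delicate point is step (ii)--(iii): confirming that the minimality constraints are exactly strong enough to make the real spectral tetris algorithm terminate, so that no complexification is ever required.
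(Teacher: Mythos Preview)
Your approach is essentially the same as the paper's: reduce via the orbit theorems to either a trivial endpoint (when $f<0$), the degenerate $(2R,4,R)$ case (when $f=0$), or the minimal triple $(D_0,N,R_0)$ satisfying~\eqref{eq.minimal} (when $f>0$), construct a real TFF there, and propagate along the orbit.

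The one place your description is imprecise enough to cause trouble is step~(ii)--(iii). You identify the relevant inequality as $NR_0\geq 2D_0$, the first half of~\eqref{eq.minimal}, and say this ``guarantees the spectral tetris recursion runs to completion over $\bbR$.'' That inequality only guarantees that spectral tetris produces a real unit-norm tight frame $\set{\bfpsi_m}_{m=1}^{NR_0}$ for $\bbR^{D_0}$; it does \emph{not} by itself make consecutive blocks of $R_0$ vectors orthonormal (which is exactly why~\cite{CasazzaFMWZ11} modulates). The paper's fix is a re-indexing: set $\bfphi_{n,r}:=\bfpsi_{N(r-1)+n}$, so the $R_0$ vectors in the $n$th block are spaced $N$ apart in the original sequence. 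The spectral-tetris output has $\ip{\bfpsi_{m_1}}{\bfpsi_{m_2}}=0$ whenever $|m_1-m_2|\geq\lfloor\tfrac{NR_0}{D_0}\rfloor+3$, so what is actually needed is $N\geq\lfloor\tfrac{NR_0}{D_0}\rfloor+3$. This follows from \emph{both} halves of~\eqref{eq.minimal} together with the strictness forced by $f(D_0,N,R_0)>0$: since $\tfrac{R_0}{D_0}\leq\tfrac12$ and $\tfrac2N\leq\tfrac12$ with at least one strict, $\tfrac2N+\tfrac{R_0}{D_0}<1$, hence $N-2>\tfrac{NR_0}{D_0}$. So the second half of~\eqref{eq.minimal}, which you do not mention, is equally essential.
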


\begin{proof}
If a $\TFF(D,N,R)$ exists and $f(D,N,R)<0$ then Theorem~\ref{thm.summary} gives that it can be obtained in the Naimark-spatial way from some $\TFF(D_0,N,R_0)$ with $R_0\in\set{\frac{D_0}N,D_0}$,
immediately implying $(D_0,N,R_0)\in\Orb(D,N,R)$ and $R_0>0$.

Having proven the ``only if" part of the result,
note that in order to prove both its ``if" part as well as its final claim,
it suffices to show that a real $\TFF(D,N,R)$ exists whenever either $f(D,N,R)\geq0$ or $\Orb(D,N,R)$ contains some point $(D_0,N,R_0)$ that satisfies $0<R_0\in\set{\frac{D_0}N,D_0}$.
We consider the latter case first.
Here a trivial $\TFF(D_0,N,R_0)$ exists, $(D,N,R)\in\Orb(D_0,N,R_0)$
and $f(D,N,R)=f(D_0,N,R_0)<0$
since either $f(NR_0,N,R_0)=-NR_0^2$ or $f(R_0,N,R_0)=-R_0^2$.
(In particular, this can only happen when either $-f(D,N,R)$ or $-\frac1N f(D,N,R)$ is a positive perfect square.)
If $N=2$ then, depending on whether $R_0=\frac{D_0}2$ or $R_0=D_0$,
\eqref{eq.orbit when N=2} becomes either
\begin{align*}
\Orb(2R_0,2,R_0)
&=\set{
(2R_0,2,R_0),
(0,2,R_0),
(0,2,-R_0),
(-2R_0,2,-R_0)},\\
\Orb(R_0,2,R_0)
&=\set{
(R_0,2,R_0),
(R_0,2,0),
(-R_0,2,0),
(-R_0,2,-R_0)}.
\end{align*}
Since $D>0$, $R>0$ and $(D,N,R)\in\Orb(D_0,N,R_0)$,
we thus have $(D,N,R)$ is either $(2R_0,2,R_0)$ or $(R_0,2,R_0)$.
In either case, a trivial real $\TFF(D,N,R)$ exists.
Similarly if $N=3$ and $R_0=\frac{D_0}3$ then~\eqref{eq.orbit when N=3} gives that $\Orb(3R_0,3,R_0)$ is
\begin{equation*}
\set{(3R_0,3,R_0),
(0,3,R_0),
(0,3,-R_0),
(-3R_0,3,-R_0),
(-3R_0,3,-2R_0),
(3R_0,3,2R_0)},
\end{equation*}
implying $(D,N,R)$ is either $(3R_0,3,R_0)$ or $(3R_0,3,2R_0)$,
and so by Theorem~\ref{thm.N=2,3} that a real $\TFF(D,N,R)$ exists.
When $N=3$ and $R_0=D_0$, \eqref{eq.orbit when N=3} instead gives that $\Orb(R_0,3,R_0)$ is
\begin{equation*}
\set{(2R_0,3,R_0),
(R_0,3,R_0),
(R_0,3,0),
(-R_0,3,0),
(-R_0,3,-R_0),
(-2R_0,3,-R_0)},
\end{equation*}
implying $(D,N,R)$ is either $(2R_0,3,R_0)$ or $(R_0,3,R_0)$,
at which point Theorem~\ref{thm.N=2,3} again implies that a real $\TFF(D,N,R)$ exists.
If $N\geq 4$ then since $f(D_0,N,R_0)<0$,
Theorem~\ref{thm.f<0} gives a unique $(\tilde{D}_0,N,\tilde{R}_0)\in\Orb^+(D_0,N,R_0)$ that satisfies $\tilde{D}_0\leq D'$ and $\tilde{R}_0\leq R'$ for all $(D',N,R')\in\Orb^+(D_0,N,R_0)$.
Here, since $0<R_0\in\set{\frac{D_0}N,D_0}$, a real $\TFF(D_0,N,R_0)$ exists and $(D_0,N,R_0)\in\Orb^+(D_0,N,R_0)$,
at which point Theorem~\ref{thm.f<0} further implies that $(\tilde{D}_0,N,\tilde{R}_0)$ is the only $(D',N,R')\in\Orb^+(D_0,N,R_0)$ with $R'\in\set{\frac{D'}N,D'}$,
and so $(\tilde{D}_0,N,\tilde{R}_0)=(D_0,N,R_0)$.
Here, since $(D,N,R)$ satisfies $D>0$, $R>0$ and $(D,N,R)\in\Orb(D_0,N,R_0)$,
we have $(D,N,R)\in\Orb^+(D_0,N,R_0)$,
and so Theorem~\ref{thm.f<0} implies that a (real) $\TFF(D,N,R)$ can indeed be obtained in the Naimark-spatial way from our real $\TFF(D_0,N,R_0)$.

Next, we show that a real $\TFF(D,N,R)$ exists if $f(D,N,R)=0$.
By Theorem~\ref{thm.f=0} we necessarily have that $N=4$ and $D=2R$.
Here let $\set{\bfdelta_d}_{d=1}^{D}$ be the standard basis for $\calH=\bbR^{2R}$,
and define $\calU_1=\calU_2:=\Span\set{\bfdelta_d}_{d=1,\, d\text{ odd}}^{D}$ and
$\calU_3=\calU_4:=\Span\set{\bfdelta_d}_{d=1,\, d\text{ even}}^{D}$.
A direct calculation then gives that the four corresponding rank-$R$ projections satisfy $\bfP_1+\bfP_2+\bfP_3+\bfP_4
=\bfI+\bfI=2\bfI$,
implying $\set{\calU_n}_{n=1}^4$ is a $\TFF(2R,4,R)$ for $\bbR^{2R}$.

To conclude, we show that a real $\TFF(D,N,R)$ exists if $f(D,N,R)>0$.
In this case, Theorem~\ref{thm.f>0} gives that $\Orb(D,N,R)$ contains a unique point $(D_0,N,R_0)$ that satisfies~\eqref{eq.minimal}.
We now recall Theorem~13 of~\cite{CasazzaFMWZ11},
which for any positive integers $C$ and $M$ such that $2C\leq M$,
gives a unit norm tight frame $\set{\bfpsi_m}_{m=1}^{M}$ for $\bbR^C$ with the property that $\ip{\bfpsi_{m_1}}{\bfpsi_{m_2}}=0$ whenever $m_1-m_2\geq\lfloor\frac MC\rfloor+3$.
(Though that theorem states that this frame is complex,
the remark that immediately follows it in~\cite{CasazzaFMWZ11} notes that its spectral tetris-based proof actually produces a real frame.)
Here, \eqref{eq.minimal} implies that $C:=D_0$ and $M:=NR_0$ are positive integers such that $2C=2D_0\leq NR_0=M$, and so that theorem gives a unit norm tight frame $\set{\bfpsi_m}_{m=1}^{NR_0}$ for $\bbR^{D_0}$ with the property that $\ip{\bfpsi_{m_1}}{\bfpsi_{m_2}}=0$ whenever $m_1-m_2\geq\lfloor\frac{NR_0}{D_0}\rfloor+3$.
Re-index these frame vectors as
\begin{equation}
\label{eq.UNTF}
\set{\bfphi_{n,r}}_{n=1}^N\,_{r=1}^{R_0},
\quad
\bfphi_{n,r}:=\bfpsi_{N(r-1)+n},
\ \forall n=1,\dotsc,N,\ r=1,\dotsc,R_0.
\end{equation}
We now claim that $\set{\bfphi_{n,r}}_{r=1}^{R_0}$ is orthonormal for any $n$.
Here,
note that since $(D_0,N,R_0)$ satisfies~\eqref{eq.minimal},
\smash{$\frac{R_0}{D_0}\leq\frac12$} and \smash{$\frac 2N\leq\tfrac 12$}.
Moreover, at least one of these two inequalities must be strict since otherwise $f(D,N,R)=f(D_0,N,R_0)=f(2R_0,4,R_0)=0$, a contradiction.
Thus,
\begin{equation*}
\tfrac 2N+\tfrac{R_0}{D_0}<1,
\quad
\text{implying}
\quad
N-2>\tfrac{NR_0}{D_0}\geq\lfloor\tfrac{NR_0}{D_0}\rfloor
\quad
\text{and so}
\quad
N\geq\lfloor\tfrac{NR_0}{D_0}\rfloor+3.
\end{equation*}
As such, $\set{\bfphi_{n,r}}_{r=1}^{R_0}$ is indeed orthonormal:
for any $r_1>r_2$,
\begin{equation*}
[N(r_1-1)+n]-[N(r_2-1)+n]
=N(r_1-r_2)
\geq N\geq \lfloor\tfrac{NR_0}{D_0}\rfloor+3
\end{equation*}
and so
$\ip{\bfphi_{n,r_1}}{\bfphi_{n,r_2}}
=\ip{\bfpsi_{N(r_1-1)+n}}{\bfpsi_{N(r_2-1)+n}}
=0$.
Since~\eqref{eq.UNTF} is a tight frame for $\bbR^{D_0}$,
this implies that $\set{\calU_n}_{n=1}^{N}$,
$\calU_n:=\Span\set{\bfphi_{n,r}}_{r=1}^{R_0}$ is a $\TFF(D_0,N,R_0)$ for $\bbR^{D_0}$.
By Theorem~\ref{thm.f>0},
taking iterated alternating Naimark complements of it yields a real $\TFF(D,N,R)$.
\end{proof}

\subsection{Known equi-isoclinic tight fusion frames}

By Theorem~3.6, if an $\EITFF(D,N,R)$ exists and $f(D,N,R)\geq0$ then either it or its Naimark complement has parameters $(D_0,N,R_0)$ that are minimal, that is, satisfy~\eqref{eq.minimal}.
This same result gives that any $\EITFF(D,N,R)$ with $f(D,N,R)<0$ is either trivial, having $R\in\set{\frac DN,D}$, or has $D=(N-1)R$ and so is the Naimark complement of a trivial $\EITFF(R,N,R)$.

As noted in Section~2,
any $\ECTFF(D,N,1)$ is an $\EITFF(D,N,1)$ and corresponds to an $\ETF(D,N)$.
For any $\ETF(D,N)$ that is not an ONB ($D=N$), a sequence of scalars ($D=1$), or the Naimark complement thereof ($D=N-1$, a regular simplex),
the parameters of either it or its Naimark complement satisfy~\eqref{eq.minimal} in the special case where $R_0=1$, namely that $2\leq D_0\leq\frac N2$.
Though such $\ETF(D_0,N)$ seem rare and remain poorly understood, especially in the complex setting,
a number of diverse ways for constructing infinite families of them have now been discovered.
We do not discuss these in detail, but rather refer the reader to~\cite{FickusM16} for a survey and to~\cite{FickusJ19,FickusM21} for more recent developments.

There is a simple technique for converting these (or more generally any) EITFFs into other EITFFs for spaces of larger dimension:
if \smash{$\set{\calU_n^{(i)}}_{n=1}^N$} is a (real and/or equichordal and/or equi-isoclinic) $\TFF(D_i,N,R_i)$ for $\calH_i$ for both $i=1$ and $i=2$ and $\frac{R_1}{D_1}=\frac{R_2}{D_2}$ then \smash{$\set{\calU_n^{(1)}\times\calU_n^{(2)}}_{n=1}^{N}$} is a (real and/or equichordal and/or equi-isoclinic) $\TFF(D_1+D_2,N,R_1+R_2)$ for $\calH_1\oplus\calH_2$.
Perhaps the easiest way to see this is to let $\calH_i=\bbF^{D_i}$,
let \smash{$\bfPhi_n^{(i)}$} be the $D_i\times R_i$ synthesis operator of an ONB for \smash{$\calU_n^{(i)}$},
and let
$\bfPsi=\left[\begin{array}{ccc}\bfPsi_1&\dotsb&\bfPsi_N\end{array}\right]$ where each $\bfPsi_n$ is the $(D_1+D_2)\times(R_1+R_2)$ matrix defined as
\begin{equation*}
\bfPsi_n=\left[\begin{array}{cc}
\bfPhi_n^{(1)}&\bfzero\\
\bfzero&\bfPhi_n^{(2)}
\end{array}\right],\
\text{ and so }\
\bfPsi\bfPsi^*=\left[\begin{array}{cc}
\frac{NR_1}{D_1}\bfI&\bfzero\\
\bfzero&\frac{NR_2}{D_2}\bfI
\end{array}\right],\
\bfPsi_{n_1}^*\bfPsi_{n_2}^{}
=\left[\begin{array}{cc}
\bigbracket{\bfPhi_{n_1}^{(1)}}^*\bfPhi_{n_2}^{(1)}&\bfzero\\
\bfzero&\bigbracket{\bfPhi_{n_1}^{(2)}}^*\bfPhi_{n_2}^{(2)}
\end{array}\right].
\end{equation*}
Here, having $\frac{R_1}{D_1}=\frac{R_2}{D_2}$ ensures that
\smash{$\set{\calU_n^{(1)}\times\calU_n^{(2)}}_{n=1}^{N}$} is a TFF,
and moreover, when $\set{\calU_n^{(i)}}_{n=1}^N$ is an $\EITFF(D_i,N,R_i)$ for $i\in\set{1,2}$,
that the singular value \smash{$[\tfrac{NR_i-D_i}{D_i(N-1)}]^{\frac12}$} of \smash{$\bigbracket{\bfPhi_{n_1}^{(i)}}^*\bfPhi_{n_2}^{(i)}$} is independent of $i$.
Iteratively applying this idea, starting with two copies of a (real and/or equichordal and/or equi-isoclinic) $\TFF(D,N,R)$ yields a ``tensor-type" (real and/or equichordal and/or equi-isoclinic) $\TFF(KD,N,KR)$ for any positive integer $K$.
See~\cite{CasazzaFMWZ11,CalderbankTX15}, for example, for recent generalizations and instances of this classical trick~\cite{LemmensS73b}.
Here, since $\frac1{D^2}f(D,N,R)=N(\frac RD)-1-N(\tfrac RD)^2$,
and since having \smash{$\frac{R_1}{D_1}=\frac{R_2}{D_2}$} implies \smash{$\frac{R_1}{D_1}=\frac{R_2}{D_2}=\frac{R_1+R_2}{D_1+D_2}$},
the parameters of such TFFs satisfy
\begin{equation*}
\tfrac{f(D_1,N,R_1)}{D_1^2}
=\tfrac{f(D_2,N,R_2)}{D_2^2}
=\tfrac{f(D_1+D_2,N,R_1+R_2)}{(D_1+D_2)^2}.
\end{equation*}
In particular, this construction preserves the sign of $f$.
It also preserves minimality since \eqref{eq.minimal} equates to having
$\frac2N\leq\frac{R_0}{D_0}\leq\frac12$.

As a variation of this idea,
Hoggar~\cite{Hoggar77} uses the injective ring homomorphism
$a+\rmi b\mapsto[\begin{smallmatrix}a&-b\\b&\phantom{-}a\end{smallmatrix}]$
(from the field $\bbC$ into the ring of $2\times 2$ real matrices)
to convert complex equi-isoclinic subspaces into real ones.
In fact, one can verify that applying this mapping to every entry of the synthesis operator of a complex (equichordal or equi-isoclinic) $\TFF(D,N,R)$ yields a real (equichordal or equi-isoclinic) $\TFF(2D,N,2R)$.
Similarly, applying the injective ring homomorphisms
\begin{equation*}
a+\rmi b+\rmj c+\rmk d
=(a+\rmi b)+\rmj(c-\rmi d)
\mapsto[\begin{smallmatrix}
a+\rmi b&          -c-\rmi d\\
c-\rmi d&\phantom{-}a-\rmi b
\end{smallmatrix}]
\mapsto\left[\begin{smallmatrix}
\phantom{-}a&          -b&          -c&\phantom{-}d\\
\phantom{-}b&\phantom{-}a&          -d&          -c\\
\phantom{-}c&\phantom{-}d&\phantom{-}a&\phantom{-}b\\
          -d&\phantom{-}c&          -b&\phantom{-}a
\end{smallmatrix}\right],
\end{equation*}
(from the quaternions $\bbH$ into the ring of $2\times 2$ complex and $4\times 4$ real matrices, respectively) to every entry of a quaternionic (equichordal or equi-isoclinic) $\TFF(D,N,R)$ yields a(n equichordal or equi-isoclinic) complex $\TFF(2D,N,2R)$ or real $\TFF(4D,N,4R)$, respectively~\cite{Hoggar77,Waldron20}.
Remarkably, using numerical techniques that do not generalize to the real and complex settings, \cite{CohnKM16} gives computer-assisted proofs of the existence of quaternionic $\ETF(D,N)$ with
\begin{equation}
\label{eq.quaternionic ETF parameters}
\begin{gathered}
D=2,\ N\in\set{5,6},
\qquad
D=3,\ N\in[6,13]\cup\set{15},
\qquad
D=4,\ N\in[8,21],\\
D=5,\ N\in[10,27],
\qquad
D=6,\ N\in[12,34].
\end{gathered}
\end{equation}
In~\cite{CohnKM16}, ETFs and Naimark complements are referred to as \textit{tight simplices} and \textit{Gale duals}, respectively.
Here, we have included some $(D,N)$ pairs that correspond to the Naimark complements of others explicitly given in~\cite{CohnKM16}, specifically $(2,5)$ and $(2,6)$,
and have similarly omitted others.
Explicit formulations of quaternionic $\ETF(2,5)$ and $\ETF(2,6)$ are given in~\cite{EtTaoui20},
and the fact that quaternionic ETFs have Naimark complements was independently rediscovered in~\cite{Waldron20}.

We note that a (real and/or equichordal and/or equi-isoclinic) $\TFF(D,N,R)$ is incapable of being produced by any of the aforementioned techniques if $D$ and $R$ are coprime.
We are only aware of two published techniques that are capable of producing $\EITFF(D,N,R)$ with such parameters.
One involves a modification of Hoggar's $\bbC$-to-$\bbR$ trick:
applying the mapping
$a+\rmi b\mapsto[\begin{smallmatrix}a&\phantom{-}b\\b&-a\end{smallmatrix}]$ to the entries of a complex symmetric conference matrix of size $N$ yields something which, when suitably scaled and summed with the identity,
becomes the ``fusion Gram" matrix of a real $\EITFF(N,N,2)$~\cite{EtTaoui18,BlokhuisBE18}.
Such a conference matrix can be obtained from a real symmetric conference matrix of size $N+1$~\cite{BlokhuisBE18},
which itself equates to a real $\ETF(\frac12(N+1),N+1)$.
These exist,
for example, whenever $N=Q$ is a prime power such that $Q\equiv 1\bmod 4$~\cite{FickusM16}.
The other technique obtains a real $\EITFF(D(2D+1),D(2D-1),3)$ from a quaternionic $\ETF(D,2D^2-D)$ by exploiting the theory of projective $2$-designs~\cite{IversonKM21}.
To date, these requisite quaternionic ETFs have only been discovered when $D\in\set{2,3}$~\cite{CohnKM16,EtTaoui20,Waldron20},
yielding a real $\EITFF(10,6,3)$ (whose Naimark-complementary $\EITFF(8,6,3)$ has minimal parameters) and a real $\EITFF(21,15,3)$.
(Clearly $21$ and $3$ are not coprime,
but nevertheless no real $\EITFF(21,15,3)$ could arise from the aforementioned techniques since $3$ is odd and no real $\ETF(7,15)$ exists~\cite{FickusM16}.)
We summarize these previously known facts as follows:

\begin{theorem}
\label{thm.EITFF existence}
Let $(D_0,N,R_0)\in\bbZ^3$ satisfy~\eqref{eq.minimal}.
An $\EITFF(D_0,N,R_0)$ exists if:
\begin{enumerate}
\renewcommand{\labelenumi}{(\roman{enumi})}
\item
$R_0=1$ and an $\ETF(D_0,N)$ exists~\cite{FickusM16}.
If this ETF is real then so is this EITFF.\smallskip

\item
$(D_0,N,R_0)=(D_1+D_2,N,R_1+R_2)$ when an $\EITFF(D_1,N,R_1)$ and an $\EITFF(D_2,N,R_2)$  with $\frac{R_1}{D_1}=\frac{R_2}{D_2}$ exist.
In particular,
if an $\EITFF(D,N,R)$ exists then an $\EITFF(KD,N,KR)$ exists for any integer $K>0$.
The resulting EITFFs are real if the requisite ones are as well.\smallskip

\item
\cite{Hoggar77,Waldron20}
a complex $\EITFF(\frac{D_0}2,N,\frac{R_0}2)$,
quaternionic $\EITFF(\frac{D_0}4,N,\frac{R_0}4)$,
or a quaternionic $\EITFF(\frac{D_0}2,N,\frac{R_0}2)$ exists.
In the first two cases, the resulting EITFFs are real.\smallskip

For context, see~\eqref{eq.quaternionic ETF parameters} for the parameters of some quaternionic $\EITFF(D,N,1)$ from~\cite{CohnKM16}.\smallskip

\item
\cite{EtTaoui18,BlokhuisBE18}
$(D_0,N,R_0)=(N,N,2)$ and there exists a complex symmetric conference matrix of size $N$,
such as when there exists a real symmetric conference matrix of size $N+1$,
such as when $N\equiv 1\bmod 4$ is a prime power.\smallskip

\item
\cite{IversonKM21}
$(D_0,N,R_0)\in\set{(8,6,3),(21,15,3)}$.
These EITFFs are real.
\end{enumerate}
\end{theorem}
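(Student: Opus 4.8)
The plan is to treat this theorem as a synthesis of facts established, or cited, in the discussion immediately preceding its statement, verifying each of the five sufficient conditions in turn. In every case the task is the same: recall a construction whose output is a tight fusion frame with the stated parameters, check that it is in fact equi-isoclinic, and confirm the claimed field. The hypothesis that $(D_0,N,R_0)$ satisfies~\eqref{eq.minimal} plays no role in the constructions themselves---it merely reflects the cataloging purpose of the theorem---so no extra consistency check is needed there, though in case (i) it forces $2\leq D_0\leq\tfrac N2$.

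For (i), I would recall from Section~2 that every $\ECTFF(D_0,N,1)$ is automatically an $\EITFF(D_0,N,1)$ and is equivalent to an $\ETF(D_0,N)$, with realness preserved. For (ii), given $\EITFF(D_i,N,R_i)$ for $i\in\set{1,2}$ with $\tfrac{R_1}{D_1}=\tfrac{R_2}{D_2}$, I would take $\calH_i=\bbF^{D_i}$, let $\bfPhi_n^{(i)}$ be the synthesis operator of an ONB for $\calU_n^{(i)}$, and form the block-diagonal matrices $\bfPsi_n$ displayed before the theorem; the identity $\tfrac{R_1}{D_1}=\tfrac{R_2}{D_2}=\tfrac{R_1+R_2}{D_1+D_2}$ yields tightness, while each off-diagonal cross-Gram block $\bigbracket{\bfPhi_{n_1}^{(i)}}^*\bfPhi_{n_2}^{(i)}$ equals $\bigbracket{\tfrac{NR_i-D_i}{D_i(N-1)}}^{\frac12}$ times a unitary, and these two scalars coincide precisely because $\tfrac{R_1}{D_1}=\tfrac{R_2}{D_2}$, so $\bfPsi_{n_1}^*\bfPsi_{n_2}$ is a common scalar multiple of a unitary and the resulting TFF is equi-isoclinic. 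The $K$-copy statement then follows by iterating from $K$ copies of a single $\EITFF(D,N,R)$, and realness is immediate throughout.

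For (iii), I would observe that the map $a+\rmi b\mapsto[\begin{smallmatrix}a&-b\\b&\phantom{-}a\end{smallmatrix}]$ from $\bbC$ and the two displayed maps from $\bbH$ are injective, $*$-preserving ring homomorphisms that send $1$ to an identity matrix; applying one of them entrywise to the synthesis operator of an equi-isoclinic $\TFF(D,N,R)$ therefore preserves all the defining relations ($\bfPhi\bfPhi^*$ a multiple of $\bfI$, the diagonal blocks of $\bfPhi^*\bfPhi$ identities, the off-diagonal blocks common scalar multiples of unitaries) while scaling $D$ and $R$ by $2$, $2$, or $4$, respectively, and the $\bbC$ and $\bbH\to\bbR$ maps land in a real space; here I would cite~\cite{Hoggar77,Waldron20}. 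For (iv), I would apply $a+\rmi b\mapsto[\begin{smallmatrix}a&\phantom{-}b\\b&-a\end{smallmatrix}]$ entrywise to a complex symmetric conference matrix of size $N$ and recall from~\cite{EtTaoui18,BlokhuisBE18} that, suitably scaled and added to the identity, it becomes the ``fusion Gram" matrix of a real $\EITFF(N,N,2)$; the chain of implications (a real symmetric conference matrix of size $N+1$ suffices, as does $N$ being a prime power with $N\equiv1\bmod4$) then follows from~\cite{BlokhuisBE18,FickusM16}. For (v), I would note that the real $\EITFF(8,6,3)$ is the Naimark complement of the real $\EITFF(10,6,3)$ produced from a quaternionic $\ETF(2,6)$ by the projective $2$-design construction of~\cite{IversonKM21}, and that the real $\EITFF(21,15,3)$ arises from a quaternionic $\ETF(3,15)$ in the same way, the requisite quaternionic ETFs being supplied by~\cite{CohnKM16,EtTaoui20,Waldron20}.

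The main ``obstacle" here is not conceptual but a matter of careful bookkeeping: in each item one must confirm that the output is genuinely equi-isoclinic rather than merely equichordal, and that the field is exactly as claimed. The two places where this is least routine are (iii), where it rests on the ring maps being $*$-homomorphisms that respect unitarity, and (iv), where it rests on the ``fusion Gram"-matrix viewpoint for the conference-matrix construction; the projective $2$-design input of~\cite{IversonKM21} in (v) and the computer-assisted existence of the quaternionic ETFs~\eqref{eq.quaternionic ETF parameters} from~\cite{CohnKM16} enter only as black boxes.
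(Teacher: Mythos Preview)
Your proposal is correct and takes essentially the same approach as the paper: the theorem is stated as a summary of the constructions discussed in the text immediately preceding it, with no separate proof environment, and your item-by-item synthesis of (i)--(v) tracks that discussion exactly.
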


\subsection{Known equichordal tight fusion frames}

By Theorem~\ref{thm.summary} every $\TFF(D,N,R)$ with $f(D,N,R)<0$ is necessarily equichordal,
and moreover if such a TFF exists then a real $\ECTFF(D,N,R)$ exists.
This same result implies that every (real) $\ECTFF(D,N,R)$ with $f(D,N,R)\geq0$  is obtained via iterated alternating Naimark and spatial complements of some (real) $\ECTFF(D_0,N,R_0)$ whose parameters satisfy~\eqref{eq.minimal}.
As such, we now compile a list of known ECTFFs whose parameters are minimal in this sense.
Some such ECTFFs are equi-isoclinic; these were already considered in Theorem~\ref{thm.EITFF existence}.
Moreover, some of the same techniques that preserve tightness, equi-isoclinicity and minimality also preserve equichordality: if a (real) $\ECTFF(D_1,N,R_1)$ and $\ECTFF(D_2,N,R_2)$ with $\frac{R_1}{D_1}=\frac{R_2}{D_2}$ exist then so does a (real) $\ECTFF(D_1+D_2,N,R_1+R_2)$;
if a complex or quaternionic $\ECTFF(D,N,R)$ exists then so does a real or complex $\ECTFF(2D,N,2R)$, respectively~\cite{Waldron20}.

One of the earliest known constructions of ECTFFs is also one of the most impressive, as it achieves equality in Gerzon's bound~\eqref{eq.Gerzon's bound}: \cite{CalderbankHRSS99} gives a real $\ECTFF(P,\frac{P(P+1)}{2},\frac{P-1}{2})$ for any prime $P$ for which there exists a Hadamard matrix of size $\frac{P+1}{2}$.
Its parameters are automatically minimal.

Zauner~\cite{Zauner99} constructs real ECTFFs from \textit{balanced incomplete block designs} (BIBDs).
For integers $V>K\geq 2$ and $\Lambda>0$,
a corresponding $\BIBD(V,K,\Lambda)$ is a $V$-element vertex set $\calV$
along with a multiset $\calB$ of $K$-element subsets of $\calV$ (called \textit{blocks}) with the property that any distinct $v_1,v_2\in\calV$ are contained in exactly $\Lambda$ blocks.
Here, counting arguments reveal that every vertex is contained in exactly \smash{$R=\tfrac{\Lambda(V-1)}{K-1}$} blocks,
and that there are exactly \smash{$B=\tfrac{VR}{K}=\tfrac{\Lambda V(V-1)}{K(K-1)}$} blocks total.
A BIBD's $\calB\times\calV$ incidence matrix $\bfX$ satisfies $\bfX^\rmT\bfX=(R-\Lambda)\bfI+\Lambda\bfJ$ where $\bfJ$ is an all-ones matrix.
The fact that $V=\rank(\bfX^\rmT\bfX)=\rank(\bfX)\leq B$ is called \textit{Fisher's inequality}.
For any $\BIBD(V,K,\Lambda)$, Zauner~\cite{Zauner99} notes that $\set{\calU_v}_{v\in\calV}$, $\calU_v:=\Span\set{\bfdelta_b}_{b\in\set{b'\in\calB: v\in v'}}$ defines a real $\ECTFF(B,V,R)$.
These ECTFFs are never equi-isoclinic, as the principal angles between any two subspaces are $0$ and $\frac\pi2$ with multiplicities $\Lambda>0$ and $R-\Lambda>0$, respectively.
The parameters of any such ECTFF automatically satisfy a part of~\eqref{eq.minimal}: since $K\geq 2$,
\begin{equation*}
2D=2B\leq BK=VR=NR,
\quad
\text{i.e.,}
\quad
D\leq NR-D.
\end{equation*}
Meanwhile, they satisfy $R\leq D-R$ if and only if $2R\leq B$,
or equivalently, $2K\leq V$.
Since the complements of the blocks of any BIBD form another BIBD,
we can assume this to be the case without loss of generality.
(In fact, complementary BIBDs yield spatial-complementary ECTFFs via Zauner's construction.)
That is, if an $\ECTFF(D,N,R)$ arises from Zauner's construction then either $(D,N,R)$ or $\sigma(D,N,R)=(D,N,D-R)$ is minimal,
and so $f(D,N,R)\geq0$.
The BIBD existence literature is vast; see~\cite{MathonR07,AbelG07} for overviews containing many examples.

Next, as detailed in the previous section,
King~\cite{King16} constructs an $\ECTFF(D,H,\frac GH)$ from any semiregular
$\DDS(\frac GH,H,D,\Lambda_1,\Lambda_2)$.
(Recall that the final two parameters of a semiregular DDS are superfluous,
being given by~\smash{$\Lambda_1=\tfrac{D(DH-G)}{G(H-1)}$} and \smash{$\Lambda_2=\frac{D^2}{G}$}.)
For example, applying this to known infinite families of semiregular DDSs discussed in Theorem~2.3.6, Corollary~2.3.8 and Result~2.3.9 of~\cite{Pott95} yields $\ECTFF(D,N,R)$ of the following three types, respectively, for any prime power $Q$ and integers $1\leq I\leq J$:
\begin{enumerate}
\renewcommand{\labelenumi}{(\roman{enumi})}
\item
\smash{$(D,N,R)
=(Q^{I-1}\tfrac{Q^{2J-2I}(Q^I-1)}{Q-1},Q^I,\tfrac{Q^{2J-2I}(Q^I-1)}{Q-1})$},\smallskip
\item
\smash{$(D,N,R)=(LQ^{J-1}\tfrac{Q^J-1}{Q-1},Q^J,\tfrac{Q^J-1}{Q-1})$} provided an $L$-element difference set for $\bbF_Q$ exists,\smallskip
\item
\smash{$(D,N,R)=(2(3^{2J}-3^J),3^{2J},4)$}.
\end{enumerate}
One may also obtain a semiregular DDS by summing a semiregular RDS for $\calG$ with a difference set for $\calH$~\cite{Ionin00}.
We do not report them separately here since the resulting ECTFFs have ``ETF-tensor-ONB" parameters.
As reported in the second version of~\cite{King16},
they are in fact EITFFs that arise by taking the tensor products of the vectors of a harmonic ETF with those of some harmonic mutually unbiased bases that arise in the manner of~\cite{GodsilR09}.

As noted in Theorem~\ref{thm.ECTFF from DF},
every $\ECTFF(D,N,R)$ that arises from a semiregular DDS is an example of one that more generally arises from a DF for $\calH$,
and so has $(D,N,R)=(D,H,\frac GH)$.
By Corollary~\ref{cor.ECTFFs from known DFs},
we moreover have $(D,N,R)=(KR,V,R)$ where $K$ and \smash{$R=\frac{\Lambda(V-1)}{K(K-1)}$} are the cardinality and numbers of subsets that comprise that DF, respectively,
and $V=H$ is the order of $\calH$.
Provided we exclude trivial DFs that consist of singleton sets, these parameters $(D,N,R)$ automatically satisfy a part of~\eqref{eq.minimal}: since $K\geq 2$,
we have $2R\leq KR=D$ and so $R\leq D-R$.
They thus satisfy~\eqref{eq.minimal} in total if and only if $2D=2KR$ is at most $NR=HR$,
namely if and only if $2K\leq H$.
Since the complements of the sets of a DF themselves form a DF,
we can assume this to be the case without loss of generality.
(In fact, complementary DFs yield Naimark complementary ECTFFs via Corollary~\ref{cor.ECTFFs from known DFs}.)

Next, whenever a (real) $\ETF(D,M)$ can be partitioned into regular simplices for their span, then their spans form a (real) $\ECTFF(D,N,R)$ where \smash{$R=[\frac{D(M-1)}{M-D}]^{\frac12}$} and $N=\frac{M}{R+1}$~\cite{FickusJKM18}.
Some of the resulting parameters are explainable by other methods.
For example,
applying this fact to either a Steiner or McFarland harmonic ETF simply recovers an ECTFF that arises from $\BIBD(V,K,1)$ via Zauner's method~\cite{Zauner99}.
Applying it to the Naimark complements of harmonic ETFs arising from certain Singer and twin-prime-power difference sets yields EITFFs with ``ETF-tensor-ONB" parameters~\cite{FickusS20}.
It also yields a non-equi-isoclinic $\ECTFF(\frac12(Q+1)^2,Q,Q+1)$ whenever $Q$ and $Q+2$ are prime powers with $Q\equiv 1\bmod 4$~\cite{FickusS20}.
However, an ECTFF with these parameters arises more generally for any odd prime power $Q$ by applying Corollary~\ref{cor.ECTFFs from known DFs} to a $\DF(Q,\frac{Q+1}{2},\frac{(Q+1)^2}{2})$ of~\cite{Wilson72}.
That said, some of the ECTFFs of~\cite{FickusJKM18} remain novel even in light of recent discoveries:
it gives an $\ECTFF(Q^2-Q+1,Q^2-Q+1,Q)$ for any prime power $Q$ that is moreover real when $Q$ is odd.
For context, Zauner's method produces a real $\ECTFF(Q^2-Q+1,Q^2-Q+1,Q)$ whenever $Q-1$ is a prime power, or perhaps more generally, whenever there exists a projective plane of order $Q-1$.
For the $\ECTFF(D,N,R)$ of~\cite{FickusJKM18}, it seems nontrivial to determine the minimal point $(D_0,N,R_0)$ of $\Orb(D,N,R)$ in general, as the method of obtaining it from $(D,N,R)$ seems to depend on the underlying $\ETF(D,M)$.
As such, we just compute it in the aforementioned special case:
$(Q^2-Q+1,Q^2-Q+1,Q)$ is already minimal provided $Q\geq 3$,
and when $Q=2$, $\Orb(3,3,2)$ contains no minimal point.

Continuing, the theory of \textit{paired difference sets} constructed from quadratic forms over $\bbF_2$ yields a real
$\ECTFF(2^{J-1}(2^J+\varepsilon),2^{2J},\frac13(2^{2J}-1))$  for any integer $J\geq 2$~\cite{FickusIJK20} and $\varepsilon\in\set{1,-1}$.
Taking its spatial complement yields one with minimal parameters
$(2^{J-1}(2^J+\varepsilon),2^{2J},\frac13[2^{J-1}(2^J+3\varepsilon)+1])$.
One of these is an ETF, namely when $J=2$ and $\varepsilon=-1$.

A number of explicit constructions of individual real $\ECTFF(D,N,R)$ are given in~\cite{ConwayHS96,CohnKM16}.
Some of these are subsumed by other constructions:
applying Hoggar's trick to known complex $\ETF(2,4)$, $\ETF(3,9)$ and $\ETF(4,8)$ yields real $\EITFF(4,4,2)$, $\EITFF(6,9,2)$ and $\EITFF(8,8,2)$, cf.~\cite{ConwayHS96};
the $\ECTFF(7,28,3)$ of~\cite{ConwayHS96} generalizes to the aforementioned construction of~\cite{CalderbankHRSS99};
the $\ECTFF(D,N,R)$ of~\cite{CohnKM16} with $(D,N,R)$ being $(5,4,2)$, $(7,4,3)$ or $(8,4,3)$ have $f(D,N,R)<0$ and so arise in the Naimark-spatial way from trivial TFFs;
applying Zauner's method to a $\BIBD(4,2,1)$ (an affine plane of order $2$) yields an $\ECTFF(6,4,3)$, cf.~\cite{CohnKM16}.
Others seem novel even in hindsight,
such as the real $\ECTFF(D,N,R)$ of~\cite{ConwayHS96} with $(D,N,R)$ being $(4,5,2)$, $(4,6,2)$, $(4,10,2)$ or $(8,28,2)$,
and those of~\cite{CohnKM16} with $(D,N,R)$ being $(4,7,2)$ or $(4,8,2)$.
Remarkably~\cite{CohnKM16} also gives computer-assisted proofs, rooted in numerical methods, of the existence of numerous real $\ECTFF(D,N,R)$ with small parameters, including whenever
\begin{align}
\nonumber
&D = 4,\ R = 2,\ N\in[4,6],
&&D = 5,\ R = 2,\ N\in[5,11],
&&D = 6,\ R = 2,\ N\in[6,14],\\
\label{eq.numerical ECTFF}
&D = 6,\ R = 3,\ N\in[5,16],
&&D = 7,\ R = 2,\ N\in[7,17],
&&D = 7,\ R = 3,\ N\in[5,22],\\
\nonumber
&D = 8,\ R = 2,\ N\in[8,21],
&&D = 8,\ R = 3,\ N\in[6,28],
&&D = 8,\ R = 4,\ N\in[5,30].
\end{align}
By our reckoning, around two-thirds of these real ECTFFs are not currently explainable by other known methods.
It is also notable that~(6.3) of~\cite{CohnKM16} only differs from~\eqref{eq.invariant} by a function of $N$.

We summarize these known constructions of ECTFFs with minimal parameters as follows:

\begin{theorem}
\label{thm.ECTFF existence}
Let $(D_0,N,R_0)\in\bbZ^3$ satisfy~\eqref{eq.minimal}.
An $\ECTFF(D_0,N,R_0)$ exists if:
\begin{enumerate}
\renewcommand{\labelenumi}{(\roman{enumi})}
\item
an $\EITFF(D_0,N,R_0)$ exists: see Theorem~\ref{thm.EITFF existence}.
If this EITFF is real then so is this ECTFF.\smallskip

\item
$(D_0,N,R_0)=(D_1+D_2,N,R_1+R_2)$ when an $\ECTFF(D_1,N,R_1)$ and an $\ECTFF(D_2,N,R_2)$  with $\frac{R_1}{D_1}=\frac{R_2}{D_2}$ exist.
In particular,
if an $\ECTFF(D,N,R)$ exists then an $\ECTFF(KD,N,KR)$ exists for any integer $K>0$.
The resulting ECTFFs are real if the requisite ones are as well.\smallskip

\item
\cite{Hoggar77,Waldron20}
a complex $\ECTFF(\frac{D_0}2,N,\frac{R_0}2)$,
quaternionic $\ECTFF(\frac{D_0}4,N,\frac{R_0}4)$,
or a quaternionic $\ECTFF(\frac{D_0}2,N,\frac{R_0}2)$ exists.
In the first two cases, the resulting ECTFFs are real.\smallskip

\item
\cite{CalderbankHRSS99} $(D_0,N,R_0)=(P,\frac{P(P+1)}{2},\frac{P-1}{2})$ where $P$ is prime and a Hadamard matrix of size $\frac{P+1}{2}$ exists.
These ECTFFs are real.\smallskip

\item
\cite{Zauner99}
$(D_0,N,R_0)=(B,V,R)$ for some $\BIBD(V,K,\Lambda)$ where
$B=\tfrac{\Lambda V(V-1)}{K(K-1)}$ and $R=\tfrac{\Lambda(V-1)}{K-1}$.
These ECTFFs are real.\smallskip

\item
(Corollary~\ref{cor.ECTFFs from known DFs})
$(D_0,N,R_0)=(\tfrac{\Lambda(V-1)}{K-1}, V, \tfrac{\Lambda(V-1)}{K(K-1)})$ and a $\DF(V,K,\Lambda)$ exists.

As a subcase, any semiregular
$\DDS(\frac GH,H,D,\Lambda_1,\Lambda_2)$ yields an $\ECTFF(D,H,\frac{G}{H})$~\cite{King16}.\smallskip

\item
\cite{FickusJKM18}
$(D_0,N,R_0)=(Q^2-Q+1,Q^2-Q+1,Q)$ provided $Q\geq 3$ is a prime power.
These ECTFFs are real whenever $Q$ is odd.\smallskip

\item
\cite{FickusIJK20}
$(D_0,N,R_0)=(2^{J-1}(2^J+\varepsilon),2^{2J},\frac13[2^{J-1}(2^J+3\varepsilon)+1])$ for some integer $J\geq 2$ and $\varepsilon\in\set{1,-1}$.
These ECTFFs are real.\smallskip

\item
\cite{ConwayHS96,CohnKM16}
$(D_0,N,R_0)$ is $(4,5,2)$, $(4,6,2)$, $(4,10,2)$, $(8,28,2)$, $(4,7,2)$, $(4,8,2)$ or, more generally, when $(D_0,N,R_0)=(D,N,R)$ is given in~\eqref{eq.numerical ECTFF}.
These ECTFFs are real.
\end{enumerate}
\end{theorem}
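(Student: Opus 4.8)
The plan is to treat each of the nine sufficient conditions separately, invoking in each case the construction it names (or one developed earlier in this paper) to exhibit an $\ECTFF$ with the stated parameters and then matching those parameters against $(D_0,N,R_0)$. Item~(i) is immediate: Section~2 shows that every EITFF is an ECTFF, and a real EITFF is in particular a real ECTFF, so the claim follows from Theorem~\ref{thm.EITFF existence}. For item~(ii) I would recall the direct-sum construction spelled out just before Theorem~\ref{thm.EITFF existence}: the block-diagonal synthesis operators $\bfPsi_n$ built from $\bfPhi_n^{(1)}$ and $\bfPhi_n^{(2)}$ produce a $\TFF(D_1+D_2,N,R_1+R_2)$ when $\tfrac{R_1}{D_1}=\tfrac{R_2}{D_2}$, and its off-diagonal cross-Gram matrices are block-diagonal with blocks $[\bfPhi_{n_1}^{(i)}]^*\bfPhi_{n_2}^{(i)}$, so their squared Frobenius norms add. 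Since each summand equals the constant $\tfrac{R_i(NR_i-D_i)}{D_i(N-1)}$ from~\eqref{eq.chordal Welch} and, writing $R_i=\rho D_i$ for the common ratio $\rho$, this constant is $\tfrac{\rho D_i(N\rho-1)}{N-1}$, the total is $\tfrac{\rho(D_1+D_2)(N\rho-1)}{N-1}$, which is exactly the equichordality constant of $(D_1+D_2,N,R_1+R_2)$; hence the direct sum is equichordal (and real when both summands are). The in-particular statement is the case of $K$ identical summands. Item~(iii) then follows by quoting Hoggar's $\bbC$-to-$\bbR$, $\bbH$-to-$\bbC$, and $\bbH$-to-$\bbR$ ring-homomorphism tricks \cite{Hoggar77,Waldron20}, which convert an equichordal $\TFF(D,N,R)$ over $\bbC$ or $\bbH$ into an equichordal $\TFF(2D,N,2R)$ or $\TFF(4D,N,4R)$ of the appropriate field.

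For items~(iv)--(ix) the proof is a matter of citing the relevant explicit construction and reading off its parameters. Item~(iv) is the real $\ECTFF(P,\tfrac{P(P+1)}{2},\tfrac{P-1}{2})$ of \cite{CalderbankHRSS99}; item~(v) is Zauner's construction \cite{Zauner99}, where the incidence-based subspaces of a $\BIBD(V,K,\Lambda)$ form a real $\ECTFF(B,V,R)$ with $B=\tfrac{\Lambda V(V-1)}{K(K-1)}$ and $R=\tfrac{\Lambda(V-1)}{K-1}$; item~(vi) is Corollary~\ref{cor.ECTFFs from known DFs} together with its subcase \cite{King16}; item~(vii) is the $\ECTFF(Q^2-Q+1,Q^2-Q+1,Q)$ of \cite{FickusJKM18}, real for odd $Q$, which as observed in the discussion preceding this theorem already satisfies~\eqref{eq.minimal} once $Q\geq 3$; and item~(ix) is the list of real ECTFFs constructed explicitly or certified numerically in \cite{ConwayHS96,CohnKM16}. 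For item~(viii) I would take the real $\ECTFF(2^{J-1}(2^J+\varepsilon),2^{2J},\tfrac13(2^{2J}-1))$ of \cite{FickusIJK20} and pass to its spatial complement, which by Section~2 is a real $\ECTFF$ with the same $D$ and $N$ and third parameter $D-\tfrac13(2^{2J}-1)=\tfrac13[2^{J-1}(2^J+3\varepsilon)+1]$, the arithmetic having been recorded in the paragraph before the theorem.

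There is no genuinely hard step here --- the theorem is a curated synthesis of facts proved in this paper or in the cited literature --- but two points deserve care in the write-up. The first is the verification in item~(ii) that the direct sum remains equichordal, which is the one place a short computation is actually needed. The second is keeping the orientation conventions consistent: in items~(v) and~(vi) one must be free to replace the underlying $\BIBD$ or $\DF$ by its blockwise or setwise complement (which, as noted in the surrounding text, turns the resulting ECTFF into its spatial complement) so that the construction outputs precisely the minimal triple $(D_0,N,R_0)$ rather than a Naimark- or spatial-complementary cousin, and in items~(vii) and~(viii) the explicit parameter checks indicated above play the same role. Assembling these observations in the order listed completes the proof.
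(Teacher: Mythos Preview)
Your proposal is correct and matches the paper's approach: the theorem is stated without a separate proof environment, being a summary of the discussion immediately preceding it in Section~5.3, and you have faithfully reconstructed that discussion item by item. Your explicit Frobenius-norm computation for item~(ii) is slightly more detailed than what the paper provides (which simply asserts that the direct-sum construction preserves equichordality), but this is a welcome elaboration rather than a different argument.
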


We now use this summary to certify the novelty of some of the ECTFFs that arise from DFs via Corollary~\ref{cor.ECTFFs from known DFs}.
(We caution that if $\set{\calD_r}_{r\in\calR}$ is any $\DF(V,K,\Lambda)$ for $\calV$ then $\set{v+\calD_r}_{v\in\calV,\,r\in\calR}$ is the block set of a $\BIBD(V,K,\Lambda)$.
Via~\cite{Zauner99}, it thus gives rise to a real $\ECTFF(D',N',R')$ whose parameters \smash{$D'=\frac{\Lambda V(V-1)}{K(K-1)}$}, $N'=V$ and $R'=\frac{\Lambda(V-1)}{K-1}$ differ from those of the (complex) ECTFF produced in Corollary~\ref{cor.ECTFFs from known DFs} from the same DF.)
This process is best explained by example:
since $Q=19$, $K=3$ and $R=3$ satisfy $\frac{RK(K-1)}{Q-1},\frac{2RK}{Q-1}\in\bbZ$,
Corollary~\ref{cor.ECTFFs from known DFs} yields a complex $\ECTFF(9,19,3)$.
(It arises from the $\DF(19,3,1)$ produced by Theorem~7 of~\cite{Wilson72}.)
Here, note $(D_0,N,R_0)=(9,19,3)$ satisfies~\eqref{eq.minimal}.
(This automatically implies $f(D_0,N,R_0)\geq 0$.
In fact, since $N=19>4$, it automatically implies $f(D_0,N,R_0)>0$.
Here, $f(9,19,3)=261$.)
By the theory of Section~3, specifically Theorem~\ref{thm.f>0},
if such an ECTFF arose in the Naimark-spatial way from any previously known $\ECTFF(D,N,R)$ then $(D_0,N,R_0)=(9,19,3)$ is necessarily the unique minimal point of its orbit $\Orb(D,N,R)$, and so would appear in Theorem~\ref{thm.ECTFF existence} someplace beyond (vi).
As we now detail, this is not the case.

Here, note that if $(9,19,3)=(D_1+D_2,19,R_1+R_2)$ for some positive integers $D_1\leq D_2$, $R_1\leq R_2$ with $\frac{R_1}{D_1}=\frac{R_2}{D_2}$ then necessarily $R_1=1$ and
$\frac{R_1}{D_1}=\frac{R_1+R_2}{D_1+D_2}=\frac{3}{9}=\frac13$, and so $D_1=3$.
However, no $\ECTFF(3,19,1)$ exists since these parameters violate Gerzon's bound~\eqref{eq.Gerzon's bound}.
As such, our $\ECTFF(9,19,3)$ cannot be obtained from ones with smaller parameters via (ii).
Similarly, since $3$ is not even, this ECTFF cannot be obtained via Hoggar's tricks.
(We caution that Corollary~\ref{cor.ECTFFs from known DFs} also yields an $\ECTFF(6,13,2)$ from the $\DF(13,3,1)$ that we used as an example in the previous section, but we cannot certify its novelty since Hoggar's method converts a known quaternionic $\ETF(3,13)$ of~\cite{CohnKM16} into a complex $\EITFF(6,13,2)$.)
Since $(9,19,3)$ is also not consistent with any of the other cases of Theorem~\ref{thm.EITFF existence}, we moreover conclude that no $\EITFF(9,19,3)$ is known.
Altogether, we see no $\ECTFF(9,19,3)$ arises in the manner of (i), (ii) or (iii) of~Theorem~\ref{thm.ECTFF from DF}.
Continuing, no $\ECTFF(D,N,R)$ produced by Corollary~\ref{cor.ECTFFs from known DFs} could also arise via (iv) since $D=KR$ is never prime.
No $\ECTFF(9,13,3)$ arises via (v) since, by Fisher's inequality, no BIBD with $B=9$ and $V=19$ exists.
No $\ECTFF(9,19,3)$ arises from a semiregular $\DDS(3,19,9,\Lambda_1,\Lambda_2)$ since, as detailed in Section~4, such a DDS could only exist if    \smash{$\Lambda_2=\frac{D^2}{G}=\frac{9^2}{3(19)}=\frac{27}{19}$} was an integer.
Moreover, $(D_0,N,R_0)=(9,19,3)$ is clearly not of any form given in (vii), (viii) or (ix).

Thus, the $\ECTFF(9,19,3)$ arising from Corollary~\ref{cor.ECTFFs from known DFs} is certifiably novel.
Whenever $(D_0,N,R_0)$ satisfies~\eqref{eq.minimal} with $N>4$,
and a newly discovered $\ECTFF(D_0,N,R_0)$ is certified to be novel in this manner,
Theorem~\ref{thm.f>0} (along with the fact that two orbits under a group action are either identical or disjoint) implies that a new $\ECTFF(D,N,R)$ exists for every $(D,N,R)$ in the infinite set $\Orb(D_0,N,R_0)$.
That is,
every new such ECTFF actually yields an infinite family of ECTFFs (via iterated alternating Naimark and spatial complements), every member of which is new.
Applying this idea with $(D_0,N,R_0)=(9,19,3)$ for example yields an infinite family of new ECTFFs with the following parameters:
\begin{equation*}
\dotsb
\underset{\rms}{\leftrightarrow}
(105,19,6)
\underset{\rmN}{\leftrightarrow}
(9,19,6)
\underset{\rms}{\leftrightarrow}
\mathbf{(9,19,3)}
\underset{\rmN}{\leftrightarrow}
(48,19,3)
\underset{\rms}{\leftrightarrow}
(48,19,45)
\dotsb.
\end{equation*}

We emphasize that not every ECTFF produced by Corollary~\ref{cor.ECTFFs from known DFs} is novel: as we have already noted, infinite families of semiregular DDSs are known,
meaning the subcase of (vi) is nontrivial.
Moreover, some DFs, such as those consisting of all $I$-dimensional subspaces of a finite vector space $\calV$ of dimension $J$,
yield ECTFFs whose parameters match some of those obtained in~(v).
We leave a deeper investigation of such phenomena for future research.
That said, Corollary~\ref{cor.ECTFFs from known DFs} seems to produce an infinite number of certifiably novel ECTFFs with minimal parameters.
For example, to generalize the aforementioned example with $(D_0,N,R_0)=(9,19,3)$,
let $Q\geq 19$ be any prime power such that $Q\equiv 7\bmod 12$.
(By Dirichlet's theorem on arithmetic progressions, an infinite number of such $Q$ exist.)
We apply Corollary~\ref{cor.ECTFFs from known DFs} with \smash{$K=\frac{Q-1}{6}$} and $R=3$:
since \smash{$\tfrac{RK(K-1)}{Q-1}=\tfrac{Q-7}{12}\in\bbZ$} and $\frac{2RK}{Q-1}=1\in\bbZ$,
it provides an \smash{$\ECTFF(\frac{Q-1}{2},Q,3)$} from a $\DF(Q,\frac{Q-1}{6},\frac{Q-7}{12})$ of~\cite{Wilson72}.
Here, the arguments used above in the $Q=19$ case generalize,
certifying that this ECTFF is novel.
In particular, such an ECTFF cannot arise via (ii) since no $\ETF(D,N)$ with $D=\frac{N-1}{6}>1$ is apparently known~\cite{FickusM16},
and it cannot arise via the subcase of (vi) since the fact that $Q$ and $Q-1$ are coprime implies \smash{$\Lambda_2=\frac{D^2}{G}=\frac{(Q-1)^2}{12Q}\notin\bbZ$}.
From the perspective of constructing certifiably new ECTFFs,
perhaps the greatest weakness of Corollary~\ref{cor.ECTFFs from known DFs} is that every $\ECTFF(D,N,R)$ it produces has $R$ dividing $D$,
which leaves the door open to possible alternative constructions that exploit (ii).
Moving forward, this encourages the search for $\EITFF(D,N,R)$ where $D$ and $R$ are coprime.

We conclude this paper with a result that completely characterizes the existence of (real and/or equichordal and/or equi-isoclinic) $\TFF(2R,4,R)$.
When combined with Theorems~\ref{thm.f=0}, \ref{thm.summary} and~\ref{thm.TFF existence},
this fully settles the existence problem for all (real and/or equichordal and/or equi-isoclinic) $\TFF(D,N,R)$ with $f(D,N,R)\leq 0$,
including all those with $N\in\set{2,3,4}$.
Future work on this problem should thus focus on cases where $f(D,N,R)>0$,
or more precisely by Theorem~\ref{thm.f>0},
$(D_0,N,R_0)$ that satisfy~\eqref{eq.minimal} with $N>4$.

\begin{theorem}
\label{thm.(2R,4,R)}
For any positive integer $R$,
an $\EITFF(2R,4,R)$ and a real $\ECTFF(2R,4,R)$ exist.
Moreover, a real $\EITFF(2R,4,R)$ exists if and only if $R$ is even.
\end{theorem}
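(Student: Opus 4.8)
The plan is to settle the three assertions in turn. For the two positive claims we assemble ingredients already recorded in Theorems~\ref{thm.EITFF existence} and~\ref{thm.ECTFF existence}; for the ``only if'' half of the last claim we reduce, by a direct computation with fusion Gram matrices, to the existence of an orthogonal matrix $\bfA\in O(R)$ with $\bfA^2=-\bfI$, and a short parity argument then forces $R$ to be even.

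\textit{Complex $\EITFF(2R,4,R)$ and real $\ECTFF(2R,4,R)$.} A complex $\ETF(2,4)$ exists~\cite{FickusM16} (a maximal set of equiangular lines in $\bbC^2$), and by the discussion of Section~2 it is a complex $\EITFF(2,4,1)$; since $(2,4,1)$ satisfies~\eqref{eq.minimal}, applying Theorem~\ref{thm.EITFF existence}(ii) with $K=R$ yields a complex $\EITFF(2R,4,R)$ for every positive integer $R$. For the real $\ECTFF(2R,4,R)$, Theorem~\ref{thm.EITFF existence}(iii) turns the complex $\EITFF(2,4,1)$ into a real $\EITFF(4,4,2)$, hence a real $\ECTFF(4,4,2)$, and Theorem~\ref{thm.ECTFF existence}(v), applied to the $\BIBD(4,2,1)$ whose blocks are the edges of $K_4$, gives a real $\ECTFF(6,4,3)$. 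Both have subspace-to-ambient-dimension ratio $\tfrac12$, so Theorem~\ref{thm.ECTFF existence}(ii) produces a real $\ECTFF(4a+6b,4,2a+3b)$ for all integers $a,b\geq0$ not both zero. Since $4a+6b=2(2a+3b)$ and every integer $R\geq2$ is of the form $2a+3b$ with $a,b\geq0$, this gives a real $\ECTFF(2R,4,R)$ for every $R\geq2$.

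\textit{A real $\EITFF(2R,4,R)$ exists if and only if $R$ is even.} Suppose one exists for $\bbR^{2R}$, and let $\bfPhi_1,\dotsc,\bfPhi_4$ be the synthesis operators of orthonormal bases of its subspaces. Since $(D,N,R)=(2R,4,R)$, equality in~\eqref{eq.spectral Welch} forces the common isoclinic angle $\theta$ to satisfy $\cos^2\theta=\tfrac{NR-D}{D(N-1)}=\tfrac13$, so (over $\bbR$) each off-diagonal cross-Gram matrix $\bfPhi_i^*\bfPhi_j$ equals $\tfrac1{\sqrt3}\bfW_{ij}$ for some $\bfW_{ij}\in O(R)$ with $\bfW_{ji}=\bfW_{ij}^\rmT$. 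Replacing the basis of each $\calU_j$ by $\bfPhi_j\mapsto\bfPhi_j\bfW_{1j}^\rmT$ (which leaves every subspace, hence the EITFF, unchanged) we may assume $\bfW_{12}=\bfW_{13}=\bfW_{14}=\bfI$. Tightness, $\sum_i\bfPhi_i^{}\bfPhi_i^*=2\bfI$, is then equivalent to the $4R\times4R$ fusion Gram matrix $\bfG$, with $R\times R$ blocks $\bfG_{ii}=\bfI$ and $\bfG_{ij}=\tfrac1{\sqrt3}\bfW_{ij}$, satisfying $\bfG^2=2\bfG$, because $\Tr\bfG=4R$ automatically. Expanding the off-diagonal blocks of $\bfG^2=2\bfG$ yields $\bfW_{ij}^{}\bfW_{jk}^{}+\bfW_{ij'}^{}\bfW_{j'k}^{}=\bfzero$ for each pair $\set{i,k}$, where $\set{j,j'}=\set{1,2,3,4}\setminus\set{i,k}$; substituting the normalization, three of these six identities read $\bfW_{24}=-\bfW_{23}$, $\bfW_{34}=\bfW_{23}$ and $\bfW_{23}^\rmT=-\bfW_{23}$, and the remaining three then hold automatically. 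Hence a real $\EITFF(2R,4,R)$ exists exactly when there is an orthogonal, skew-symmetric $\bfA:=\bfW_{23}$ on $\bbR^R$, equivalently an $\bfA\in O(R)$ with $\bfA^2=-\bfI$; conversely, given any such $\bfA$, the block matrix $\bfG$ with $\bfG_{12}=\bfG_{13}=\bfG_{14}=\tfrac1{\sqrt3}\bfI$, $\bfG_{23}=\bfG_{34}=\tfrac1{\sqrt3}\bfA$, $\bfG_{24}=-\tfrac1{\sqrt3}\bfA$ and $\bfG_{ii}=\bfI$ satisfies $\bfG^2=2\bfG$ and $\Tr\bfG=4R$, hence is positive semidefinite of rank $2R$, hence is the Gram matrix of $4R$ vectors in $\bbR^{2R}$ that split into four orthonormal $R$-tuples whose spans form a real $\EITFF(2R,4,R)$. (For even $R$ one may instead tensor the real $\EITFF(4,4,2)$ above with itself $R/2$ times, via Theorem~\ref{thm.EITFF existence}(ii).) Finally, an orthogonal $\bfA$ with $\bfA^2=-\bfI$ has minimal polynomial $x^2+1$, which is irreducible over $\bbR$, so $\bbR^R$ is a vector space over $\bbR[x]/(x^2+1)\cong\bbC$ and $R$ must be even; and when $R$ is even, $\bfA=\bigl[\begin{smallmatrix}\bfzero&-\bfI_{R/2}\\\bfI_{R/2}&\bfzero\end{smallmatrix}\bigr]$ does the job.

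The hard part will be this last reduction: arranging the basis normalization, checking that tightness is captured exactly by $\bfG^2=2\bfG$ (not something stronger), and verifying that the six block identities collapse to the single condition $\bfA^2=-\bfI$. Once that is in place, the parity dichotomy together with the forward and backward constructions follow at once.
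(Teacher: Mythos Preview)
Your proof is correct. The existence arguments mirror the paper's exactly: tensor the complex $\ETF(2,4)$ for the complex $\EITFF(2R,4,R)$, and combine the real $\EITFF(4,4,2)$ with the real $\ECTFF(6,4,3)$ from the $\BIBD(4,2,1)$ for the real $\ECTFF(2R,4,R)$.

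For the ``only if'' direction you take a different and somewhat cleaner route than the paper. You work directly with the fusion Gram matrix $\bfG$ and the tightness condition $\bfG^2=2\bfG$, normalize so that $\bfW_{1j}=\bfI$, and extract a single skew-symmetric orthogonal matrix $\bfA=\bfW_{23}$, whence $\bfA^2=-\bfI$ and $R$ is even. The paper instead normalizes the synthesis operator $\bfPhi$ itself, reduces to an orthogonal $\bfU$ such that $\bfU$, $-(\bfI+\bfU)$ and $\tfrac1{\sqrt3}(\bfI+2\bfU)$ are all orthogonal (equivalently $\bfU+\bfU^\rmT=-\bfI$), and then argues via the eigenvalues $\tfrac12(-1\pm\sqrt3\,\rmi)$ and the realness of $\Tr(\bfU)$. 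The two parametrizations are related by $\bfA=\tfrac1{\sqrt3}(\bfI+2\bfU)$; your formulation has the advantage that the condition $\bfA^2=-\bfI$ is transparently equivalent to $R$ being even and makes the converse construction immediate, whereas the paper's eigenvalue computation is a bit more roundabout. One small note: you prove the real-$\ECTFF$ claim only for $R\geq2$, which is in fact all that holds, since a real $\ECTFF(2,4,1)$ would be a real $\ETF(2,4)$ and this violates Gerzon's bound~\eqref{eq.Gerzon's bound}; the paper's proof has the same implicit restriction.
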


\begin{proof}
Hoggar's method converts the well-known complex $\ETF(2,4)$~\cite{FickusM16} into a real $\EITFF(4,4,2)$.
``Tensoring" it yields a real $\EITFF(2R,4,R)$ for any even positive integer $R$.
Meanwhile, ``tensoring" the $\ETF(2,4)$ directly yields a complex $\EITFF(2R,4,R)$ for any positive integer $R$.
Next, applying Zauner's construction to the well-known $\BIBD(4,2,1)$ (the affine plane of order $2$) yields a real $\ECTFF(6,4,3)$.
Combining an arbitrary number of copies of these real $\EITFF(4,4,2)$ and $\ECTFF(6,4,3)$ in the manner of Theorem~\ref{thm.ECTFF existence}.(ii) yields a real $\ECTFF(2R,4,R)$ for any positive integer $R$.
(For a more sophisticated use of this last idea, see the proof of Theorem~8 of~\cite{Wilson72}.)

To conclude, it thus suffices to prove a real $\EITFF(2R,4,R)$ only exists if $R$ is even.
Here, let $\set{\calU_n}_{n=1}^{4}$ be an $\EITFF(2R,4,R)$ for $\bbR^{2R}$.
Without loss of generality, the synthesis operator of the concatenation of the $R$-vector ONBs for its four subspaces is a $2R\times 4R$ matrix of the form
\begin{equation*}
\bfPhi
=\left[\begin{array}{cccc}
\bfPhi_1&\bfPhi_2&\bfPhi_3&\bfPhi_4
\end{array}\right]
=\left[\begin{array}{cccc}
\bfPhi_{1,1}&\bfPhi_{1,2}&\bfPhi_{1,3}&\bfPhi_{1,4}\\
\bfPhi_{2,1}&\bfPhi_{2,2}&\bfPhi_{2,3}&\bfPhi_{3,4}
\end{array}\right]
\end{equation*}
where each $\bfPhi_n=[\begin{smallmatrix}\bfPhi_{1,n}\\\bfPhi_{2,n}\end{smallmatrix}]$ is a $2R\times R$ matrix whose columns form an orthonormal basis for $\calU_n$,
and each $\bfPhi_{m,n}$ is an $R\times R$ matrix.
Since $\bfPhi_1^*\bfPhi_1^{}=\bfI$,
we may apply a $2R\times 2R$ orthogonal matrix on the left of $\bfPhi$ to assume without loss of generality that $\bfPhi_{1,1}=\bfI$ and $\bfPhi_{1,2}=\bfzero$.
Moreover, since $\set{\calU_n}_{n=1}^{N}$ is an $\EITFF(2R,4,R)$,
every cross-Gram matrix $\bfPhi_{n_1}^*\bfPhi_{n_2}^{}$ is an orthogonal $R\times R$ matrix scaled by a factor of
\smash{$[\tfrac{NR-D}{D(N-1)}]^{\frac12}=\frac1{\sqrt{3}}$}.
In particular, for each $n=2,3,4$,
\begin{equation*}
\sqrt{3}\bfPhi_1^*\bfPhi_n^{}
=\sqrt{3}(\bfPhi_{1,1}^*\bfPhi_{1,n}^{}+\bfPhi_{2,2}^*\bfPhi_{2,n}^{})
=\sqrt{3}(\bfI^*\bfPhi_{1,n}^{}+\bfzero^*\bfPhi_{2,n}^{})
=\sqrt{3}\bfPhi_{1,n}
\end{equation*}
is an orthogonal matrix.
For each $n=2,3,4$, this in turn implies that
\begin{equation*}
\bfI
=\bfPhi_n^*\bfPhi_n^{}
=\bfPhi_{1,n}^*\bfPhi_{1,n}^{}+\bfPhi_{2,n}^*\bfPhi_{2,n}^{}
=\tfrac13\bfI+\bfPhi_{2,n}^*\bfPhi_{2,n}^{}
\end{equation*}
and so that $\sqrt{\frac 32}\bfPhi_{2,n}$ an orthogonal matrix.
That is, without loss of generality, $\bfPhi$ is of the form
\begin{equation*}
\bfPhi
=\left[\begin{array}{cccc}
\bfPhi_1&\bfPhi_2&\bfPhi_3&\bfPhi_4
\end{array}\right]
=\left[\begin{array}{crrr}
\bfI&\frac1{\sqrt{3}}\bfU_{1,2}&\frac1{\sqrt{3}}\bfU_{1,3}&\frac1{\sqrt{3}}\bfU_{1,4}\smallskip\\
\bfzero&\sqrt{\frac23}\bfU_{2,2}&\sqrt{\frac23}\bfU_{2,3}&\sqrt{\frac23}\bfU_{2,4}
\end{array}\right]
\end{equation*}
where each $\bfU_{m,n}$ is an orthogonal matrix.
Next, for each $n=2,3,4$, multiplying
$\bfPhi_n$
on the right by $\bfU_{1,n}^*$ corresponds to simply choosing an alternative ONB for $\calU_n$.
Thus, without loss of generality,
\begin{equation*}
\bfPhi
=\left[\begin{array}{cccc}
\bfPhi_1&\bfPhi_2&\bfPhi_3&\bfPhi_4
\end{array}\right]
=\left[\begin{array}{crrr}
\bfI&\frac1{\sqrt{3}}\bfI&\frac1{\sqrt{3}}\bfI&\frac1{\sqrt{3}}\bfI\smallskip\\
\bfzero&\sqrt{\frac23}\bfV_2&\sqrt{\frac23}\bfV_3&\sqrt{\frac23}\bfV_4
\end{array}\right]
\end{equation*}
where $\bfV_2$, $\bfV_3$ and $\bfV_4$ are orthogonal matrices.
Continuing, multiplying $\bfPhi$ on the left by the orthogonal matrix $[\begin{smallmatrix}\bfI&\bfzero\\\bfzero&\bfV_2^*\end{smallmatrix}]$,
we may further assume without loss of generality that
\begin{equation*}
\bfPhi
=\left[\begin{array}{cccc}
\bfPhi_1&\bfPhi_2&\bfPhi_3&\bfPhi_4
\end{array}\right]
=\left[\begin{array}{crrr}
\bfI&\frac1{\sqrt{3}}\bfI&\frac1{\sqrt{3}}\bfI&\frac1{\sqrt{3}}\bfI\smallskip\\
\bfzero&\sqrt{\frac23}\bfI&\sqrt{\frac23}\bfW_3&\sqrt{\frac23}\bfW_4
\end{array}\right]
\end{equation*}
where $\bfW_3$ and $\bfW_4$ are orthogonal matrices.
Since $\set{\calU_n}$ is a $\TFF(2R,4,R)$ we moreover have that
\begin{equation*}
\left[\begin{array}{cc}
2\bfI&\bfzero\\
\bfzero&2\bfI\end{array}\right]
=\bfPhi\bfPhi^*
=\left[\begin{array}{cc}
2\bfI&\frac{\sqrt{2}}{3}(\bfI+\bfW_2^*+\bfW_3^*)\\
\frac{\sqrt{2}}{3}(\bfI+\bfW_2+\bfW_3)&2\bfI
\end{array}\right]
\end{equation*}
namely that $\bfW_3=-(\bfI+\bfW_2)$.
Thus,
\begin{equation}
\label{eq.EITFF(2R,4,R)}
\bfPhi
=\left[\begin{array}{cccc}
\bfPhi_1&\bfPhi_2&\bfPhi_3&\bfPhi_4
\end{array}\right]
=\left[\begin{array}{crrr}
\bfI&\frac1{\sqrt{3}}\bfI&\frac1{\sqrt{3}}\bfI&\frac1{\sqrt{3}}\bfI\smallskip\\
\bfzero&\sqrt{\frac23}\bfI&\sqrt{\frac23}\bfU&-\sqrt{\frac23}(\bfI+\bfU)
\end{array}\right]
\end{equation}
where $\bfU$ and $-(\bfI+\bfU)$ are orthogonal matrices.
Continuing, since $\set{\calU_n}_{n=1}^{N}$ is equi-isoclinic,
each of the following cross-Gram matrices is necessarily an orthogonal matrix scaled by a factor of $\frac1{\sqrt{3}}$:
\begin{align*}
\bfPhi_2^*\bfPhi_3^{}
&=\tfrac13\bfI+\tfrac 23\bfU
=\tfrac13(\bfI+2\bfU),\\
\bfPhi_2^*\bfPhi_4^{}
&=\tfrac13\bfI-\tfrac23(\bfI+\bfU)
=-\tfrac13\bfI-\tfrac 23\bfU
=-\tfrac13(\bfI+2\bfU),\\
\bfPhi_3^*\bfPhi_4^{}
&=\tfrac13\bfI-\tfrac23\bfU^*(\bfI+\bfU)
=-\tfrac13\bfI-\tfrac23\bfU^*
=-\tfrac13(\bfI+2\bfU)^*.
\end{align*}
In summary, $\bfU$, $-(\bfI+\bfU)$ and \smash{$\frac1{\sqrt{3}}(\bfI+2\bfU)$} are orthogonal matrices.
Any (complex) eigenvalue $\lambda$ of $\bfU$ thus satisfies
\begin{equation*}
1=\abs{\lambda}^2,
\quad
1
=\abs{1+\lambda}^2
=1+2\real(\lambda)+\abs{\lambda}^2,
\quad
1
=\abs{\tfrac1{\sqrt{3}}(2+\lambda)}^2
=\tfrac13[4+4\real(\lambda)+\abs{\lambda}^2],
\end{equation*}
and so is necessarily either $\frac12(-1+\sqrt{3}\rmi)$ or its conjugate.
Letting $M$ denote the multiplicity of $\frac12(-1+\sqrt{3}\rmi)$ as an eigenvalue of $\bfU$, we thus have
\begin{equation*}
\Tr(\bfU)
=M\tfrac12(-1+\sqrt{3}\rmi)+(R-M)\tfrac12(-1-\sqrt{3}\rmi)
=-\tfrac 12R+\tfrac{\sqrt{3}}2(2M-R)\rmi.
\end{equation*}
At the same time, $\bfU$ is a real matrix and so $\Tr(\bfU)\in\bbR$,
implying $R=2M$ is even, as claimed.
\end{proof}

We remark that the above proof technique is a generalization of the naive yet effective method for deriving an $\ETF(2,4)$,
and does not seem to easily generalize to situations where either $N>4$ or $D\neq 2R$.
In fact, to our knowledge, Theorem~\ref{thm.(2R,4,R)} is the only result which disproves the existence of a real $\EITFF(D_0,N,R_0)$ in a situation where $(D_0,N,R_0)$ satisfies both~\eqref{eq.minimal} and~\eqref{eq.Gerzon's bound}, namely where $0<4R_0\leq 2D_0\leq NR_0$ and $N\leq \frac12D_0(D_0+1)$.

\section*{Acknowledgments}
 The authors thank Profs.\ Peter~G.~Casazza and Dustin~G.~Mixon for their thoughtful comments.
The views expressed in this article are those of the authors and do not reflect the official policy or position of the United States Air Force, Department of Defense, or the U.S.~Government.


\begin{thebibliography}{WW}

\bibitem{AbelB07}
R.~J.~R.~Abel, M.~Buratti,
Difference families,
in: C.~J.~Colbourn, J.~H.~Dinitz (Eds.), Handbook of Combinatorial Designs, Second Edition (2007) 392--410.

\bibitem{AbelG07}
R.~J.~R.~Abel, M.~Greig,
BIBDs with small block size,
in: C.~J.~Colbourn, J.~H.~Dinitz (Eds.), Handbook of Combinatorial Designs, Second Edition (2007) 72--79.

\bibitem{BajwaCM12}
W.~U.~Bajwa, R.~Calderbank, D.~G.~Mixon,
Two are better than one: fundamental parameters of frame coherence,
Appl.\ Comput.\ Harmon.\ Anal.\ 33 (2012) 58--78.

\bibitem{BandeiraFMW13}
A.~S.~Bandeira, M.~Fickus, D.~G.~Mixon, P.~Wong,
The road to deterministic matrices with the Restricted Isometry Property,
J.\ Fourier Anal.\ Appl.\ 19 (2013) 1123--1149.

\bibitem{BlokhuisBE18}
A.~Blokhuis, U.~Brehm, B.~Et-Taoui,
Complex conference matrices and equi-isoclinic planes in Euclidean spaces,
Beitr.\ Algebra Geom.\ 59 (2018) 491--500.

\bibitem{CalderbankHRSS99}
A.~R.~Calderbank, R.~H.~Hardin, E.~M.~Rains, P.~W.~Shor, N.~J.~A.~Sloane,
A group-theoretic framework for the construction of packings in {G}rassmannian spaces,
J.\ Algebraic Combin.\ 9 (1999) 129--140.

\bibitem{CalderbankTX15}
R.~Calderbank, A.~Thompson, Y.~Xie,
On block coherence of frames,
Appl.\ Comput.\ Harmon.\ Anal.\ 38 (2015) 50--71.

\bibitem{CasazzaFMWZ11}
P.~G.~Casazza, M.~Fickus, D.~G.~Mixon, Y.~Wang, Z.~Zhou,
Constructing tight fusion frames,
Appl.\ Comput.\ Harmon.\ Anal.\ 30 (2011) 175--187.

\bibitem{CohnKM16}
H.~Cohn, A.~Kumar, G.~Minton,
Optimal simplices and codes in projective spaces,
Geom.\ Topol.\ 20 (2016) 1289--1357.

\bibitem{ConwayHS96}
J.~H.~Conway, R.~H.~Hardin, N.~J.~A.~Sloane,
Packing lines, planes, etc.: packings in Grassmannian spaces,
Exp.\ Math.\ 5 (1996) 139--159.

\bibitem{DhillonHST08}
I.~S.~Dhillon, J.~R.~Heath, T.~Strohmer, J.~A.~Tropp,
Constructing packings in Grassmannian manifolds via alternating projection,
Exp.\ Math.\ 17 (2008) 9--35.

\bibitem{DingF07}
C.~Ding, T.~Feng,
A generic construction of complex codebooks meeting the Welch bound,
IEEE Trans.\ Inform.\ Theory 53 (2007) 4245--4250.

\bibitem{EldarKB10}
Y.~C.~Eldar, P.~Kuppinger, H.~Bölcskei,
Block-sparse signals: uncertainty relations and efficient recovery
IEEE Trans.\ Signal Process.\ 58 (2010) 3042--3054.

\bibitem{EtTaoui18}
B.~Et-Taoui,
Infinite family of equi-isoclinic planes in Euclidean odd dimensional spaces and of complex symmetric conference matrices of odd orders,
Linear Algebra Appl.\  556 (2018) 373--380.

\bibitem{EtTaoui20}
B.~Et-Taoui,
Quaternionic equiangular lines,
Adv.\ Geom.\ 20 (2020) 273--284.

\bibitem{FickusIJK20}
M.~Fickus, J.~W.~Iverson, J.~Jasper, E.~J.~King,
Grassmannian codes from paired difference sets,
arXiv:2010.06639 (2020).

\bibitem{FickusJ19}
M.~Fickus, J.~Jasper,
Equiangular tight frames from group divisible designs,
Des.\ Codes Cryptogr.\ 87 (2019) 1673--1697.

\bibitem{FickusJKM18}
M.~Fickus, J.~Jasper, E.~J.~King, D.~G.~Mixon,
Equiangular tight frames that contain regular simplices,
Linear Algebra Appl.\ 555 (2018) 98--138.

\bibitem{FickusM21}
M.~Fickus, B.~R.~Mayo,
Mutually unbiased equiangular tight frames,
IEEE Trans.\ Inform.\ Theory 67 (2021) 1656--1667.

\bibitem{FickusM16}
M.~Fickus, D.~G.~Mixon, Tables of the existence of equiangular tight frames, arXiv:1504.00253 (2016).

\bibitem{FickusS20}
M.~Fickus, C.~A.~Schmitt,
Harmonic equiangular tight frames comprised of regular simplices,
Linear Algebra Appl.\ 586 (2020) 130--169.

\bibitem{FuchsHS17}
C.~A.~Fuchs, M.~C.~Hoang, B.~C.~Stacey,
The SIC question: history and state of play,
Axioms 6 (2017) 21:1--20.

\bibitem{GodsilR09}
C.~Godsil, A.~Roy,
Equiangular lines, mutually unbiased bases, and spin models,
European J.\ Combin.\ 30 (2009) 246--262.

\bibitem{Hoggar77}
S.~G.~Hoggar,
New sets of equi-isoclinic $n$-planes from old,
Proc.\ Edinb.\ Math.\ Soc.\ 20 (1977) 287--291.

\bibitem{Ionin00}
Y.~J.~Ionin,
A technique for constructing divisible difference sets,
J.\ Geom.\ 67 (2000) 164--172.

\bibitem{IversonKM21}
J.~W.~Iverson, E.~J.~King, D.~G.~Mixon,
A note on tight projective 2-designs,
arXiv:2101.11756 (2021).

\bibitem{King16}
E.~J.~King,
New constructions and characterizations of flat and almost flat {G}rassmannian fusion frames,
arXiv:1612.05784 (2016).

\bibitem{Konig99}
H.~K\"{o}nig,
Cubature formulas on spheres,
Math.\ Res.\ 107 (1999) 201--212.

\bibitem{KutyniokPCL09}
G.~Kutyniok, A.~Pezeshki, R.~Calderbank, T.~Liu,
Robust dimension reduction, fusion frames, and Grassmannian packings,
Appl.\ Comput.\ Harmon.\ Anal.\ 26 (2009) 64--76.

\bibitem{LemmensS73b}
P.~W.~H.~Lemmens, J.~J.~Seidel,
Equi-isoclinic subspaces of Euclidean spaces,
Indag.\ Math.\ 76 (1973) 98--107.

\bibitem{MathonR07}
R.~Mathon, A.~Rosa,
$2$-$(v,k,\lambda)$ designs of small order,
in: C.~J.~Colbourn, J.~H.~Dinitz (Eds.), Handbook of Combinatorial Designs, Second Edition (2007) 25--58.

\bibitem{Pott95}
A.~Pott,
Finite geometry and character theory,
Lecture Notes in Math.\ 1601 (1995).

\bibitem{RenesBSC04}
J.~M.~Renes, R. Blume-Kohout, A.~J.~Scott, C.~M.~Caves,
Symmetric informationally complete quantum measurements,
J.\ Math.\ Phys.\ 45 (2004) 2171--2180.

\bibitem{StrohmerH03}
T.~Strohmer, R.~W.~Heath,
Grassmannian frames with applications to coding and communication,
Appl.\ Comput.\ Harmon.\ Anal.\ 14 (2003) 257--275.

\bibitem{Waldron20}
S.~Waldron,
Tight frames over the quaternions and equiangular lines,
arXiv:2006.06126 (2020).

\bibitem{Welch74}
L.~R.~Welch,
Lower bounds on the maximum cross correlation of signals,
IEEE Trans.\ Inform.\ Theory 20 (1974) 397--399.

\bibitem{Wilson72}
R.~M.~Wilson,
Cyclotomy and difference families in elementary abelian groups,
J.\ Number Theory 4 (1972) 17--47.

\bibitem{XiaZG05}
P.~Xia, S.~Zhou, G.~B.~Giannakis,
Achieving the Welch bound with difference sets,
IEEE Trans.\ Inform.\ Theory 51 (2005) 1900--1907.

\bibitem{Zauner99}
G.~Zauner,
Quantum designs: Foundations of a noncommutative design theory,
Ph.D.\ Thesis, University of Vienna, 1999.

\end{thebibliography}
\end{document}